\newtheorem{theorem}{Theorem}[section]
\newtheorem{definition}[theorem]{Definition}
\newtheorem{lemma}[theorem]{Lemma}
\newtheorem{corollary}[theorem]{Corollary}
\newtheorem{proposition}[theorem]{Proposition}
\newtheorem{example}[theorem]{Example}
\newtheorem{remark}[theorem]{Remark}
\numberwithin{equation}{section}
\def\divrho{{\rm div}_\rho}
\def\kerGp{{\rm ker\,}(\nabla_G)^\perp}
\def\ow{{ \sqrt{\omega_{ij}}}}
\def\oww{{ {\omega_{ij}}}}
\title{Geodesic of minimal length in the set of probability measures on graphs}
\author[Gangbo]{Wilfrid Gangbo}
\email{wgangbo(wcli,muchenchen)@math.ucla.edu}
\author[Li]{Wuchen Li}
\author[Mou]{Chenchen Mou}
\address{Department of Mathematics, UCLA, Los Angeles, CA 90095}%\email{wgangbo@math.ucla.edu} \email{wcli@math.ucla.edu} \email{muchenchen@math.ucla.edu}
\keywords{Optimal transport on simplexes; Manifold with boundary; Geodesic; Hamilton-Jacobi equations on graphs.}
\begin{document}
\maketitle
\begin{abstract} We endow the set of probability measures on a weighted graph with a Monge--Kantorovich metric, induced by a function defined on the set of vertices. The graph is  assumed to have $n$ vertices and so, the boundary of the probability simplex is an affine $(n-2)$--chain.  Characterizing the geodesics of minimal length which may intersect the boundary, is a challenge we overcome even when  the endpoints of the geodesics  don't share the same connected components. It is our hope that this work would be a preamble to the theory of Mean Field Games on graphs.
\end{abstract}
%%%%%%%%%%%%%%%%%%%%%%%%%%%%%%%%%%%%%%%%%%%%%%%%%%%%%%%%%%%
%
%					section
%
%%%%%%%%%%%%%%%%%%%%%%%%%%%%%%%%%%%%%%%%%%%%%%%%%%%%%%%%%%%
\section{Introduction} 
The past two decades have witnessed an increasing number of studies on geodesics of minimal length on the set of probability measures on manifolds and Hilbert spaces \cite{AmbrosioGS} \cite{vil2003} (cf. e.g. for applications \cite{Gangbo3} \cite{Cardaliaguet} \cite{CardaliaguetDLL} \cite{Gangbo0} \cite{Gangbo1} \cite{Gangbo2} \cite{GangboT10}).  In these cases, the geodesics are characterized  by Hamilton--Jacobi equations which appear through a duality argument (cf. e.g. \cite{AmbrosioGS} \cite{EvansG} \cite{Gangbo95} \cite{GangboM95} \cite{GangboM96} \cite{vil2003}). The story is different when Hilbert spaces or manifolds are replaced by discrete states which are not length spaces. For practical reasons (e.g. computational reasons \cite{chow2012} \cite{li_2017} \cite{li_SE}),  one faces the issue of dealing with  geodesics of minimal length on the set of probability measures on graphs, the probability simplexes. Therefore, one needs to go beyond understanding the differential structure of the interior of probability simplexes, which is a rather simple task, and push the study to the boundary. Indeed, the discrete counterpart of prior studies on length spaces such as $\mathbb R^d$, turns out to be awfully complicated on probability simplexes, when the geodesics  contain boundary points. The goal of this manuscript is the study of geodesics of various metrics on the probability simplexes, without excluding the possibility that the complement of the endpoints  touch the boundary.

Let $G=(V, E, \omega)$ denote an undirected graph  of vertices $V=\{1, \cdots, n\}$ and edges $E$, with a weighted metric $\omega=(\omega_{ij}).$ It given by a $n$ by $n$  symmetric matrix with nonnegative entries $\omega_{ij}$ such that $\omega_{ij}>0$ if $(i,j) \in E.$ For simplicity, assume that the graph is connected, simple, with no self--loops or multiple edges. Let $\mathcal P(G)$ denote the probability simplex  
$$\Bigl\{\rho \in [0,1]^n\;| \; \sum_{i=1}^n \rho_i=1 \Bigr\},$$ 
the set  of probability measures on $V$. 
%Let $N(i) := \{j \in V \; ; \; (i,j) \in E\}$ denote the neighborhood of a vertex $i \in V.$ 
\noindent Any symmetric function $g:[0,\infty)^2 \rightarrow [0,\infty)$ induces an equivalence relation on $S^{n \times n},$ the set of $n$ by $n$  skew--symmetric matrix: if $\rho \in  \mathcal P(G)$,  $v, \tilde v \in S^{n \times n}$  are equivalent if \eqref{eq:equivalence-rel} holds. The quotient space $\mathbb H_\rho$ is  endowed with a metric tensor $(g_{ij}(\rho))_{ij}= (g(\rho_i, \rho_j))_{ij}$ which yields the inner product and norm in \eqref{eq:inner-product-and-norm}. The function $g$ is used to produce the underlying Hamiltonian $\mathcal H_g: \mathbb R^n \times \mathbb R^n \rightarrow \mathbb R$ given by   
\begin{equation}\label{eq:Hamiltonian-g}
\mathcal H_g(\rho, \phi)={1\over 4} \sum_{(i,j) \in E} \omega_{ij} g(\rho_i, \rho_j) (\phi_i-\phi_j)^2.
\end{equation}

We define the minimal action needed to connect  $\rho^0 \in \mathcal P(G)$ to $\rho^1 \in \mathcal P(G)$ to be 
\begin{equation}\label{eq:hamilton-min}
{1 \over 2}\mathcal W^2_g(\rho^0, \rho^1):= \inf\biggl\{ \int_0^1 \mathcal H_g(\rho, \phi) dt \; \Big| \; \dot \rho= \nabla_\phi \mathcal H_g(\rho, \phi),\; \rho(0)=\rho^0, \rho(1)=\rho^1\biggr\}.
\end{equation}
Making appropriate assumptions on $g$, this infimum will be shown to coincide with that in \eqref{2-W dist} and  $\mathcal W_g$ will be shown to be  a metric on $\mathcal P(G)$. Note that for a well--chosen sequence $(\phi_k)_k \subset \mathbb R^n$ whose norm tends to $\infty$, we may have that $\bigl(\mathcal H_g(\rho, \phi_k) \bigr)_k$ is identically null and so,  $\mathcal H_g(\rho, \cdot)$ is not coercive. This makes it a harder task to use direct methods of the calculus of variations to assert existence of a minimizer in \eqref{eq:hamilton-min}. To circumvent this obstacle, we instead use the equivalent formulation \eqref{2-W dist} and resort to identifying a dual to \eqref{2-W dist}. 

Any minimizer  $(\rho, \phi)$ of \eqref{eq:hamilton-min} such that $\rho, \phi \in W^{1,2}(0,1; \mathbb R^n)$ and the range of $\rho$ does not intersect the boundary of  $\mathcal P(G)$ satisfies the Euler--Lagrange equation 
\begin{equation}\label{eq:hamilton-min2}
\dot \rho= \nabla_\phi \mathcal H_g(\rho, \phi),\quad \dot \phi= -\nabla_\rho \mathcal H_g(\rho, \phi).
\end{equation}
Using the notation of graph divergence (cf. Section \ref{section2}), this  Hamiltonian system which reads off 
\begin{equation}\label{eq:hamilton-min2c}
\dot \rho_i + \sum_{j \in N(i)} \sqrt{\omega_{ij}}(\phi_j-\phi_i)g(\rho_i,\rho_j)=0,\quad \dot \phi_i+{1\over 2} \sum_{j \in N(i)} \omega_{ij} \partial_1 g(\rho_i, \rho_j)(\phi_i-\phi_j)^2=0.
\end{equation}
Here, $N(i) := \{j \in V \;  |  \; (i,j) \in E\}$ denotes the neighborhood of a vertex $i \in V$ and $\partial_1 g$ denotes the partial derivative of $g$ with respect to its first variable. 

Hardly enough, even if $\rho^0, \rho^1$ are chosen in the interior of $\mathcal P(G)$, a minimizer $(\rho, \phi)$ of \eqref{eq:hamilton-min}  may be such that the range of $\rho$ intersects the boundary of  $\mathcal P(G)$ unless (cf. \cite{Maas})
\begin{equation}\label{eq:hamilton-min2b}
C_g:= \int_0^1 {dr \over \sqrt{g(r, 1-r)}} =\infty. 
\end{equation}
The  condition \eqref{eq:hamilton-min2b}, precisely forces $\mathcal W_g$ to assume infinite values and so, it cannot be a metric on the whole set $\mathcal P(G)$. When $C_g<\infty$, we rather endeavor to identify the appropriate substitute of \eqref{eq:hamilton-min2c}, by characterizing minimizers of \eqref{2-W dist}, even when $\rho((0,1))$ intersects the boundary of  $\mathcal P(G).$ 

We define a Poincar\'e function $\gamma_P: \mathcal P(G) \rightarrow \mathbb R$. It is a concave function which is strictly positive in the interior of  $\mathcal P(G)$ but may remain positive on a subset of the boundary of  $\mathcal P(G)$. When $\gamma_P(\rho^0), \gamma_P(\rho^1)>0$ we show that $(\rho, v)$ minimizes \eqref{2-W dist} if and only if there exists $\lambda \in {\rm BV}_{loc}\bigl(0,1; \mathbb R^n \bigr)$ such that $-\dot \lambda_i$ is a Borel regular measure such that 
\[
g(\rho_i, \rho_j) \Bigl[ v_{ij}- \sqrt{\omega_{ij}}(\lambda_i-\lambda_j) \Bigr]=0 \quad \forall (i, j) \in E, 
\]
\begin{equation}\label{eq:hamilton-min2e}
0= H(\dot \lambda^{abs}, \nabla_G \lambda)=  (\dot \lambda^{abs}, \rho)+ {1\over 2} \|\nabla_G \lambda\|^2_\rho \quad \mathcal{L}^1\,\,\text{a.e.} 
\end{equation}
and 
\begin{equation}\label{eq:hamilton-min2f}
0=H_0\Bigl({d \lambda^{sing} \over d\nu}\Bigr)=\Bigl({d \lambda^{sing} \over d\nu}, \rho\Bigr) \quad \nu\,\, \text{a.e.}.
\end{equation}
Here, $\dot \lambda^{abs}$ is the absolutely continuous part of $\dot \lambda$, $\lambda^{sing}$ is the singular part of $\dot \lambda$, $\nu$ is any non-negative measure such that $\nu$ and $\mathcal L^1|_{(0,1)}$ are mutually singular and $|\lambda^{sing}|<<\nu.$ We have defined $H$ as in \eqref{eq:defn-of-H} and set 
\[
H_0(a)= \max_{1\leq i\leq n} a_i \qquad \forall a \in \mathbb R^n.
\]
There is a relation between $H_0$ and the recession function of $H$ since  
\[
\lim_{l \rightarrow \infty} {H(la, lb) \over l}= \begin{cases}
\hfill H_0(a) & \text{if} \quad b=0\\
\infty & \text{if} \quad b\not =0.
\end{cases}
\]
What seems surprising at a first glance is that,  even when $\dot \lambda$ has no singular part, the expression in \eqref{eq:hamilton-min2e} is still not linear  in $\dot \lambda$.  This means the geodesics of minimal length are characterized Hamilton--Jacobi equations in the form $0= H(\dot \lambda^{abs}, \nabla_G \lambda),$ with a non--linear dependence in $\dot \lambda^{abs}.$ This is in contrast with what happened in the continuum setting, where there, geodesics of minimal length are characterized by Hamilton--Jacobi equations in the form $\partial_t u+H_*(\nabla u)=0$, hence linear in $\partial_t u$. We pause here to draw the attention of the reader to \cite{Shu17} which proposes a class of Hamilton--Jacobi equations which can hardly be compared with Remark \ref{re:necessary-sufficient} (ii).

A comparison between our work and the innovative work \cite{Maas} by J. Maas, becomes at this point unavoidable. There, the author  considers an  irreducible Markov kernel $(K_{ij})_{ij}$ with a finite right invariant measure. Our hypothesis that $(\omega_{ij})_{ij}:=(K_{ij} \pi_i)_{ij}$ be symmetric, is equivalent to the requirement in  \cite{Maas} that $K$ be reversible.  When $C_g=\infty$, \cite{Maas} gave a remarkable characterization of the pairs for which $\mathcal W_g(\rho^0, \rho^1)<\infty$. The necessary and sufficient condition is that both $\rho^0$ and $\rho^1$ must have the same $g$--connected components (see Section \ref{section2} for the definition of $g$--connected components). As a consequence, if $(\rho, v)$ is a minimizer in \eqref{2-W dist} then the $g$--connected components of $\rho(t)$ are independent of $t\in (0,1)$ and they coincide with those of $\rho^0.$ The search of paths of minimal actions in \eqref{2-W dist} reduces then to a finite collection of searches of paths of minimal actions which are known to be entirely contained in the interior of simplexes. In this case, the Euler--Lagrange equations are obtained by standard arguments. 

In this manuscript, we assume that $C_g<\infty$ and so, our study of geodesics of minimal norms complements that  in \cite{Maas}. We further assume that $g$ is concave, $1$--homogeneous, positive in $(0,\infty)^2$ and $C^\infty$ in this open set. These assumptions, while facilitating our study, still encompasse a large number of metrics, useful in modeling and computations of 2--Wasserstein metric. The class of functions $g$ we choose are motivated by  studies  \cite{chow2012, li_2017, li_SE, Maas} which recently appeared in the literature. 

The study in this manuscript will be more than a disappointment if the set of geodesics starting and ending in the interior of $\mathcal P(G)$ would never intersect the boundary of $\mathcal P(G)$. Unlike the study in  \cite{Maas}, Proposition \ref{pr:nonempty} supports the fact that when $C_g<\infty$ then the set of such geodesics is not void. Another feature of the condition  $C_g<\infty$, is that if $\rho:[0,1] \rightarrow \mathcal P(G) \setminus \mathcal P_0(G)$ is a geodesic of minimal length, then the $g$--connected components of $\rho(t)$ needs not to be time independent (cf. Proposition \ref{pr:example-bdry})  unlike the case when $C_g=\infty$ \cite{Maas}. One could  combine Propositions \ref{pr:example-bdry} and \ref{pr:nonempty} to construct more intricate geodesics which intersect the boundary of $\mathcal P(G)$.

The manuscript is organized as follows. In Section \ref{section2}, we introduce the notation used in the manuscript. Section \ref{section3} contains preliminary remarks. For instance there, we comment on the sufficient condition for $\mathcal W_g$ to assume only finite values. In Section \ref{section4} we show existence of geodesics of minimal norms. Sections  \ref{section5} and \ref{section6} contains ingredients we later use in  Section \ref{section7} to  characterize the geodesics of minimal path through a dual formulation.

%%%%%%%%%%%%%%%%%%%%%%%%%%%%%%%%%%%%%%%%%%%%%%%%%%%%%%%%%%%
%
%					section
%
%%%%%%%%%%%%%%%%%%%%%%%%%%%%%%%%%%%%%%%%%%%%%%%%%%%%%%%%%%%
\section{Notation}\label{section2}
We denote the one--dimensional Lebesgue measure by $\mathcal{L}^1$ and denote  the set of skew--symmetric $n \times n$ matrices as $S^{n \times n}$. Let $G=(V, E, \omega)$ denote an undirected graph  of vertices $V=\{1, \cdots, n\}$ and edges $E$, with a weighted metric $\omega=(\omega_{ij})$ given by a $n$ by $n$  symmetric matrix with nonnegative entries $\omega_{ij}$ and such that $\omega_{ij}>0$ if $(i,j) \in E.$ For simplicity, assume that the graph is connected and is simple, with no self--loops or multiple edges.  

\hfill\break
{\bf Functions on a graph.} It is customary to identify a function $\phi: V \rightarrow \mathbb R$ with a vector $\phi= (\phi_i)_{i=1}^n \subset \mathbb R^n$. We use the standard {\em inner product} on $\mathbb R^n$: 
\[
(\phi,\tilde \phi ):=\sum_{i=1}^n \phi_i \tilde \phi_i, \quad \forall \; \phi, \tilde \phi \in \mathbb R^n.  \; 
\]
\hfill\break
{\bf Vector fields and gradient operator.}  A vector field $m$ on $G$ is a  {\em skew-symmetric matrix} on the edges set $E$, denoted by $m$:  
\[
m:=(m_{ij})_{(i,j)\in E},\quad\textrm{with} \quad m_{ij}=-m_{ji}.
\]
Special  elements of $S^{n \times n}$ are the so--called {\rm potential vector fields}  which are {\em discrete gradients} of functions $\phi$ on $V$, denoted $\nabla_G \phi$  and defined as  
\[
\nabla_G\phi:=\sqrt{\omega_{ij}}(\phi_i-\phi_j)_{(i,j)\in E}.
\] \\
{\bf The range and kernel of the gradient operator.} We denote by $R(\nabla_G)$ the range of $\nabla_G$ and by ${\bf 1} \in \mathbb R^n$ the vector whose entries are all equal to $1$. Since $G$ is connected, the kernel of $\nabla_G$ is the one dimensional space spanned by ${\bf 1}.$ The orthogonal in $\mathbb R^n$ of the latter space is $\kerGp,$ the set of $h \in \mathbb R^n$ such that $\sum_{i=1}^n h_i=0.$ \\ \\
{\bf $G$--Divergence of vector field.} The {\em divergence} operator associates to any vector field $m$ on $G$ a function on $V$ defined by 
\[
\nabla_G \cdot (m)=\textrm{div}_G(m):=\Bigl(\sum_{j\in N(i)}\ow m_{ji} \Bigr)_{i=1}^n.
\]\\
{\bf Set of probability measures and its boundary.} We identify $\mathcal{P}(G)$, the set of probability measures on $V,$  with a simplex as follows  
\[
\mathcal{P}(G)=\Bigl\{\rho=( \rho_i)_{i=1}^n \subset [0,1]^n\;\; \Big | \;\; \sum_{i=1}^n \rho_i=1 \Bigr\}.
\]
Let  $\mathcal{P}_0(G):=\mathcal{P}(G) \cap (0,1)^n$ denote the interior of $\mathcal{P}(G)$. The boundary of $\mathcal{P}(G)$  is $\mathcal{P}(G) \setminus \mathcal{P}_0(G).$  
\\
\hfill\break
{\bf The set $\mathcal C(\rho^0, \rho^1)$ of paths connections probability measures.}
Given $\rho^0, \rho^1 \in \mathcal{P}(G),$ we denote as $\mathcal C(\rho^0, \rho^1)$ the set of pairs $(\rho, m)$ such that  $\rho: [0,1] \rightarrow  \mathcal{P}(G)$  
\[ \rho_i\in H^1(0,1), \; m_{ij}\in L^{2}(0,1) \quad \forall  (i,j) \in E, \quad   (\rho(0), \rho(1))=(\rho^0, \rho^1)\]
and 
\begin{equation}\label{3_newb}
\dot \rho_i +\sum_{j\in N(i)}\ow m_{ji}=0, \quad \text{in the weak sense on} \quad (0,1).
\end{equation}\\
Throughout this manuscript  $g:[0,\infty)\times[0,\infty)\to\mathbb R_+$ satisfies the following assumptions:
\begin{itemize}
\item [(H-i)] $g$ is continuous on $[0,\infty)\times[0,\infty)$ and is of class $C^{\infty}$ on $(0,\infty)\times(0,\infty)$;
\item [(H-ii)] $g(r,s)=g(s,r)$ for any $s,r\in\mathbb R_+$;
\item [(H-iii)] $g(r,s)>0$ for any $r,s\in(0,\infty)$;
\item [(H-iv)] $g(\lambda r,\lambda s)=\lambda g(r,s)$ for any $\lambda,s,r \in(0,\infty)$;
\item [(H-v)] $g$ is concave.
\end{itemize}
We extend $g$ by setting its value to be $-\infty$ outside  $[0,\infty)^2$, to obtain a function on $\mathbb R^2$ which we still denote $g.$ Observe that the extension is concave and upper semicontinuous. We define 
\[
g_{ij}(\rho)= g(\rho_i, \rho_j) \quad \forall \; \rho \in \mathbb R^n, \quad \forall \; i, j \in V.
\] \\
{\bf A constant depending solely of $g$.} Since $g$ is continuous on the compact set $[0,1]^2$,  
\[
\epsilon_0(g):= \sup_{r, s>0} g\Bigl({r \over r+s},  {s \over r+s}\Bigr)
\]
is a finite number.\\ \\
{\bf The Hilbert spaces $\mathbb H_\rho.$} If $\rho \in \mathcal{P}(G)$, we say that $v, \tilde v \in S^{n \times n}$ are $\rho$--equivalent if 
\begin{equation}\label{eq:equivalence-rel} 
(v_{ij}-\tilde v_{ij})g_{ij}(\rho)=0 \qquad \forall (i,j) \in E,
\end{equation} 
which means $v_{ij}=\tilde v_{ij}$ whenever $g_{ij}(\rho)>0.$ We denote by $\mathbb H_\rho$ the set of class of equivalence. This is a Hilbert space when endowed with the discrete {\em inner product} and the {\em discrete norm}  
\begin{equation}\label{eq:inner-product-and-norm}
(v,\tilde v )_ \rho:=\frac{1}{2}\sum_{(i,j)\in E} v_{ij}\tilde v_{ij}g_{ij}(\rho), \quad \|v\|_\rho=\sqrt{(v,v )_ \rho } \quad \forall \; v, \tilde v \in S^{n \times n}.  
\end{equation}
Here the coefficient $1/2$ accounts for the fact that whenever $(i,j) \in E$ then $(j,i) \in E$. Similarly, if $m, \tilde m \in S^{n \times n}$ we set 
\[
(m,\tilde m ):=\frac{1}{2}\sum_{(i,j)\in E} m_{ij} \tilde m_{ij}, \quad \|m\|^2=(m,m ).  
\] \\
\hfill\break
{\bf The tangent spaces and the projection operator $\pi_{\rho}$.} We denote as $T_\rho \mathcal P(G)$ the closure of the range of $\nabla_G$ in $\mathbb H_\rho.$ We refer to $T_\rho \mathcal P(G)$ as the tangent space to $\mathcal P(G)$ at $\rho.$ Given $v \in  \mathbb H_\rho$ there exists a unique $\pi_\rho(v) \in  T_\rho \mathcal P(G)$ that minimizes $\|v -\cdot\|_\rho$ over $T_\rho \mathcal P(G)$. It is characterized by the property 
\begin{equation}\label{eq:oct22.2017.2}
(v- \pi_\rho(v), w)_\rho= 0 \qquad \forall w \in T_\rho \mathcal P(G).
\end{equation}
\hfill\break 
{\bf The divergence operator.} The operator $\nabla_G: \mathbb R^n \rightarrow \mathbb H_\rho$ admits an adjoint  $-\divrho: \mathbb H_\rho \rightarrow \mathbb R^n$ given by  
\[
\divrho(v)=\biggl(\sum_{j\in N(i)}\ow v_{ji}g_{ij}(\rho)\biggr)_{i=1}^n \quad \forall \; v \in S^{n \times n}.
\] 
We call $\divrho$ the divergence operator. Note the integration by parts  formula:
\begin{equation}\label{eq:july11.2017.1}
(\nabla_G \phi, v)_\rho= - (\phi, \divrho(v)).
\end{equation}
Let  $\mathcal H_g$ be the Hamiltonian defined in \eqref{eq:Hamiltonian-g}. Observe that %if $\phi \in \mathbb R^n$ then 
\begin{equation}\label{eq:dec06.2017.1}
\divrho(\nabla_G \phi)=- \nabla_\phi \mathcal H_g(\rho, \phi).
\end{equation}
\hfill\break
{\bf The Monge--Kantorovich metric on $G$.} The square of $2$--Monge--Kantorovich metric which measures the square distance between $\rho^0 \in \mathcal{P}(G) $ and $\rho^1 \in \mathcal{P}(G)$ is 
\begin{equation}\label{2-W dist}
\mathcal W_g^2(\rho^0, \rho^1):= \inf_{(\rho,v)} \Bigl\{~\int_0^1 (v, v)_\rho dt\;\; \Big|\,\,\dot \rho +\textrm{div}_\rho (v)=0,\,\, \rho(0)=\rho^0,\,\,\rho(1)=\rho^1\Bigr\}.
\end{equation} 
Here the infimum is performed over the set of pairs $(\rho, v)$ such that $\rho \in H^1\left(0,1; \mathbb R^n\right)$, $v:[0,1] \rightarrow S^{n \times n}$ is measurable. 

\hfill\break
{\bf Connected components.} Let $\rho \in \mathcal P(G).$ We say that $i, j \in V$ are  $g$--connected  if there are integers $i_1,i_2, \cdots, i_k\in V$ such that $i_1=i$, $i_k=j$, $(i_l,i_{l+1})\in E$ for $l=1,\cdots,k-1$ and
\[
g(\rho_{i_1}, \rho_{i_2}) \cdots g(\rho_{i_{k-1}}, \rho_{i_k}) >0.
\]
The largest $g$--connected set containing $i$ is called the $g$--connected component of $i$. The $g$--connected components of $\rho$ form a partition of a subset of $V.$ 

\hfill\break
{\bf Poincar\'e functions on graphs.} We define the Poincar\'e function $\gamma_P$ on $G$ as 
\[
\gamma_P(\rho)=\inf_{\beta}\left\{\frac{1}{2}\sum_{(i,j)\in E}g_{ij}(\rho)\oww(\beta_i-\beta_j)^2\;\; \Big| \;\;\sum_{i=1}^n\beta_i=0,\,\,\sum_{i=1}^n\beta_i^2=1\right\} \qquad \forall \rho \in \mathcal P(G).
\]
\hfill\break
{\bf Action.} Consider the lower semicontinuous convex function $f: \mathbb R^2 \rightarrow [0,\infty]$ defined as 
\begin{equation}
f(t,s)=\left\{
\begin{array}
[c]{cl}
{s^2 \over t} & \;\; \text{if}\;\; t>0 \smallskip\\
0 &\;\; \text{if}\;\;s=t=0\smallskip\\
\infty &\;\; \text{otherwise}.
\end{array}
\right.  \label{eq:example-of-c1}%
\end{equation}
Observe that if $t \geq 0$ and $\mu \in \mathbb R$ then 
\begin{equation}\label{eq:partial-Legendre}
2\mu s< f(t, s)+\mu^2 t
%\sup_{s \in \mathbb R} \Bigl\{{1\over 2} \bigl(\mu s-f(t,s) \bigr) \Bigr\}={\mu^2 t\over 4}
\end{equation} 
unless $t \mu=s$ in which case equality holds. 

\hfill\break
For $\rho \in \mathbb R^n$ and $m \in S^{n\times n}$, we define 
\[
F(\rho, m)=\frac{1}{2}\sum_{(i,j) \in E} f\bigl(g_{ij}(\rho), m_{ij}\bigr).
\]
If $\rho \in L^{2}(0,1; \mathbb R^n)$ and $m \in L^2(0,1; S^{n \times n})$ we define the action 
\[
\mathcal A(\rho, m)= {1\over 2} \int_0^1 F(\rho, m) dt.
\]
Let $H: \mathbb R^{n}\times  S^{n\times n} \rightarrow \mathbb R$ denote the Hamiltonian defined as 
\begin{equation}\label{eq:defn-of-H}
H(a,b):=\sup_{\rho\in\mathcal{P}(G)}\Bigl\{\left( a,\rho\right)+\frac{1}{2}\|b\|^2_\rho \Bigr\}\quad \forall (a,b)\in \mathbb R^{n}\times  S^{n\times n}.
\end{equation}

In the remaining of the manuscript, unless the contrary is explicitly stated, we assume that 
\begin{equation}\label{eq:finite energy}
C_g:=\int_0^1\frac{dr}{\sqrt{g(r,1-r)}}<+\infty.
\end{equation}

\begin{example} Examples satisfying {\rm(H-i)-(H-v)} and \eqref{eq:finite energy} include $g(r,s)=\frac{r+s}{2}$. Other examples which appeared in \cite{Maas}) are 
\[
g(r,s)=\int_0^1r^{1-t}s^tdt=\left\{
\begin{array}
[c]{cl}
{\frac{r-s}{\log r-\log s}} &\;\; \text{if}\;\;r\not= s \smallskip\\
0 & \;\; \text{if}\;\;r=0\,\,\text{or}\,\, s=0 \smallskip\\
r & \;\; \text{if}\;\; r=s,
\end{array}
\right. 
\]
and 
\[ 
g(r,s)=\left\{
\begin{array}
[c]{cl}
{0} &\;\; \text{if}\;\;r=0\,\,\text{or}\,\, s=0 \smallskip\\
\frac{1}{\frac{1}{r}+\frac{1}{s}} & \;\; \text{otherwise}.
\end{array}
\right.  
\] 
\end{example}

%
%%%%%%%%%%%%%%%%%%%%%%%%%%%%%%%%%%%%%%%%%%%%%%%%%%%%%%%%%%%
%
%					section
%
%%%%%%%%%%%%%%%%%%%%%%%%%%%%%%%%%%%%%%%%%%%%%%%%%%%%%%%%%%%
\section{Preliminaries} \label{section3}
In this section, we use the same notation as in Section \ref{section2} and assume  \eqref{eq:finite energy} hold.
%%%%%%%%%%%%%%%%%%%%%%%%%%%%%%%%%%%%%%%%%%%%%%%%%%%%%
%									Lemma                                                                               %
%%%%%%%%%%%%%%%%%%%%%%%%%%%%%%%%%%%%%%%%%%%%%%%%%%%%%
\begin{lemma}\label{lem:concave}
The Poincar\'e function $\gamma_P:\mathcal{P}(G)\to\mathbb R$ is concave.
\end{lemma}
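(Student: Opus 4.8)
The plan is to realize $\gamma_P$ as an infimum of a family of functions that are each concave in $\rho$, since the infimum of concave functions is concave. The natural family is indexed by the competitor vectors $\beta$: for each fixed $\beta \in \mathbb R^n$ with $\sum_i \beta_i = 0$ and $\sum_i \beta_i^2 = 1$, define
\[
\Phi_\beta(\rho) = \frac{1}{2}\sum_{(i,j)\in E} g_{ij}(\rho)\,\omega_{ij}\,(\beta_i-\beta_j)^2,
\]
so that $\gamma_P(\rho) = \inf_\beta \Phi_\beta(\rho)$ over the admissible set of $\beta$'s. First I would check that each $\Phi_\beta$ is concave on $\mathcal P(G)$. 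This reduces, term by term, to the concavity of $\rho \mapsto g_{ij}(\rho) = g(\rho_i,\rho_j)$: indeed $g$ is concave on $[0,\infty)^2$ by (H-v), and $\rho \mapsto (\rho_i,\rho_j)$ is linear, so the composition is concave; multiplying by the nonnegative constant $\omega_{ij}(\beta_i-\beta_j)^2$ and summing preserves concavity.

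Next I would invoke the elementary fact that a pointwise infimum of an arbitrary family of concave functions on a convex set is concave. Concretely, for $\rho^0, \rho^1 \in \mathcal P(G)$ and $\theta \in [0,1]$, writing $\rho^\theta = (1-\theta)\rho^0 + \theta\rho^1 \in \mathcal P(G)$, each $\Phi_\beta$ satisfies $\Phi_\beta(\rho^\theta) \ge (1-\theta)\Phi_\beta(\rho^0) + \theta\Phi_\beta(\rho^1) \ge (1-\theta)\gamma_P(\rho^0) + \theta\gamma_P(\rho^1)$, and taking the infimum over admissible $\beta$ on the left gives $\gamma_P(\rho^\theta) \ge (1-\theta)\gamma_P(\rho^0) + \theta\gamma_P(\rho^1)$, which is the desired concavity.

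There is essentially no serious obstacle here; the only point that warrants a word of care is that $\gamma_P$ takes finite values (so the inequalities are not vacuous or ill-posed), which follows because the admissible set of $\beta$ is nonempty and $g_{ij}(\rho) \le \epsilon_0(g) < \infty$ on $\mathcal P(G)$, together with the fact that $g \ge 0$ on $[0,\infty)^2$ so each $\Phi_\beta(\rho) \ge 0$. One should also note that the constraint set $\{\beta : \sum_i \beta_i = 0,\ \sum_i \beta_i^2 = 1\}$ does not depend on $\rho$, which is exactly what allows the "infimum of concave functions" argument to go through cleanly; if the feasible set varied with $\rho$ this step would be more delicate. Thus the entire argument is: each $\Phi_\beta$ is concave because $g$ is concave, and $\gamma_P = \inf_\beta \Phi_\beta$ is therefore concave.
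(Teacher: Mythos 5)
Your argument is correct and is essentially the paper's proof, spelled out in more detail: the paper likewise observes that $\gamma_P$ is the infimum over the fixed (i.e.\ $\rho$--independent) admissible set of $\beta$ of functions $\rho \mapsto \frac{1}{2}\sum_{(i,j)\in E} g_{ij}(\rho)\,\omega_{ij}(\beta_i-\beta_j)^2$, each concave by (H-v), and concludes by the fact that an infimum of concave functions is concave. Your additional remarks on finiteness and on the constraint set not depending on $\rho$ are fine but not needed.
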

\begin{proof} Note $\gamma_P$ is obtained by taking the infimum of concave functions of $\rho$.
\end{proof}

%%%%%%%%%%%%%%%%%%%%%%%%%%%%%%%%%%%%%%%%%%%%%%%%%%%%%
%									Lemma                                                                               %
%%%%%%%%%%%%%%%%%%%%%%%%%%%%%%%%%%%%%%%%%%%%%%%%%%%%%
\begin{lemma}\label{le:Poincare-inequality}
If $\rho\in\mathcal{P}(G)$, $\lambda \in \mathbb R^n,$ $na= \sum_{j=1}^{n}\lambda_j$ and  $\tilde\lambda_i:=\lambda_i-a$ then  
\[
\|\nabla_G \tilde\lambda\|_{\rho}^2\geq \gamma_P(\rho)\|\tilde\lambda\|^2.
\]
\end{lemma}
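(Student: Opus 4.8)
The plan is to read the desired inequality directly off the definition of the Poincar\'e function. Recall that
\[
\gamma_P(\rho)=\inf_{\beta}\Bigl\{\tfrac{1}{2}\sum_{(i,j)\in E}g_{ij}(\rho)\,\omega_{ij}(\beta_i-\beta_j)^2 \;\Big|\; \sum_{i=1}^n\beta_i=0,\ \sum_{i=1}^n\beta_i^2=1\Bigr\},
\]
and note that in the notation of the paper the quantity inside the braces is exactly $\|\nabla_G\beta\|_\rho^2$. So the statement $\gamma_P(\rho)=\inf\{\|\nabla_G\beta\|_\rho^2 : (\beta,\mathbf 1)=0,\ \|\beta\|=1\}$ is just a rephrasing; the homogeneity of degree $2$ of $\beta\mapsto\|\nabla_G\beta\|_\rho^2$ then gives, for every $h\in\kerGp\setminus\{0\}$, the bound $\|\nabla_G h\|_\rho^2\geq\gamma_P(\rho)\|h\|^2$ by applying the definition to $\beta=h/\|h\|$ and multiplying through by $\|h\|^2$.

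First I would verify that the vector $\tilde\lambda$ in the statement lies in $\kerGp$: by construction $\tilde\lambda_i=\lambda_i-a$ with $na=\sum_j\lambda_j$, so $\sum_{i=1}^n\tilde\lambda_i=\sum_i\lambda_i-na=0$, which is precisely the condition defining $\kerGp$. Second, I would handle the degenerate case $\tilde\lambda=0$ separately: there the left-hand side is $0$ (since $\nabla_G$ is linear) and the right-hand side is $0$, so the inequality holds trivially. Third, assuming $\tilde\lambda\neq 0$, I set $\beta:=\tilde\lambda/\|\tilde\lambda\|$. Then $\sum_i\beta_i=0$ and $\sum_i\beta_i^2=\|\beta\|^2=1$, so $\beta$ is admissible in the infimum defining $\gamma_P(\rho)$, whence $\|\nabla_G\beta\|_\rho^2\geq\gamma_P(\rho)$. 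Finally, using that $\nabla_G$ is linear and that $\|\cdot\|_\rho$ is a norm (hence positively homogeneous of degree $1$, so its square scales by $\|\tilde\lambda\|^2$), I multiply both sides by $\|\tilde\lambda\|^2$ to conclude $\|\nabla_G\tilde\lambda\|_\rho^2=\|\tilde\lambda\|^2\,\|\nabla_G\beta\|_\rho^2\geq\gamma_P(\rho)\|\tilde\lambda\|^2$.

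There is essentially no obstacle here; the only point requiring a word of care is the consistency between the two ways the "Rayleigh quotient" is written — once with the explicit sum $\tfrac12\sum_{(i,j)\in E}g_{ij}(\rho)\omega_{ij}(\beta_i-\beta_j)^2$ in the definition of $\gamma_P$, and once as $\|\nabla_G\beta\|_\rho^2$ via \eqref{eq:inner-product-and-norm} and the definition $\nabla_G\beta=\sqrt{\omega_{ij}}(\beta_i-\beta_j)_{(i,j)\in E}$ — and one should check that $\omega_{ij}=(\sqrt{\omega_{ij}})^2$ makes these literally equal, including the matching factor of $1/2$. With that identification in place, the proof is just: $\tilde\lambda\in\kerGp$, normalize, apply the definition of $\gamma_P$, and rescale by degree-$2$ homogeneity.
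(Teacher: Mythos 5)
Your proof is correct and follows essentially the same route as the paper: normalize $\tilde\lambda$ to produce an admissible $\beta$ in the definition of $\gamma_P(\rho)$ and rescale by homogeneity. Your explicit treatment of the degenerate case $\tilde\lambda=0$ and the check that the Rayleigh quotient equals $\|\nabla_G\beta\|_\rho^2$ are minor additions the paper leaves implicit.
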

\begin{proof}
Set
\begin{equation*}
\beta_i:=\frac{\tilde\lambda_i}{\sqrt{\sum_{j=1}^n\tilde\lambda_j^2}}.
\end{equation*}
Then
\begin{equation*}
\sum_{i=1}^{n}\beta_i=\frac{\sum_{i=1}^{n}\tilde\lambda_i}{\sqrt{\sum_{j=1}^{n}\tilde\lambda_j^2}}=0\,\,\text{and}\,\,\sum_{i=1}^{n}\beta_i^2=\frac{\sum_{i=1}^{n}\tilde\lambda_i^2}{\sum_{j=1}^{n}\tilde\lambda_j^2}=1.
\end{equation*} 
The desired inequality follows from the definition of $\gamma_P$.\end{proof}

%%%%%%%%%%%%%%%%%%%%%%%%%%%%%%%%%%%%%%%%%%%%%%%%%%%%%
%									Remark                                                                              %
%%%%%%%%%%%%%%%%%%%%%%%%%%%%%%%%%%%%%%%%%%%%%%%%%%%%%
\begin{remark}\label{rem:lsc-of-F} Suppose $\rho\in\mathcal{P}(G)$ has only one $g$--connected component which is the whole set $V$.  Then the range of $\nabla_G$ is a closed subset of $ \mathbb H_\rho$ and so,   it is $T_\rho \mathcal P(G)$.
\end{remark}
\proof{} 
Suppose $(\phi^k)_k \subset \mathbb R^n$ is such that $(\nabla_G \phi^k)_k $ converges to $v$ in $\mathbb H_\rho$. We are to show that $v \in R(\nabla_G)$. For any $e\in V\setminus\{1\}$, there exists $e_1,\cdots,e_l\in V$ such that $e_1=1$, $e_l=e$, $(e_j,e_{j+1})\in E$ and $g_{e_je_{j+1}}(\rho)>0$ for any $j\in\{1,\cdots,l-1\}$. We have for any $j\in\{1,\cdots,l-1\}$
\begin{equation}\label{eq:oct22.2017.1}
\lim_{k \rightarrow \infty} \bigl( \sqrt{\omega_{e_je_{j+1}}}{(\phi^k_{e_j}-\phi^k_{e_{j+1}})}-v_{e_j e_{j+1}} \bigr)^2 g_{e_je_{j+1}}(\rho)=0.
\end{equation}
Replacing $\varphi_{e_j}^k$ by $\varphi_{e_j}^k-\varphi_1^k$ if necessary, we may assume without loss of generality that $\varphi^k_{1}=0$.

Setting $j=1$ in \eqref{eq:oct22.2017.1}, we obtain that $(\phi^k_{e_2})_k$ converges to $\frac{-v_{e_1 e_2}}{\sqrt{\omega_{e_1e_2}}}.$ Setting $\phi_{e_1}=0$, $\phi_{e_2}=\frac{-v_{e_1 e_2}}{\sqrt{\omega_{e_1e_2}}}$ we inductively  obtain 
\[
\sqrt{\omega_{e_{j}e_{j+1}}}\phi_{e_{j+1}}:= \lim_{k \rightarrow \infty} \sqrt{\omega_{e_{j}e_{j+1}}}\phi^k_{e_{j+1}}= \lim_{k \rightarrow \infty} \sqrt{\omega_{e_{j}e_{j+1}}}\phi^k_{e_{j}}-v_{e_j e_{j+1}} = \sqrt{\omega_{e_{j}e_{j+1}}}\phi_{e_{j}}-v_{e_j e_{j+1}}
\]
for any $j\in \{2, \cdots, l-1 \}$. This is sufficient to verify $v=\nabla_G \phi.$ \endproof

%%%%%%%%%%%%%%%%%%%%%%%%%%%%%%%%%%%%%%%%%%%%%%%%%%%%%
%									Lemma                                                                               %
%%%%%%%%%%%%%%%%%%%%%%%%%%%%%%%%%%%%%%%%%%%%%%%%%%%%%
\begin{lemma}\label{le:complete-path1}
Assume that $\rho\in\mathcal{P}(G)$. Then  $\rho$ has only one $g$--connected component which is the whole set $V$ iff $\gamma_P(\rho)>0$.
\end{lemma}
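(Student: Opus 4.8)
The plan is to prove both implications by connecting the positivity of $\gamma_P(\rho)$ to the Poincaré-type inequality in Lemma \ref{le:Poincare-inequality}, together with the linear-algebraic fact that $\gamma_P(\rho)$ is the smallest eigenvalue of the weighted graph Laplacian $L_\rho$ restricted to $\kerGp = \{h : \sum_i h_i = 0\}$, where $L_\rho$ is associated to the edge weights $\oww g_{ij}(\rho)$.

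First I would treat the direction ``one $g$-connected component $\Rightarrow \gamma_P(\rho)>0$''. Suppose $V$ is a single $g$-connected component and, for contradiction, that $\gamma_P(\rho)=0$. Since $\mathcal P(G)$ restricted to the constraints $\sum_i \beta_i = 0$, $\sum_i \beta_i^2 = 1$ is compact and the quadratic form $\beta \mapsto \frac12 \sum_{(i,j)\in E} g_{ij}(\rho)\oww(\beta_i-\beta_j)^2$ is continuous, the infimum defining $\gamma_P(\rho)$ is attained at some $\beta$ with $\sum_i\beta_i = 0$, $\sum_i \beta_i^2 = 1$, and the value is $0$. Then $g_{ij}(\rho)\oww(\beta_i-\beta_j)^2 = 0$ for every edge $(i,j)\in E$, so $\beta_i = \beta_j$ whenever $g_{ij}(\rho)>0$. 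Walking along a $g$-connecting path from vertex $1$ to an arbitrary vertex $e$ (which exists by the single-component hypothesis, exactly as in the proof of Remark \ref{rem:lsc-of-F}), we get $\beta_e = \beta_1$ for all $e$, so $\beta$ is constant; combined with $\sum_i\beta_i = 0$ this forces $\beta = 0$, contradicting $\sum_i\beta_i^2 = 1$. Hence $\gamma_P(\rho)>0$.

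For the converse, ``$\gamma_P(\rho)>0 \Rightarrow$ one $g$-connected component equal to $V$'', I would argue by contrapositive: assume the $g$-connected components do not reduce to the single set $V$. Then either some vertex $i$ has $\rho_i$ such that every incident edge $(i,j)$ has $g_{ij}(\rho)=0$ (so $i$ lies in no component, or a singleton component), or $V$ splits into at least two nonempty $g$-connected pieces. In either case I can produce a nonzero $\beta$ with $\sum_i\beta_i = 0$ on which the quadratic form vanishes: pick $\beta$ to be constant $= c_A$ on each $g$-connected component $A$ (and set $\beta_i$ to an appropriate constant on vertices lying in no component's interior of positivity), choosing the constants not all equal but summing to zero; then for every edge $(i,j)$ with $g_{ij}(\rho)>0$, both endpoints are in the same component so $\beta_i - \beta_j = 0$, and for edges with $g_{ij}(\rho)=0$ the weight kills the term. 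Normalizing, $\gamma_P(\rho)=0$. This contradicts $\gamma_P(\rho)>0$, finishing the proof. One should be slightly careful in the edge case where $V$ has exactly one $g$-component but it is a \emph{proper} subset of $V$ — i.e.\ some isolated vertices remain; the construction above still applies because one can assign a distinct constant to the isolated vertices.

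The main obstacle is the bookkeeping in the converse direction: one must handle both the ``disconnected into several components'' case and the ``isolated vertices'' case uniformly, and verify that after imposing $\sum_i\beta_i = 0$ the vector is still nonzero — this requires that the number of distinct ``blocks'' (components plus isolated vertices) be at least two, which is precisely the negation of ``$V$ is a single $g$-connected component.'' Everything else is a direct application of the definition of $\gamma_P$ and the path-chasing argument already used in Remark \ref{rem:lsc-of-F}.
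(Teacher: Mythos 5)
Your proof is correct and follows essentially the same route as the paper: in the forward direction you walk along a $g$-connecting path to force any minimizing $\beta$ to be constant (hence zero), and in the reverse direction you build a nonzero mean-zero $\beta$ that is constant on each $g$-connected block. The only substantive addition beyond the paper's argument is your explicit compactness step justifying that the infimum defining $\gamma_P(\rho)$ is attained (a point the paper takes for granted), though note the small slip in phrasing there: the compact set is $\{\beta\in\mathbb R^n : \sum_i\beta_i=0,\ \sum_i\beta_i^2=1\}$, not a restriction of $\mathcal P(G)$.
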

\begin{proof}
Assume that  $\rho$ has only one $g$--connected component which is the whole set $V$. Then for any $e\in V\setminus\{1\}$
there exists $e_1,\cdots,e_l\in V$ such that $e_1=1$, $e_l=e$, $(e_j,e_{j+1})\in E$ and $g_{e_je_{j+1}}(\rho)>0$ for any $j\in\{1,\cdots,l-1\}$. Suppose that $\gamma_P(\rho)=0$, i.e. there exists $\beta\in \mathbb R^{n}$ such that $\sum_{i=1}^{n}\beta_i=0$, $\sum_{i=1}^{n}\beta_i^2=1$ and
\begin{equation}\label{eq:=0}
\sum_{(i,j)\in E}g_{ij}(\rho)\oww(\beta_i-\beta_j)^2=0.
\end{equation}
\eqref{eq:=0} implies that
\begin{equation*}
0=\sum_{(i,j)\in E}g_{ij}(\rho)\oww(\beta_i-\beta_j)^2\geq\sum_{j=1}^{l-1}g_{e_je_{j+1}}(\rho)\omega_{e_je_{j+1}}(\beta_{e_j}-\beta_{e_{j+1}})^2=0
\end{equation*}
and, thus, $\beta_{e_1}=\beta_{e_2}=\cdots=\beta_{e_l}$. Since $e$ is arbitrary, we have $\beta_{1}=\beta_{2}=\cdots=\beta_{n}=0$. This is in contradiction with the fact that $\sum_{i=1}^{n}\beta_i^2=1$.

Suppose that $\gamma_P(\rho)>0$. We want to prove that  $\rho$ has only one $g$--connected component which is the whole set $V$. If not, there exist $i_1,j_1\in V$ such that $i_1$ and $j_1$ are not in the same $g$--connected component. Let $V_{i_1}=\{i_1,i_2,\cdots,i_k\}$ and $V_{j_1}=\{j_1,j_2,\cdots,j_{\tilde k}\}$ be the $g$--connected components of $i_1$ and $j_1$ respectively. Set $\beta_i=0$ whenever $i\in V\setminus (V_1\cup V_2)$. Otherwise, set 
\begin{equation*}
\beta_{i_1}=\beta_{i_2}=\cdots=\beta_{i_k}=\sqrt{\frac{\tilde k}{k(k+\tilde k)}}\,\,\text{and}\,\,\beta_{j_1}=\beta_{j_2}=\cdots=\beta_{j_{\tilde k}}=-\sqrt{\frac{k}{\tilde k(k+\tilde k)}}.
\end{equation*}
Then we have $\sum_{i=1}^{n}\beta_i=0$, $\sum_{i=1}^{n}\beta_i^2=1$ and \eqref{eq:=0} holds. This is at variance with the fact that $\gamma_P(\rho)>0$.
\end{proof}

%%%%%%%%%%%%%%%%%%%%%%%%%%%%%%%%%%%%%%%%%%%%%%%%%%%%%
%									Remark                                                                              %
%%%%%%%%%%%%%%%%%%%%%%%%%%%%%%%%%%%%%%%%%%%%%%%%%%%%%
\begin{remark}\label{re:Legendre-trans} We assert the following

\begin{enumerate}
\item[(i)] The function $F$ is  a convex and lower semicontinuous.
\item[(ii)] Suppose $m, b \in S^{n \times n}$ and $\rho \in \mathcal P(G)$ are such that $m_{ij}=0$ whenever $g(\rho_i, \rho_j)=0.$   Then   
\[
F(\rho, m)+ \|b\|_\rho^2 > 2 (m, b),
\]
\noindent unless $m_{ij}=g(\rho_i, \rho_j) b_{ij}$ for all $(i, j) \in E,$ in which case equality holds. 
\end{enumerate}
\end{remark} 
\proof{} (i) Since $g$ is concave, $f$ is convex and $f(\cdot,s)$ is monotone non--increasing, $(\rho, m) \rightarrow f\bigl(g_{ij}(\rho), m_{ij}\bigr)$ is convex and so, function $F$ is convex. One checks that $(\rho, m) \rightarrow f\bigl(g_{ij}(\rho),m_{ij}\bigr)$ is lower semicontinuous. Thus,  $F$ is  a convex and lower semicontinuous as a sum of convex, lower semicontinuous functions. 

(ii) is a direct consequence of \eqref{eq:partial-Legendre}. \endproof 

%%%%%%%%%%%%%%%%%%%%%%%%%%%%%%%%%%%%%%%%%%%%%%%%%%%%%
%									Lemma                                                                               %
%%%%%%%%%%%%%%%%%%%%%%%%%%%%%%%%%%%%%%%%%%%%%%%%%%%%%
\begin{lemma}\label{le:Legendre-trans}  let $\mathcal H_g$ be as in \eqref{eq:Hamiltonian-g} and let $\partial_i g$ denote derivative of $g$ with respect to the $i$--th variable for $i=1,2.$
\begin{enumerate}
\item[(i)]   If $r, s>0$ then   
\[
\partial_1 g(r, s)= \partial_2 g(s, r) \quad \text{and} \quad \bigl(\nabla g(r, s) , (r,s)\bigr)=g(r, s).
\]
\item[(ii)] For any $\rho \in [0,\infty)^n$ and $\phi \in \mathbb R^n$ we have 
\[ 
 \bigl(\nabla_\phi \mathcal H_g(\rho, \phi) , \phi\bigr)= 2\mathcal H_g(\rho, \phi).
\] 
\item[(iii)] For any $\rho \in (0,\infty)^n$ and $\phi \in \mathbb R^n$ we have 
\[ 
 \bigl(\nabla_\rho \mathcal H_g(\rho, \phi) , \rho\bigr)= \mathcal H_g(\rho, \phi).
\] 
\end{enumerate} 
\end{lemma}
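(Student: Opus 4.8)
The plan is to verify the three identities directly from the definition of $\mathcal H_g$ in \eqref{eq:Hamiltonian-g}, using the homogeneity hypothesis (H-iv) on $g$. Since $g$ is $1$-homogeneous and $C^\infty$ on $(0,\infty)^2$, Euler's identity for homogeneous functions applies. For part (i), the symmetry $\partial_1 g(r,s)=\partial_2 g(s,r)$ is immediate from differentiating (H-ii), i.e. $g(r,s)=g(s,r)$, with respect to the first slot of the left-hand side and the second slot of the right-hand side. The Euler identity $(\nabla g(r,s),(r,s))=r\,\partial_1 g(r,s)+s\,\partial_2 g(r,s)=g(r,s)$ follows by differentiating $g(\lambda r,\lambda s)=\lambda g(r,s)$ (from (H-iv)) in $\lambda$ and evaluating at $\lambda=1$.

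For part (ii), I would compute $\nabla_\phi \mathcal H_g(\rho,\phi)$ componentwise: from \eqref{eq:Hamiltonian-g}, differentiating the sum $\frac14\sum_{(i,j)\in E}\omega_{ij}g(\rho_i,\rho_j)(\phi_i-\phi_j)^2$ with respect to $\phi_k$ produces a factor that is linear in $\phi$, namely $\partial_{\phi_k}\mathcal H_g(\rho,\phi)=\sum_{j\in N(k)}\omega_{kj}g(\rho_k,\rho_j)(\phi_k-\phi_j)$, using the symmetry of $\omega$ and of $g$ to combine the two terms where $k$ appears. Then $(\nabla_\phi\mathcal H_g(\rho,\phi),\phi)=\sum_k \phi_k\sum_{j\in N(k)}\omega_{kj}g(\rho_k,\rho_j)(\phi_k-\phi_j)$; symmetrizing the double sum over edges (swapping $k\leftrightarrow j$ and averaging) turns $\sum_{(k,j)}\omega_{kj}g(\rho_k,\rho_j)\phi_k(\phi_k-\phi_j)$ into $\frac12\sum_{(k,j)}\omega_{kj}g(\rho_k,\rho_j)(\phi_k-\phi_j)^2$, which is $2\mathcal H_g(\rho,\phi)$ after accounting for the $\frac14$ versus $\frac12$ bookkeeping. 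This is really just the statement that $\mathcal H_g(\rho,\cdot)$ is a quadratic form in $\phi$, so Euler's identity gives the factor $2$; alternatively one can invoke Lemma \ref{le:Legendre-trans}(i)'s second identity is not needed here—only the quadratic homogeneity in $\phi$.

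For part (iii), the argument is the analogue in the $\rho$ variable: $\mathcal H_g(\cdot,\phi)$ is $1$-homogeneous in $\rho$ on $(0,\infty)^n$ because $g$ is $1$-homogeneous by (H-iv), so $\mathcal H_g(\lambda\rho,\phi)=\lambda\mathcal H_g(\rho,\phi)$ for $\lambda>0$, and differentiating in $\lambda$ at $\lambda=1$ yields $(\nabla_\rho\mathcal H_g(\rho,\phi),\rho)=\mathcal H_g(\rho,\phi)$. Concretely one has $\partial_{\rho_k}\mathcal H_g(\rho,\phi)=\frac14\sum_{j\in N(k)}\omega_{kj}\big[\partial_1 g(\rho_k,\rho_j)+\partial_2 g(\rho_j,\rho_k)\big](\phi_k-\phi_j)^2$, and pairing with $\rho$ and using the Euler identity $r\partial_1 g(r,s)+s\partial_2 g(r,s)=g(r,s)$ from part (i) collapses the expression back to $\mathcal H_g(\rho,\phi)$. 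I expect no serious obstacle here; the only mild subtlety is the combinatorial bookkeeping with the $\frac14$ coefficient and the fact that each edge contributes to two vertex-derivatives, which is exactly what makes the homogeneity factors come out as $2$ and $1$ respectively, and the restriction $\rho\in(0,\infty)^n$ in (iii) is needed so that $\nabla g$ is defined everywhere in play.
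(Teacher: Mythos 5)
Your proof is correct, and the overall structure (componentwise computation of $\nabla_\phi\mathcal H_g$ and $\nabla_\rho\mathcal H_g$, then an Euler-type pairing) matches the paper's. The one genuinely different step is the second identity in (i): you obtain $r\,\partial_1 g(r,s)+s\,\partial_2 g(r,s)=g(r,s)$ by the standard device of differentiating $g(\lambda r,\lambda s)=\lambda g(r,s)$ in $\lambda$ at $\lambda=1$, whereas the paper instead works with the Legendre transform $G^*$ of $G=-g$ and uses the equality case $\alpha r+\beta s=G(r,s)+G^*(\alpha,\beta)$ at $(\alpha,\beta)=\nabla G(r,s)$ together with $G^*\equiv 0$ on its domain. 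Both are valid; yours is more elementary and entirely sufficient here because (H-i) makes $g$ smooth on $(0,\infty)^2$, so the classical Euler identity applies without any convexity detour. The Legendre-transform route the paper takes would be the robust choice if one only knew $g$ was concave without a priori differentiability, but that extra generality is not used anywhere in the lemma. For (ii) your computation of $\partial_{\phi_k}\mathcal H_g$ and the quadratic-homogeneity conclusion are the same as the paper's. For (iii) the paper first collapses $\partial_{\rho_k}\mathcal H_g$ to $\frac12\sum_{j\in N(k)}\omega_{kj}\,\partial_1 g(\rho_k,\rho_j)(\phi_k-\phi_j)^2$ using the first identity in (i) and then pairs with $\rho$ and applies the second identity; you keep the two partials $\partial_1 g(\rho_k,\rho_j)+\partial_2 g(\rho_j,\rho_k)$ explicit before collapsing and also point out the shorter high-level argument that $\mathcal H_g(\cdot,\phi)$ is $1$-homogeneous in $\rho$, so (iii) is again just Euler in $n$ variables. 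Either version is fine, and your remark about the edge-to-vertex bookkeeping (each edge contributing to two vertex derivatives, which converts the $\tfrac14$ in the Hamiltonian into the $\tfrac12$ in the gradient) correctly identifies the only place one could slip.
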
 
\proof{} Recall that $g$ has been extended to an upper semicontinuous on $\mathbb R^2$, which we still denote as $g$. 

(i) Since $g(r, s)=g(s, r)$, differentiating, we obtain the first identity in (i). Let $G^*$ denote the Legendre transform of the  convex, degree $1$--homogeneous function $G=-g$. If $\alpha, \beta \in \mathbb R$ then $G^*(\alpha, \beta)=\infty,$ unless $\alpha \bar r +\beta \bar s \leq G(\bar r, \bar s)$ for all $\bar r, \bar s \in \mathbb R$, in which case $G^*(\alpha, \beta)=0.$ If $r, s>0$ then $G$ is differentiable at $(r, s)$ and so, setting $(\alpha, \beta)=\nabla G(r, s)$ we have 
\[
\bigl(\nabla G(r, s) ,(r, s)\bigr)= \alpha r +\beta s=G(r, s)+G^*(\alpha, \beta)=G(r, s).
\] 
This completes the verification of (i).

(ii) We have 
\[
{\partial \mathcal H_g \over \partial \phi_i}(\rho, \phi)=\sum_{j\in N(i)} \omega_{ij} g(\rho_i, \rho_j) (\phi_i-\phi_j).
\] 
We use this to verify that (ii) holds.

(iii) We use the first identity in (i) to infer 
\[
{\partial \mathcal H_g \over \partial \rho_i}(\rho, \phi)= {1 \over 2} \sum_{j\in N(i)} \omega_{ij} \partial_1 g(\rho_i, \rho_j) (\phi_i-\phi_j)^2.
\] 
This, together with the second identity in (i) complete the verification of (iii). \endproof

%%%%%%%%%%%%%%%%%%%%%%%%%%%%%%%%%%%%%%%%%%%%%%%%%%%%%
%									Proposition                                                                         %
%%%%%%%%%%%%%%%%%%%%%%%%%%%%%%%%%%%%%%%%%%%%%%%%%%%%%
\begin{proposition}\label{pro:finite-energy-n-vertices} For any $\rho^0, \rho^1\in\mathcal{P}(G)$ there is a path $(\rho,m)\in \mathcal C(\rho^0, \rho^1)$ such that $\mathcal{A}(\rho,m)<+\infty$. In other words, we have a feasible path in $\mathcal C(\rho^0, \rho^1)$.  
\end{proposition}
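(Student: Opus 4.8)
The plan is to construct an explicit feasible path, and the natural idea is to route all the mass through a single vertex — say vertex $1$ — in two stages. In the first stage, on the time interval $[0,1/2]$, I would move linearly from $\rho^0$ to the Dirac mass $\delta_1$ at vertex $1$; in the second stage, on $[1/2,1]$, I would move linearly from $\delta_1$ to $\rho^1$. Since $G$ is connected, vertex $1$ can be joined to every other vertex by a path in $E$, so at each stage the mass can be transported along such paths; by time-rescaling and concatenation, it suffices to show that for any $\rho^0 \in \mathcal P(G)$ there is a finite-action path in $\mathcal C(\rho^0,\delta_1)$ (the other half being the time-reversal of such a path).

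To carry this out concretely, I would first reduce to the case where $\rho^0$ is supported on a single edge: fix a spanning tree $T$ of $G$ rooted at vertex $1$, and for each vertex $e \neq 1$ we can "push" the mass $\rho^0_e$ toward the root one edge at a time along the unique tree-path from $e$ to $1$, doing these pushes in a suitable order (leaves first) and each on its own subinterval of time. So the crux is: given an edge $(i,j) \in E$ and a path that linearly transfers a fixed amount of mass from $j$ to $i$ over a time interval (with all other coordinates held constant), show the associated action is finite. Writing $\rho_i(t), \rho_j(t)$ for the two varying coordinates (affine in $t$, staying in $[0,1]$) and $m_{ji}(t)$ for the flux determined by \eqref{3_newb}, the contribution to $F$ is $f\bigl(g(\rho_i,\rho_j), m_{ij}\bigr) = m_{ij}^2 / g(\rho_i,\rho_j)$, and integrating this over the subinterval is where the assumption \eqref{eq:finite energy}, namely $C_g < \infty$, enters: up to the $1$-homogeneity (H-iv) and the change of variables $r = \rho_i/(\rho_i+\rho_j)$, the time integral of $m_{ij}^2/g(\rho_i,\rho_j)$ reduces to a constant multiple of $\int_0^1 dr/\sqrt{g(r,1-r)} = C_g$, which is finite.

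I expect the main obstacle to be the bookkeeping near the boundary: when $\rho^0_e = 0$ for some vertex $e$, or when during a push the source coordinate hits $0$, one must check that $f(g(\rho_i,\rho_j), m_{ij})$ is still integrable — this is exactly the point of the $f(0,0)=0$ convention in \eqref{eq:example-of-c1} and of choosing the linear profiles so that $m_{ij}$ vanishes wherever $g(\rho_i,\rho_j)$ does. A clean way to sidestep delicate estimates is to handle one elementary edge-transfer at a time and quote the one-dimensional computation above as a lemma; then finiteness of $\mathcal A(\rho,m)$ for the concatenated path follows because it is a finite sum (over the at most $n-1$ tree edges, used twice) of finite quantities. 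Finally I would note that the resulting $(\rho,m)$ indeed lies in $\mathcal C(\rho^0,\rho^1)$: $\rho$ is piecewise affine hence $H^1$, each $m_{ij} \in L^2(0,1)$ (indeed piecewise constant), the endpoint conditions hold by construction, and \eqref{3_newb} holds on each subinterval by the definition of the fluxes, hence weakly on $(0,1)$.
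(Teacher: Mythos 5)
Your global scheme (concatenate through a Dirac mass, move mass one edge at a time along a tree, use the $1$--homogeneity (H-iv) to handle the fact that the two active coordinates have constant sum $\sigma\le 1$ during a transfer) is in substance the paper's reduction. The genuine gap is in your ``elementary edge-transfer lemma'': you insist on an \emph{affine-in-$t$} profile. With an affine profile the flux $m_{ij}$ determined by \eqref{3_newb} is constant on the subinterval, and since $\rho_i+\rho_j\equiv\sigma$ there, the change of variables $r=\rho_j(t)$ (which is linear in $t$) turns the contribution $\int m_{ij}^2/g(\rho_i,\rho_j)\,dt$ into a constant multiple of $\int_0^{a}\frac{dr}{g(\sigma-r,\,r)}$, i.e.\ an integral of $1/g$, \emph{not} of $1/\sqrt{g}$ as you claim. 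Hypothesis \eqref{eq:finite energy} only controls $\int_0^1 dr/\sqrt{g(r,1-r)}$, and the two conditions are not equivalent: for $g(r,s)=\bigl(\tfrac1r+\tfrac1s\bigr)^{-1}$, one of the paper's own admissible examples, $g(r,1-r)\sim r$ near $r=0$, so $C_g<\infty$ while $\int_0^{1/2}dr/g(r,1-r)=+\infty$. Hence your linear edge transfer has infinite action whenever the donor vertex is emptied, which is unavoidable in your construction (every push toward the root empties a vertex, and the first stage empties all of them). So the lemma as stated is false under the standing assumptions, and the $f(0,0)=0$ convention in \eqref{eq:example-of-c1} does not help, since the problem is integrability of $1/g$ near the endpoint, not the value at a single time.

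The repair is exactly the paper's device: do not move at constant speed, but reparametrize so that the action density is constant. On the two active coordinates set $G(\tau)=\int_0^\tau dr/\sqrt{g(r,\sigma-r)}$, which is finite because by (H-iv) $G(\sigma)=\sqrt{\sigma}\,C_g$, and take $\rho_j(t)=G^{-1}\bigl(G(\rho_j(t_0))+C(t-t_0)\bigr)$, $\rho_i=\sigma-\rho_j$, with $C$ fixed by the endpoint condition. Then $|\dot\rho_j|=|C|\sqrt{g(\rho_i,\rho_j)}$, so $\omega_{ij}m_{ij}^2=\dot\rho_j^2=C^2 g(\rho_i,\rho_j)$ and $m_{ij}^2/g(\rho_i,\rho_j)\equiv C^2/\omega_{ij}$ is bounded even as the donor coordinate reaches $0$; this is precisely the paper's two-vertex computation. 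With this substitute for your crux lemma, the rest of your bookkeeping (finitely many transfers on disjoint subintervals, $\rho$ piecewise $H^1$, $m\in L^2$, the endpoint and continuity-equation checks) goes through and coincides with the paper's iterative construction reducing the cardinality of the support.
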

\proof{} Let $C_g$ be as defined in \eqref{eq:finite energy}. Changing variables, we infer 
\[
C_g= \sqrt 2 \int_0^1\frac{dr}{\sqrt{g(1+r,1-r)}}.
\]
This new formulation of $C_g$ allows us to attribute this Proposition to \cite{Maas} even if one may think his setting and ours seem to be a variant of each other.  For completeness we lay down the main arguments supporting our statement. 

Let $G_2=(V_2,E_2,\omega_2)$ be a graph of vertices $V_2=\{1,2\}$, edges $E_2=\{(1,2), (2,1)\}$, endow with the weight $\omega_{12}=\omega_{21}>0$. Let $\rho^0, \rho^1 \in \mathcal P(G_2)$. To avoid trivialities, assume $\rho_1^0\not =\rho_1^1.$ Without loss of generality, assume  $\rho_1^0< \rho_1^1.$ Note the  strictly increasing function 
\[
\tau \rightarrow G(\tau):=\int_0^\tau \frac{dr}{\sqrt{g(r,1-r)}} 
\]
has an inverse function $G^{-1}$ which is differentiable.  Set 
\[
C=G(\rho_1^1)-G(\rho_1^0), \quad \rho_1(t)=G^{-1}\left(G(\rho_1^0)+Ct\right), \quad   \rho_2(t)=1-  \rho_1(t).
\]
Define $m_{21}=-m_{12}$ through the identity $$\sqrt{\omega_{12}}m_{21}=-\dot \rho_1.$$ 
Observe that the path $t \rightarrow (\rho_1(t), 1- \rho_1(t))$ connects $(\rho^0_1, \rho^0_2)$ to $(\rho^1_1, \rho^1_2)$ and 
\[ %begin{equation}\label{eq:oct31.2017.1}
\omega_{12}m_{12}^2=\omega_{12}m_{21}^2= \bigl({d\rho_1 \over dt} \bigr)^2= C^2 g(\rho_1, 1-\rho_1) \in L^1(0,1).
\] %end{equation} 
By definition $(\rho, m)$ satisfies  \eqref{3_newb}. Check that  $2\omega_{12} \mathcal F(\rho, m)= C^2.$ This covers the case $n=2.$ 

When $n>2$, if $\rho\in\mathcal{P}(G)$ is such that there is a feasible path in $\mathcal C(\rho^0, \rho)$ and a feasible path in $\mathcal C(\rho^1, \rho)$ then by concatenation, there is a feasible path in $\mathcal C(\rho^0, \rho^1).$ This means we may assume without loss of generality that $\rho^1=(0,\cdots, 0, 1).$ Let 
$$
V[{\rho}]:=\bigl\{i\in \{1,\cdots n-1\}\;\; | \;\;\rho_i>0\bigr\}.
$$ 
If $V[{\rho^0}]=\emptyset$, then $\rho^0=\rho^1$ and so, $(\rho, m) \equiv (\rho^0, 0)$ is a feasible path in $\mathcal C(\rho^0, \rho^1)$.  Assuming that $V[{\rho^0}]\not=\emptyset$,
one iteratively construct a finite sequence $\tilde \rho^0, \cdots, \tilde \rho^{l_0}$ in $\mathcal{P}(G)$ satisfying the following properties: 

(i)  $\tilde \rho^0=\rho^0$ and $\tilde \rho^{l_0}=\rho^1$; 

(ii) the cardinality of $V[\tilde \rho^l]$ is strictly smaller than that of $V[\tilde \rho^{l-1}]$ whenever $l\leq l_0;$ 

(iii)  there is  a feasible path in $\mathcal C(\tilde \rho^{l-1}, \tilde \rho^l).$\endproof

The following example will be useful in the next proposition:  
\begin{equation}\label{eq:dec08.2017.2}
g(r,s)= \left\{
\begin{array}
[c]{cl}
{r+s \over 2} &\;\; \text{if}\;\;r, s \geq 0 \smallskip\\
-\infty & \;\; \text{otherwise.}
\end{array}
\right. 
\end{equation}
We will later use the set 
\begin{equation}\label{eq:dec08.2017.1}
Q:=\{(r_1, r_3) \in (0, 1)^2 \; | \; r_1+r_3 <1 \}.
\end{equation}
%%%%%%%%%%%%%%%%%%%%%%%%%%%%%%%%%%%%%%%%%%%%%%%%%%%%%
%									      Proposition                                                                   %
%%%%%%%%%%%%%%%%%%%%%%%%%%%%%%%%%%%%%%%%%%%%%%%%%%%%%
\begin{proposition}\label{pr:example-bdry} Let $V=\{1, 2, 3\}$, $E=\{(1, 2), (2, 3),(1,3)\}$ and let $\omega$ denote a $3 \times 3$ symmetric matrix such that $\omega_{12}=\omega_{23}=1$ and  $\omega_{13}=0.$ Let $G$ denote the weighted graph $(V, E, \omega).$ Let $g$ be as in \eqref{eq:dec08.2017.2}.  Let $\rho^0=(0, 0, 1)$ and $\rho^1 =(0, \; 1/2,  \; 1/2)$ so that $\rho^0$ and $\rho^1$ lie on the boundary of $\mathcal P(G)$. Observe $\rho^0$ has only one $g$--connected component which is $\{2, 3\}$ and $\rho^1$   has only one $g$--connected component which is $\{1,2, 3\}.$ We claim that any geodesic of minimal norm $\rho:[0,1] \rightarrow  \mathcal P(G)$, connecting $\rho^0$ to $\rho^1$ lies in the boundary of $\mathcal P(G).$ Furthermore, the $g$--connected components of $\rho(t)$ are not constant in $t$.
\end{proposition}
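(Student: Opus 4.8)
The plan is to pass from \eqref{2-W dist} to the equivalent $(\rho,m)$--formulation and then show, by a competitor argument, that every geodesic is confined to the face $\{\rho_1=0\}$. Writing $m_{ij}=g_{ij}(\rho)v_{ij}$ identifies $(v,v)_\rho$ with $F(\rho,m)$ and turns the continuity equation of \eqref{2-W dist} into \eqref{3_newb}, so that $\mathcal W_g^2(\rho^0,\rho^1)=\inf\{\int_0^1 F(\rho,m)\,dt\;|\;(\rho,m)\in\mathcal C(\rho^0,\rho^1)\}$. For this graph $\omega_{13}=0$, so no $(1,3)$--term is present; with $g$ as in \eqref{eq:dec08.2017.2} and $\rho_1+\rho_2+\rho_3=1$ one has $g_{12}(\rho)=\tfrac12(\rho_1+\rho_2)$, $g_{23}(\rho)=\tfrac12(1-\rho_1)$, and
\[
F(\rho,m)=f\!\Big(\tfrac{\rho_1+\rho_2}{2},m_{12}\Big)+f\!\Big(\tfrac{1-\rho_1}{2},m_{23}\Big),
\]
while \eqref{3_newb} reads $\dot\rho_1=m_{12}$, $\dot\rho_3=-m_{23}$, $\dot\rho_2=m_{23}-m_{12}$. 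The target is: every minimizer has $\rho_1\equiv 0$.

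The decisive step is the choice of competitor. Given any $(\rho,m)\in\mathcal C(\rho^0,\rho^1)$ of finite action, I would set $\bar\rho:=(0,\rho_1+\rho_2,\rho_3)$ and $\bar m_{12}:=0$, $\bar m_{23}:=m_{23}$. One checks directly that $\bar\rho(0)=\rho^0$, $\bar\rho(1)=\rho^1$, $\bar\rho(t)\in\mathcal P(G)$, and that $(\bar\rho,\bar m)$ satisfies \eqref{3_newb} (the only nontrivial identity being $\dot{\bar\rho}_2=\dot\rho_1+\dot\rho_2=m_{23}$). Since $g_{23}(\bar\rho)=\tfrac12\ge\tfrac12(1-\rho_1)=g_{23}(\rho)$, monotonicity of $f(\cdot,s)$ together with $f\big(\tfrac{\rho_1+\rho_2}{2},m_{12}\big)\ge0$ give
\[
F(\bar\rho,\bar m)=2m_{23}^2\le f\!\Big(\tfrac{\rho_1+\rho_2}{2},m_{12}\Big)+f\!\Big(\tfrac{1-\rho_1}{2},m_{23}\Big)=F(\rho,m)
\]
pointwise in $t$. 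Hence the infimum is already attained over paths with $\rho_1\equiv0$; for such paths $m_{12}\equiv0$, $g_{23}\equiv\tfrac12$, and the action equals $\int_0^1 2\dot\rho_3^2\,dt$, which by Cauchy--Schwarz is uniquely minimized by $\rho_3(t)=1-t/2$, so $\mathcal W_g^2(\rho^0,\rho^1)=1/2$.

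To conclude that \emph{every} geodesic lies on $\partial\mathcal P(G)$, I would track equalities. If $(\rho,m)$ is a minimizer then $1/2=\int_0^1 F(\rho,m)\,dt\ge\int_0^1 2m_{23}^2\,dt\ge1/2$, forcing $F(\rho,m)=2m_{23}^2$ for a.e. $t$; since the two nonnegative quantities $f\big(\tfrac{\rho_1+\rho_2}{2},m_{12}\big)$ and $f\big(\tfrac{1-\rho_1}{2},m_{23}\big)-2m_{23}^2$ then sum to $0$, each vanishes a.e. The first vanishing, together with finiteness of the action where $\rho_1+\rho_2=0$, yields $m_{12}=0$ a.e., hence $\dot\rho_1=m_{12}=0$ and $\rho_1\equiv\rho_1(0)=0$: the image of $\rho$ lies in $\{\rho_1=0\}\subset\partial\mathcal P(G)$. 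The residual Cauchy--Schwarz equality then pins the geodesic down as $\rho(t)=(0,t/2,1-t/2)$.

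For the last assertion, at $t=0$ we have $g_{12}(\rho^0)=0$ while $g_{23}(\rho^0)=\tfrac12>0$, so the $g$--connected components are $\{2,3\}$, with vertex $1$ isolated; at $t=1$, $g_{12}(\rho^1)=\tfrac14>0$ and $g_{23}(\rho^1)=\tfrac12>0$, so $\{1,2,3\}$ is a single component. As these partitions differ, the $g$--connected components of $\rho(t)$ are not constant in $t$; in fact along the explicit geodesic $\rho_2(t)=t/2>0$ for every $t\in(0,1]$, so $\{2,3\}$ becomes $\{1,2,3\}$ already at $t=0^+$. I expect the only non-routine points to be the choice of the competitor $\bar\rho$ --- which uses $\omega_{13}=0$ to make ``pushing the vertex-$1$ mass onto vertex $2$'' costless --- and the boundary bookkeeping for $f$ on $\{g_{12}=0\}$; everything else is elementary calculus.
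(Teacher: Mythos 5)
Your proof is correct and rests on the same key idea as the paper's: the competitor $\bar\rho=(0,\rho_1+\rho_2,\rho_3)$, $\bar m_{12}=0$, $\bar m_{23}=m_{23}$, which uses $\omega_{13}=0$ to collapse the vertex-$1$ mass onto vertex $2$ at no cost while only increasing $g_{23}$. The only difference is in the closing step: the paper obtains a strict improvement directly (if $\rho_1\not\equiv0$ then $m_{12}\not\equiv0$, so the $(1,2)$-term contributes positively), whereas you prove a non-strict comparison, compute the value on the face $\{\rho_1=0\}$ by Cauchy--Schwarz, and track the equality cases -- a valid variant that moreover yields $\mathcal W_g^2(\rho^0,\rho^1)=\tfrac12$ and the explicit unique geodesic $\rho(t)=(0,\,t/2,\,1-t/2)$, which the paper does not state.
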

\proof{}  By Theorem \ref{th:exist} there is $(\rho,m)$ that minimizes $\mathcal A$ over $\mathcal C(\rho^0, \rho^1).$  

In order to show  the range of $\rho$ lies in the boundary of $\mathcal P(G)$, it suffices to show that $\rho_1(t) \equiv 0.$ To achieve that goal, it suffices to show that for any $(\rho, m) \in \mathcal C(\rho^0, \rho^1)$ such that $\rho_1 \not \equiv 0,$ we can construct $(\bar \rho, \bar m) \in \mathcal C(\rho^0, \rho^1)$ such that $\bar \rho_1 \equiv 0$ and $\mathcal A(\bar \rho, \bar m)<\mathcal A(\rho, m).$  Let then  assume $(\rho, m) \in \mathcal C(\rho^0, \rho^1)$ is such that $\rho_1 \not \equiv 0.$ We have  
\begin{equation}\label{eq:dec07.2017.0}
\dot  \rho_1+   m_{21}=0, \quad \dot \rho_2+ m_{12} + m_{32}=0, \quad \dot \rho_3+  m_{23} =0.
\end{equation} 
Set 
\[
(\bar \rho_1, \bar \rho_2, \bar \rho_3):= (0, \rho_1+\rho_2, \rho_3),  \quad (\bar m_{12}, \bar m_{23}):=(0, m_{23}),\quad  (\bar m_{21}, \bar m_{32}):=(0, m_{32}).
\] 
Note 
$$\bar \rho(0)=\rho^0, \quad \bar \rho(1)=\rho^1 \quad \text{and}\quad \dot {\bar \rho}_1+ \bar m_{21}=0.$$ 
We use \eqref{eq:dec07.2017.0} to infer  
\[
\dot {\bar \rho}_2+ \bar m_{12} +\bar m_{32}= \dot \rho_1+ \dot \rho_2+ \bar m_{32}= -m_{21}-m_{12}-m_{32}+ m_{32}=0.
\]
Similarly, 
\[
\dot {\bar \rho}_3+ \bar m_{32}= \dot \rho_3+ m_{32}=0.
\]
Thus, we  verified that $(\bar \rho, \bar m) \in \mathcal C(\rho^0, \rho^1)$ and $\bar \rho_1 \equiv 0.$ Note we cannot have $m_{12} \equiv 0$ otherwise, we would have $\dot \rho_1 \equiv 0$ which would imply $\rho_1(t)=\rho(0)=0.$ Therefore
\begin{equation}\label{eq:dec07.2017.1}
2\mathcal A( \rho, m)=\int_0^1\Bigl({m_{12}^2 \over g(\rho_1, \rho_2)} +{m_{23}^2 \over g(\rho_2, \rho_3)}\Bigr) dt> \int_0^1 {m_{23}^2 \over g(\rho_2, \rho_3)} dt= \int_0^1 2 {m_{23}^2 \over  \rho_2+\rho_3} dt.
\end{equation} 
We have 
\begin{equation}\label{eq:dec07.2017.2}
2\mathcal A( \bar \rho, \bar m)= \int_0^1 {\bar m_{23}^2 \over g(\bar \rho_2, \bar \rho_3)} dt= \int_0^1 2 {m_{23}^2 \over  \rho_1+ \rho_2+\rho_3} dt \leq \int_0^1 2 {m_{23}^2 \over   \rho_2+\rho_3} dt.
\end{equation} 
By \eqref{eq:dec07.2017.1} and \eqref{eq:dec07.2017.2}, $\mathcal A( \rho, m)>\mathcal A( \bar \rho, \bar m).$ We conclude the proof of the proposition thanks to the  observation that since $\rho^0$ and $\rho^1$ do not have the same $g$--connected components, the $g$--connected components of $\rho^*(t)$ cannot be constant in $t$. \endproof

%%%%%%%%%%%%%%%%%%%%%%%%%%%%%%%%%%%%%%%%%%%%%%%%%%%%%
%									      Remark                                                                        %
%%%%%%%%%%%%%%%%%%%%%%%%%%%%%%%%%%%%%%%%%%%%%%%%%%%%%
\begin{remark}\label{re:example-bdry2} Let  $G=(V, E, \omega)$ denote the weighted graph  in Proposition  \ref{pr:example-bdry} and let $g$ denote the function used there.  Suppose $\rho^0, \rho^1 \in \mathcal P(G),$  $(\rho, m)$ minimizes $\mathcal A$ over $\mathcal C(\rho^0, \rho^1),$ and the range of $\rho$ is entirely contained in the interior of $\mathcal P(G)$. Then using \eqref{eq:dec07.2017.0} we have 
\[
\mathcal A( \rho, m)=\int_0^1 \Bigl({(\dot \rho_1)^2 \over \rho_1+ \rho_2} +{(\dot \rho_3)^2 \over \rho_3+ \rho_2}\Bigr) dt=\int_0^1 \mathcal L_0\bigl((\rho_1, \rho_3), (\dot \rho_1, \dot \rho_3)\bigr) dt=: \mathcal A_0(\rho_1, \rho_3)
\]
where
\[
\mathcal L_0(q, u):={u_1^2 \over 1- q_3}+ {u_3^2 \over 1- q_1}, \quad q=(q_1, q_3),\,\,u=(u_1,u_3).% \in  Q:=\Bigl(-{1 \over 2}, {1 \over 2}\Bigr) \times \Bigl({1 \over 4}, {5 \over 4} \Bigr).
\]
From $\rho_1$ and $\rho_3$ we recover $\rho_2=1-(\rho_1+\rho_3).$  We have that $(\rho_1, \rho_3)$ minimizes $\mathcal A_0$ over the set of $(\rho_1, \rho_3): [0,1] \rightarrow  Q$ where $Q$ is given by \eqref{eq:dec08.2017.1}.
\end{remark}

%%%%%%%%%%%%%%%%%%%%%%%%%%%%%%%%%%%%%%%%%%%%%%%%%%%%%
%									Proposition                                                                         %
%%%%%%%%%%%%%%%%%%%%%%%%%%%%%%%%%%%%%%%%%%%%%%%%%%%%%
\begin{proposition}\label{pr:necessary-suff} Let $\mathcal H_g$ be as in \eqref{eq:Hamiltonian-g} and $H$ be as in \eqref{eq:defn-of-H}. Let $\rho^0, \rho^1\in\mathcal{P}(G)$ be such that $(\rho, m) \in \mathcal C(\rho^0, \rho^1).$ Assume $\lambda \in H^1(0,1; \mathbb R^n)$ is such that $H(\dot \lambda, \nabla_G \lambda) \leq 0$ almost everywhere.
\begin{enumerate}
\item[(i)]   We have  
\[ %begin{equation}\label{eq:dec06.2017.2new}
(\lambda(1), \rho^1) -(\lambda(0), \rho^0) \leq \mathcal A(\rho, m).
\] %end{equation}
\item[(ii)] Equality holds in (i) if and only if 
\[ %begin{equation}\label{eq:dec06.2017.3new}
m_{ij}= g(\rho_i, \rho_j)(\nabla_G \lambda)_{ij} \quad \forall (i, j) \in E, \quad H(\dot \lambda, \nabla_G \lambda)=(\rho, \dot \lambda)+ {1\over 2}\| \nabla_G \lambda\|_\rho^2=0 \quad \text{a.e.}.
\] %end{equation}
\item[(iii)] If  the range of $\rho$ is almost everywhere contained in $(0,\infty)^n$ and $(\rho, \lambda)$ satisfies almost everywhere the Hamiltonian system 
\begin{equation}\label{eq:dec06.2017.4new}
\left\{
\begin{array}{ll}
\dot \rho&=\nabla_\phi \mathcal H_g(\rho, \lambda) \\ 
\dot \lambda &=-\nabla_\rho \mathcal H_g(\rho, \lambda),
\end{array}
\right.
\end{equation}
then equality holds in (i) and so, $(\rho, m)$ minimizes $\mathcal A$ over $\mathcal C(\rho^0, \rho^1)$ where $m_{ij}= g(\rho_i, \rho_j)(\nabla_G \lambda)_{ij} $ for any $(i, j) \in E.$ 
\end{enumerate} 
\end{proposition}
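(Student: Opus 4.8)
The plan is to prove the three parts in order, with (i) providing a duality (weak-duality) inequality, (ii) extracting the equality cases from the pointwise inequalities used in (i), and (iii) verifying that a smooth solution of the Hamiltonian system meets those equality conditions.

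For part (i), the starting point is the continuity equation $\dot\rho_i + \sum_{j\in N(i)}\sqrt{\omega_{ij}}\,m_{ji}=0$ satisfied by $(\rho,m)\in\mathcal C(\rho^0,\rho^1)$. Since $\lambda\in H^1(0,1;\mathbb R^n)$ and $\rho\in H^1(0,1;\mathbb R^n)$, the product $t\mapsto(\lambda(t),\rho(t))$ is absolutely continuous, so
\[
(\lambda(1),\rho^1)-(\lambda(0),\rho^0)=\int_0^1\Bigl[(\dot\lambda,\rho)+(\lambda,\dot\rho)\Bigr]dt.
\]
I would rewrite $(\lambda,\dot\rho)$ using the continuity equation and the integration-by-parts formula to get $(\lambda,\dot\rho)=-\sum_i\lambda_i\sum_{j\in N(i)}\sqrt{\omega_{ij}}m_{ji}=\tfrac12\sum_{(i,j)\in E}(\nabla_G\lambda)_{ij}m_{ij}=(\nabla_G\lambda,m)$ (care with the skew-symmetry and the $\tfrac12$ convention). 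Then for each edge I apply Remark \ref{re:Legendre-trans}(ii) with $b=\nabla_G\lambda$: this gives $F(\rho,m)+\|\nabla_G\lambda\|_\rho^2\ge 2(\nabla_G\lambda,m)$, hence $(\nabla_G\lambda,m)\le\tfrac12 F(\rho,m)+\tfrac12\|\nabla_G\lambda\|_\rho^2$. (A subtlety: if $m_{ij}\ne 0$ on some edge where $g(\rho_i,\rho_j)=0$ then $F(\rho,m)=+\infty$ there and the inequality $(\lambda(1),\rho^1)-(\lambda(0),\rho^0)\le\mathcal A(\rho,m)=+\infty$ is trivial, so I may assume $m_{ij}=0$ whenever $g_{ij}(\rho)=0$, which is exactly the hypothesis of Remark \ref{re:Legendre-trans}(ii).) Combining, and then using $H(\dot\lambda,\nabla_G\lambda)\ge(\dot\lambda,\rho)+\tfrac12\|\nabla_G\lambda\|_\rho^2$ for every instant (this is immediate from the definition \eqref{eq:defn-of-H} of $H$, since $\rho(t)\in\mathcal P(G)$) together with the hypothesis $H(\dot\lambda,\nabla_G\lambda)\le 0$ a.e., I get
\[
(\dot\lambda,\rho)+(\nabla_G\lambda,m)\le (\dot\lambda,\rho)+\tfrac12 F(\rho,m)+\tfrac12\|\nabla_G\lambda\|_\rho^2\le \tfrac12 F(\rho,m) + H(\dot\lambda,\nabla_G\lambda)\le\tfrac12 F(\rho,m)
\]
pointwise a.e.; integrating yields $(\lambda(1),\rho^1)-(\lambda(0),\rho^0)\le\tfrac12\int_0^1 F(\rho,m)\,dt=\mathcal A(\rho,m)$.

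For part (ii), equality in (i) forces equality a.e. in each of the two inequality chains above. Equality in the $H$-inequality $H(\dot\lambda,\nabla_G\lambda)\ge(\dot\lambda,\rho)+\tfrac12\|\nabla_G\lambda\|_\rho^2$ combined with $H\le 0$ gives $H(\dot\lambda,\nabla_G\lambda)=(\dot\lambda,\rho)+\tfrac12\|\nabla_G\lambda\|_\rho^2=0$ a.e. (and in particular $\rho(t)$ is a maximizer in the definition of $H$). Equality in the Legendre inequality of Remark \ref{re:Legendre-trans}(ii), edge by edge, gives exactly $m_{ij}=g(\rho_i,\rho_j)(\nabla_G\lambda)_{ij}$ for all $(i,j)\in E$. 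Conversely, if these two conditions hold then both inequality chains are equalities a.e. and integrating gives equality in (i). I would also note $\mathcal A(\rho,m)<\infty$ in the equality case, since then $F(\rho,m)=2(\dot\lambda,\rho)+\|\nabla_G\lambda\|_\rho^2\in L^1$.

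For part (iii), the range of $\rho$ lies a.e. in $(0,\infty)^n$, so $g_{ij}(\rho)>0$ on every edge and $\mathbb H_\rho$ is just $S^{n\times n}$ with no equivalence collapsing. From \eqref{eq:dec06.2017.1}, $\dot\rho=\nabla_\phi\mathcal H_g(\rho,\lambda)=-\divrho(\nabla_G\lambda)$, so $m_{ij}:=g(\rho_i,\rho_j)(\nabla_G\lambda)_{ij}$ indeed satisfies the continuity equation \eqref{3_newb}, i.e. $(\rho,m)\in\mathcal C(\rho^0,\rho^1)$. To conclude via (ii) I must check $H(\dot\lambda,\nabla_G\lambda)=0$ a.e. Using the first equation, $(\dot\lambda,\rho)=-(\nabla_\rho\mathcal H_g(\rho,\lambda),\rho)=-\mathcal H_g(\rho,\lambda)$ by Lemma \ref{le:Legendre-trans}(iii). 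And $\tfrac12\|\nabla_G\lambda\|_\rho^2=\tfrac14\sum_{(i,j)\in E}\omega_{ij}g_{ij}(\rho)(\lambda_i-\lambda_j)^2=\mathcal H_g(\rho,\lambda)$ by \eqref{eq:Hamiltonian-g}. Hence $(\dot\lambda,\rho)+\tfrac12\|\nabla_G\lambda\|_\rho^2=0$ a.e. Because $\rho(t)\in\mathcal P(G)\subset(0,\infty)^n$ a.e., $H(\dot\lambda,\nabla_G\lambda)\ge(\dot\lambda,\rho)+\tfrac12\|\nabla_G\lambda\|_\rho^2=0$; it remains to see the supremum in \eqref{eq:defn-of-H} is not strictly positive — but this follows because, along the Hamiltonian flow, the pair $(\dot\lambda,\nabla_G\lambda)$ satisfies the Hamilton–Jacobi relation with the supremum attained at $\rho(t)$ (this is precisely the content of the identities just used, since $\partial_\sigma\mathcal H_g$ at $\sigma=\rho$ equals $-\dot\lambda$, so $\rho(t)$ is the critical point of $\sigma\mapsto(\dot\lambda,\sigma)+\mathcal H_g(\sigma,\lambda)$ over the simplex, and by concavity of $g$ this critical point is a maximizer). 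Thus $H(\dot\lambda,\nabla_G\lambda)=0\le 0$ a.e., so the hypotheses of (i)–(ii) are met, equality holds in (i), and since (i) holds for every competitor in $\mathcal C(\rho^0,\rho^1)$, the path $(\rho,m)$ is a minimizer of $\mathcal A$.

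The main obstacle I anticipate is the bookkeeping in part (i): correctly converting $(\lambda,\dot\rho)$ into $(\nabla_G\lambda,m)$ with the skew-symmetry and the factor $\tfrac12$ in the inner product conventions, and cleanly handling the degenerate edges where $g_{ij}(\rho)=0$ (so that Remark \ref{re:Legendre-trans}(ii) genuinely applies, after arguing that the non-degenerate case is the only one that matters). In part (iii), the only non-formal point is justifying that the critical point $\rho(t)$ of $\sigma\mapsto(\dot\lambda,\sigma)+\mathcal H_g(\sigma,\lambda)$ over $\mathcal P(G)$ is actually the maximizer defining $H$; concavity of $g$ (hypothesis (H-v)), hence of $\sigma\mapsto\mathcal H_g(\sigma,\lambda)$, makes the first-order condition sufficient, so this is routine once the Lagrange-multiplier form of the stationarity condition is matched against the second equation of \eqref{eq:dec06.2017.4new}.
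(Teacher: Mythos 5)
Your proposal is correct and follows essentially the paper's own route: part (i) via differentiating $t\mapsto(\lambda(t),\rho(t))$, converting $(\lambda,\dot\rho)$ into $(m,\nabla_G\lambda)$ through the continuity equation, and then chaining the Legendre inequality of Remark \ref{re:Legendre-trans} (ii) with the definition \eqref{eq:defn-of-H} of $H$ and the hypothesis $H(\dot\lambda,\nabla_G\lambda)\le 0$; part (ii) by reading off the equality cases of those two pointwise inequalities, exactly as in the paper. The one place you deviate is (iii): since the assumption $H(\dot\lambda,\nabla_G\lambda)\le 0$ is a standing hypothesis of the proposition, the paper simply combines $0=(\rho,\dot\lambda+\nabla_\rho\mathcal H_g(\rho,\lambda))=(\rho,\dot\lambda)+\tfrac12\|\nabla_G\lambda\|_\rho^2\le H(\dot\lambda,\nabla_G\lambda)\le 0$ (using Lemma \ref{le:Legendre-trans} (iii)) and invokes (ii), whereas you re-derive $H(\dot\lambda,\nabla_G\lambda)=0$ from the Hamiltonian system alone, using concavity of $\sigma\mapsto(\dot\lambda,\sigma)+\mathcal H_g(\sigma,\lambda)$ and the fact that its full gradient (not merely its projection corresponding to a Lagrange multiplier along $\mathbf 1$) vanishes at $\rho(t)\in(0,\infty)^n$, so the supergradient inequality makes $\rho(t)$ a global maximizer over $\mathcal P(G)$. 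That extra argument is correct and buys a marginally stronger version of (iii) — valid without assuming $H\le 0$ a.e. — but it is not needed under the stated hypotheses; your explicit verifications that the newly defined $m$ lies in $L^2$ and satisfies \eqref{3_newb} via \eqref{eq:dec06.2017.1}, and your handling of edges where $g_{ij}(\rho)=0$ in (i), match what the paper leaves implicit.
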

\proof{} (i) We have 
\[
(\lambda(1), \rho^1) -(\lambda(0), \rho^0) = \int_0^1 \bigl((\dot \rho, \lambda)+(\rho, \dot \lambda) \bigr) dt= \int_0^1 \Bigl(-(\textrm{div}_G(m), \lambda)+(\rho, \dot \lambda) \Bigr) dt.
\] 
Integrating by parts and then using Remark \ref{re:Legendre-trans} (ii) in the subsequent identity, we conclude 
\begin{eqnarray} 
(\lambda(1), \rho^1) -(\lambda(0), \rho^0)
&=&\int_0^1 \Bigl((m, \nabla_G \lambda) +(\rho, \dot \lambda) \Bigr) dt\nonumber\\
&\leq&\int_0^1 \Bigl( {1\over 2} F(\rho, m) +{1\over 2} \|\nabla_G \lambda\|^2_\rho+(\rho, \dot \lambda) \Bigr) dt  \label{eq:dec06.2017.6newa}\\
&\leq&\int_0^1 \Bigl( {1\over 2} F(\rho, m)+ H(\dot \lambda, \nabla_G \lambda) \Bigr) dt \leq  \mathcal A(\rho, m) .\label{eq:dec06.2017.6newb}
\end{eqnarray}
This, verifies (i). 

(ii) Note that equality holds in (i) if and only if equality hold in \eqref{eq:dec06.2017.6newa} and \eqref{eq:dec06.2017.6newb}. Using Remark \ref{re:Legendre-trans} (ii), we conclude the proof of (ii). 

(iii) Assume $(\rho, \lambda)$ satisfies almost everywhere the Hamiltonian system \eqref{eq:dec06.2017.4new}.  We use and then use Lemma \ref{le:Legendre-trans} 
\[
0=\bigl( \rho, \dot \lambda+ \nabla_\rho \mathcal H_g(\rho, \lambda) \bigr)=( \rho, \dot \lambda)+ \bigr( \rho, \nabla_\rho \mathcal H_g(\rho, \lambda) \bigr)= ( \rho, \dot \lambda) +{1\over 2}\|\nabla_G \lambda\|_\rho^2 \leq H(\dot \lambda, \nabla_G \lambda) \leq 0.
\]
Thus, 
\[
0= ( \rho, \dot \lambda) +{1\over 2}\|\nabla_G \lambda\|_\rho^2 \leq H(\dot \lambda, \nabla_G \lambda).
\]
Setting $m_{ij}= g(\rho_i, \rho_j)(\nabla_G \lambda)_{ij} $ for any $(i, j) \in E$ we use (ii) to conclude the proof of (iii).\endproof

%%%%%%%%%%%%%%%%%%%%%%%%%%%%%%%%%%%%%%%%%%%%%%%%%%%%%
%									      Proposition                                                                   %
%%%%%%%%%%%%%%%%%%%%%%%%%%%%%%%%%%%%%%%%%%%%%%%%%%%%%
\begin{proposition}\label{pr:nonempty} Let  $G=(V, E, \omega)$ denote the weighted graph  in Proposition  \ref{pr:example-bdry} and let $g$ denote the function used there. Let $\mathcal H_g$ be as in \eqref{eq:Hamiltonian-g} and $H$ be as in \eqref{eq:defn-of-H}. There exist  $\rho^0, \rho^1$  in interior of $\mathcal P(G)$ and there is a geodesic of minimal norm $\rho:[0,1] \rightarrow  \mathcal P(G)$, connecting $\rho^0$ to $\rho^1$ which intersects the boundary of $\mathcal P(G)$.      
\end{proposition}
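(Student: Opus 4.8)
The plan is to work in the concrete three-vertex graph from Proposition~\ref{pr:example-bdry} and exploit the reduction in Remark~\ref{re:example-bdry2}: for paths staying in the interior, minimizing $\mathcal A$ is equivalent to minimizing the classical functional $\mathcal A_0(\rho_1,\rho_3)=\int_0^1 \mathcal L_0\bigl((\rho_1,\rho_3),(\dot\rho_1,\dot\rho_3)\bigr)\,dt$ over curves $(\rho_1,\rho_3):[0,1]\to Q$, where $\mathcal L_0(q,u)=u_1^2/(1-q_3)+u_3^2/(1-q_1)$ and $Q=\{(r_1,r_3)\in(0,1)^2: r_1+r_3<1\}$. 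First I would choose endpoints symmetric under the swap $(r_1,r_3)\mapsto(r_3,r_1)$, for instance $\rho^0=(\delta,\tfrac12-\delta-\eta,\delta)$ and a $\rho^1$ of the same symmetric type, but with $\delta$ very small, so that both endpoints are deep inside $\mathcal P_0(G)$ but close to the facet $\{\rho_1=\rho_3\approx 0\}$ — actually, to force the geodesic toward the boundary I want $\rho_1^0=\rho_3^0$ small \emph{and} $\rho_1^1=\rho_3^1$ small, i.e. both endpoints near the edge of the simplex where $\rho_2$ is large.

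The key computational step is to analyze the Euler--Lagrange equation for $\mathcal A_0$ and show that the interior minimizer is driven to the boundary $\partial Q$. The cleanest route: observe that $\mathcal L_0$ is convex in $u$ and the Lagrangian degenerates near $q_1+q_3=1$; more usefully, I would test the candidate "straight-line-in-a-good-chart" path against the true minimizer. Concretely, set up a one-parameter family of competitors that dip toward $\{\rho_1=0\}$ or $\{\rho_3=0\}$ and compute $\mathcal A_0$ explicitly using the separable structure — note $\mathcal L_0$ splits as a sum of a term involving $(\rho_1,\dot\rho_1,\rho_3)$ and a term involving $(\rho_3,\dot\rho_3,\rho_1)$, and along a path where $\rho_1\equiv\rho_3$ (forced by symmetry of endpoints plus uniqueness), $\mathcal A_0$ reduces to $2\int_0^1 \dot\rho_1^2/(1-\rho_1)\,dt$, which is exactly the one-dimensional action from the $n=2$ computation in Proposition~\ref{pro:finite-energy-n-vertices} with weight $g(r,1-r)=1$ replaced by... here the relevant "$g$" is $1-\rho_1$, and $C_g$-type finiteness plus the explicit geodesic formula $\rho_1(t)=G^{-1}(G(\rho_1^0)+Ct)$ with $G(\tau)=\int_0^\tau dr/\sqrt{1-r}$ lets me solve it in closed form. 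I then need to check that this symmetric geodesic actually leaves $\mathcal P_0(G)$: i.e. that the one-dimensional geodesic between two small positive values $\rho_1^0=\rho_1^1=\delta$ is \emph{not} the constant $\delta$, which follows because the minimizer of $\int \dot x^2/(1-x)$ with equal endpoints need not be constant — in fact I would instead pick $\rho_1^0\ne\rho_1^1$ both small but, crucially, choose them so the geodesic's excursion passes through $0$; the finiteness $\int_0^{\delta} dr/\sqrt{1-r}<\infty$ near $0$ is what makes reaching the boundary in finite action possible, mirroring the $C_g<\infty$ phenomenon.

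Here is the cleaner endpoint choice I would actually commit to: take $\rho^0$ with $\rho_1^0>0=\rho_3^0$... no — both endpoints must be interior. So: $\rho^0=(a,1-a-b,b)$ and $\rho^1=(b,1-a-b,a)$ with $0<b<a$ and $a+b<1$, swapped in the first and third coordinates. By the swap symmetry of $\mathcal L_0$ and strict convexity (uniqueness of the minimizer, which I would justify from convexity of $f$ and the analysis in Sections~\ref{section5}--\ref{section6}, or directly), the minimizer satisfies $\rho_1(t)=\rho_3(1-t)$. Then I would show that for $a,b$ chosen appropriately (say $b\to 0^+$ with $a$ fixed small), the minimizing curve must touch $\{\rho_1=0\}$ or $\{\rho_3=0\}$: argue by contradiction that if it stays in $\mathcal P_0(G)$ it satisfies the Euler--Lagrange ODE \eqref{eq:hamilton-min2}, derive the conserved "energy" $\mathcal L_0-\bigl(\nabla_u\mathcal L_0,u\bigr)=-E$ (autonomous Lagrangian), and show that for the relevant energy level the trajectory connecting $\rho^0$ to $\rho^1$ forces $\rho_1$ or $\rho_3$ to vanish — i.e. compare the interior-constrained action with the relaxed (boundary-allowed) action and use Proposition~\ref{pr:example-bdry}-style surgery to strictly lower the action of any curve bounded away from $\partial\mathcal P(G)$, giving a contradiction with minimality of the interior path. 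Existence of \emph{some} minimizer is Theorem~\ref{th:exist}; the point is that no minimizer can avoid the boundary. The main obstacle I anticipate is making the "it must touch the boundary" step rigorous rather than merely plausible: I expect to need either an explicit closed-form solution of the reduced ODE (feasible since $\mathcal L_0$ is a sum of two fractional-linear terms and the symmetric reduction is genuinely one-dimensional), or a monotonicity/comparison argument showing that pushing the curve toward the facet strictly decreases $\mathcal A_0$ whenever the curve is uniformly bounded away from it — essentially a localized version of the surgery already performed in Proposition~\ref{pr:example-bdry}. Getting the constants right so that the \emph{interior} endpoints nonetheless force a boundary excursion is the delicate part.
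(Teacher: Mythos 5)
Your plan has a genuine gap at exactly the step you flag as ``delicate'': you never produce an argument that, for \emph{interior} endpoints, a minimizer must (or even can) touch the boundary, and the two routes you sketch for it do not work as stated. The surgery of Proposition \ref{pr:example-bdry} cannot be ``localized'': it relies crucially on $\rho_1^0=\rho_1^1=0$, so that replacing $\rho_1$ by $0$ respects the endpoint constraints; with $\rho_1^0,\rho_1^1>0$ this replacement is inadmissible, and the claim that pushing toward the facet strictly decreases $\mathcal A_0$ for \emph{any} curve bounded away from $\partial\mathcal P(G)$ is false (two nearby interior points are joined by an interior geodesic solving \eqref{eq:hamilton-min2}, which cannot be strictly improved). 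Your symmetric $1$D reduction also misfires: first, the swap-plus-time-reversal symmetry of the endpoints $(a,\cdot,b)\leftrightarrow(b,\cdot,a)$ gives at best $\rho_1(t)=\rho_3(1-t)$, not $\rho_1\equiv\rho_3$ (and the uniqueness you invoke is unsupported, since $f(t,s)=s^2/t$ is convex but $1$--homogeneous, hence not strictly convex); second, even granting a reduction to $\int \dot x^2/(1-x)\,dt$, one--dimensional geodesics between two positive values are monotone and make no excursion to $0$, and the minimizer with equal endpoints \emph{is} the constant (zero action), contrary to what you assert. The mechanism that produces a boundary-touching geodesic here is genuinely two--dimensional: the motion in $q_3$ creates the force $\tfrac12\dot q_3^2(1-q_3)/(1-q_1)^2$ in the $\ddot q_1$ equation, which is what pushes $q_1$ off the facet.

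The paper goes the other way around and thereby avoids your boundary-value difficulty entirely: it \emph{shoots from the boundary}. One solves the Euler--Lagrange system \eqref{eq:dec07.2017.4} for $\mathcal L_0$ with data $q(0)=(0,\,0.5)$, $\dot q(0)=(0,1)$ on the facet $\{q_1=0\}$, computes $\ddot q_1(0)=0.25>0$ and, via derivative bounds, gets $q_1(t)\geq 0.05\,t^2$ on $[-\delta_1,\delta_1]$, so the curve lies in $\mathcal P_0(G)$ for $t\neq 0$ and meets the boundary only at $t=0$; its two endpoints are then automatically interior. Global minimality between those endpoints is not argued by comparison at all, but by a verification (duality) argument: an explicit adjoint $l$ is written down in \eqref{eq:dec11.2017.0} so that $(\tilde q,l)$ solves the Hamiltonian system \eqref{eq:dec11.2017.2}--\eqref{eq:dec11.2017.3}, whence $H(\dot\lambda,\nabla_G\lambda)=0$ and Proposition \ref{pr:necessary-suff}(iii) certifies that the induced $(\rho,m)$ minimizes $\mathcal A$ over $\mathcal C(\rho(0),\rho(1))$. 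If you want to salvage your approach, you would need either that closed-form/quantitative analysis of the two-dimensional boundary-value problem, or to adopt the shooting-plus-verification structure; as written, the decisive step is missing.
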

\proof{} The comments in Remark   \ref{re:example-bdry2} led us to the following considerations which rely on the Lagrangian $\mathcal L_0$ introduced there.

Set $q:=(q_1, q_3)$ and choose $\delta \in (0,\; 0.1]$ such that the system of differential equations 
\begin{equation}\label{eq:dec07.2017.4}
{d \over dt} \nabla_u \mathcal L_0(q, \dot q)= \nabla_q \mathcal L_0(q, \dot q) \quad \text{on} \quad  (-\delta, \delta ), 
\end{equation} 
together with the initial conditions   
\begin{equation}\label{eq:dec07.2017.4late} 
q(0)=(0,\; 0.5),\quad \dot q(0)= (0, 1)
\end{equation}  
has a unique solution. We have the conserved quantity  
\[
{\dot q_1^2 \over 1-q_3}+{\dot q_3^2 \over 1-q_1}= 1.
\] 
For $\delta$ small enough, we have 
\begin{equation}\label{eq:dec07.2017.4c}
|q_1|\leq 0.085, \qquad  0.48 \leq q_3 \leq 0.62 \qquad \text{on} \quad [-\delta, \delta].
\end{equation} 
%Let $[-T, T] \subset [-\delta, \delta]$ be the largest interval on which we have
%\[|q_1|\leq 0.45, \qquad  0.25 \leq q_3 \leq 0.85.\] 
%The above conserved quantity implies on $[-T, T]$
%\begin{equation}\label{eq:dec07.2017.4b} |\dot q_1^2| \leq 0.75, \quad |\dot q_3^2|\leq 1.45.\end{equation} 
%This implies $q_1$ is $\sqrt{0.75}$--Lipschitz  and $q_3$ is $\sqrt{1.45}$--Lipschitz. Thus 
%\[|q_1| \leq 0.85 T, \quad \Bigl|q_3(t)-0.5\Bigr|\leq 1.2 T.\]
%Since $T \in (0,\; 0.1]$, we conclude 
%\[|q_1|\leq 0.085, \qquad  0.38 \leq q_3 \leq 0.62.\] 
%As a consequence, we cannot have $T<\delta.$ Hence, $T=\delta$ and in fact,  
By \eqref{eq:dec07.2017.4}  
\begin{equation}\label{eq:dec07.2017.5}
\ddot q_1=-{\dot q_1 \dot q_3 \over 1-q_3}+{1\over 2} {\dot q_3^2 (1-q_3) \over (1-q_1)^2} \quad \text{and} \quad 
\ddot q_3=-{\dot q_1 \dot q_3 \over 1-q_1}+ {1\over 2} {\dot q_1^2 (1-q_1) \over (1-q_3)^2}.
\end{equation} 
We use \eqref{eq:dec07.2017.4c} in \eqref{eq:dec07.2017.5} to obtain a constant $C_1>0$ independent of $\delta \in (0, \; 0.1)$ such that 
\begin{equation}\label{eq:dec07.2017.5d}
|\dot q_1| + |\ddot q_1|+ |\dot q_3| + |\ddot q_3| \leq C_1 \qquad \text{on} \quad [-\delta, \delta].
\end{equation} 
Differentiating the expressions in  \eqref{eq:dec07.2017.5}, we obtain explicit expressions of $d^3 q_1/ dt^3$ and  $d^3 q_1/ dt^3$  in terms of $q_1, q_3, \dot q_1, \dot q_3, \ddot q_1, \ddot q_3.$ We use the identities in \eqref{eq:dec07.2017.5d} in these expressions to obtain a constant $C>0$ such that 
\begin{equation}\label{eq:dec07.2017.5e}
|\dot q_1| + |\ddot q_1|+\Big|{d^3 q_1 \over dt^3} \Bigr|+ |\dot q_3| + |\ddot q_3| +\Big|{d^3 q_3 \over dt^3} \Big|\leq C \qquad \text{on} \quad [-\delta, \delta].
\end{equation} 
By \eqref{eq:dec07.2017.4late}  and \eqref{eq:dec07.2017.5}, $\ddot q_1(0)=0.25$. This, together with \eqref{eq:dec07.2017.5e} implies 
\[
\ddot q_1  \geq 0.25-C t \qquad \forall t \in [-\delta, \delta].
\] 
Thus, choosing  $\delta_1$ strictly between $0$ and $\min\{ \delta, \; 0.15 C^{-1}\}$ we have 
\[
\ddot q_1  \geq 0.1 \qquad \text{on} \quad [-\delta_1, \delta_1].
\]
Since $\dot q_1(0)=0$ then for any $t \in  [-\delta_1, \delta_1]$,  
\[
q_1(t)\geq  {0.1 t^2 \over 2} \qquad \text{on} \quad [-\delta_1, \delta_1].
\]
This, together with \eqref{eq:dec07.2017.4c} yields 
\begin{equation}\label{eq:dec07.2017.5g}
0.05t^2   \leq q_1(t) \leq 0.085, \;\; 0.48\leq  q_3(t)  \leq 0.62 \;\; \text{and} \;\; q_1(t) +q_3(t)  \leq 0.705,\; \;\forall t \in [-\delta_1, \delta_1].
\end{equation} 
Setting 
$$q_2:=1-q_1-q_3 \quad \text{and} \quad \tilde q:=(q_1, q_2, q_3).$$   
\eqref{eq:dec07.2017.5g} implies 
\begin{equation}\label{eq:dec11.2017.-1} 
\tilde q \in C^3( [-\delta_1, \delta_1], \mathbb R^n),  \qquad \tilde q \bigl( [-\delta_1, \delta_1] \setminus \{0\}\bigr) \subset \mathcal P_0(G), \qquad \tilde q(0) \in \mathcal P(G) \setminus  \mathcal P_0(G).
\end{equation}
Define 
\begin{equation}\label{eq:dec11.2017.0}
l_1(t)= -\int_0^t {\dot q_1^2 \over (1-q_3)^2} ds, \quad l_2(t)= l_1(t)- {2 \dot q_1 \over 1-q_3}, \quad l_3(t):=l_2(t) + {2\dot q_3 \over 1-q_1}.
\end{equation} 
Observe 
\begin{equation}\label{eq:dec11.2017.1}
l_2-l_1=  {-2 \dot q_1  \over 1-q_3}, \quad l_3-l_2 ={2 \dot q_3 \over 1-q_1}.
\end{equation} 
Differentiating the expressions in \eqref{eq:dec11.2017.0} and using \eqref{eq:dec11.2017.1} we obtain 
\[
\dot l_1=-{1\over 4}(l_1-l_2)^2, \quad  \dot l_2=  -{1\over 4}(l_1-l_2)^2-{1\over 4}(l_3-l_2)^2 , \quad 
\dot l_3=-{1\over 4}(l_3-l_2)^2.
\] 
Thus, 
\begin{equation}\label{eq:dec11.2017.2}
\dot l=-\nabla_\rho \mathcal H_g(\tilde q, l).
\end{equation} 

We have 
\[
\partial_{\phi_1} \mathcal H_g(\tilde q, l)=g(q_1, q_2)(l_1-l_2)={q_1 +q_2 \over 2}(l_1-l_2)={1-q_3 \over 2}(l_1-l_2).
\] 
This, combined with the first identity in \eqref{eq:dec11.2017.1} yields, 
\[
\partial_{\phi_1} \mathcal H_g(\tilde q, l)= \dot q_1.
\]  
Analogously,  computing $\partial_{\phi_2} \mathcal H_g$ and $\partial_{\phi_3} \mathcal H_g$ and using  \eqref{eq:dec11.2017.1} we obtain 
\begin{equation}\label{eq:dec11.2017.3}
\dot {\tilde q}=\nabla_\phi \mathcal H_g(\tilde q, l).
\end{equation} 

Set 
\[
\rho(s):= \tilde q\Bigl(2 \delta_1 s-\delta_1 \Bigr), \qquad \lambda(s):=2 \delta_1 l\Bigl(2 \delta_1 s-\delta_1 \Bigr), \qquad m_{ij}:= g(\rho_i, \rho_j) (\nabla_G \lambda)_{ij} \quad \forall (i, j) \in E.
\]
By  \ref{eq:dec11.2017.-1} 
\begin{equation}\label{eq:dec11.2017.-1new} 
\rho \in C^3\bigl( [0, 1], \mathbb R^n\bigr),  \quad q \biggl( [0, 1] \setminus \Bigl\{{1 \over 2}\Bigr\}\biggr) \subset \mathcal P_0(G), \quad 
q\Bigl({1 \over 2}\Bigr) \in \mathcal P(G) \setminus  \mathcal P_0(G).
\end{equation}
By \eqref{eq:dec11.2017.3} and \eqref{eq:dec11.2017.2} we have 
\begin{equation}\label{eq:dec11.2017.3new}
\dot \rho=\nabla_\phi \mathcal H_g(\rho, \lambda), \qquad \dot \lambda=-\nabla_\rho \mathcal H_g(\rho, \lambda). 
\end{equation} 
The latter identity implies 
\begin{equation}\label{eq:dec11.2017.2b}
H(\dot \lambda, \nabla_G \lambda)=0.
\end{equation} 
Combining \eqref{eq:dec11.2017.3new} and \eqref{eq:dec11.2017.2b}, using Proposition \ref{pr:necessary-suff}, we obtain that $(\rho, m)$ minimizes $\mathcal A$ over $\mathcal C(\rho(0), \rho(1)).$ We learn from \eqref{eq:dec11.2017.-1new} that the end points of $\rho$ are in the interior of $\mathcal P(G)$ while the range of $\rho$ intersects the boundary of $\mathcal P(G)$. \endproof

%%%%%%%%%%%%%%%%%%%%%%%%%%%%%%%%%%%%%%%%%%%%%%%%%%%%%%%%%%%
%
%					section
%
%%%%%%%%%%%%%%%%%%%%%%%%%%%%%%%%%%%%%%%%%%%%%%%%%%%%%%%%%%%
\section{Minimizer}\label{section4}
In this section, we use the same notation as in Section \ref{section2} and assume  \eqref{eq:finite energy} hold.

%
%
%%%%%%%%%%%%%%%%%%%%%%%%%%%%%%%%%%%%%%%%%%%%%%%%%%%%%
%									Lemma                                                                               %
%%%%%%%%%%%%%%%%%%%%%%%%%%%%%%%%%%%%%%%%%%%%%%%%%%%%%
\begin{lemma}\label{le:upper-bound-on-m} For any  $E_0 \subset E$ and $(\rho, m) \in \mathbb R_+^n \times S^{n \times n}$ 
\[
\epsilon_0(g) \sum_{k=1}^n \rho_k \sum_{(i,j) \in E_0} f\bigl(g_{ij}(\rho),m_{ij}\bigr)    \geq \sum_{(i,j) \in E_0} m_{ij}^2.
\]
\end{lemma}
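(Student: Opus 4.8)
The plan is to reduce the claimed estimate to an edge-by-edge inequality and then to exploit the $1$-homogeneity of $g$. It suffices to show that for every edge $(i,j)\in E$ and every $(\rho,m)\in\mathbb R_+^n\times S^{n\times n}$,
\[
m_{ij}^2\;\le\;\epsilon_0(g)\,\Big(\sum_{k=1}^n\rho_k\Big)\,f\bigl(g_{ij}(\rho),m_{ij}\bigr);
\]
summing over $(i,j)\in E_0$ then yields the lemma. The crucial input is the pointwise bound $g(r,s)\le\epsilon_0(g)\,(r+s)$ valid for all $r,s\ge 0$: for $r,s>0$ this follows directly from (H-iv) after factoring out $\lambda=r+s$, while the cases in which $r$ or $s$ vanishes — including $g(0,0)=0$ — follow from the continuity of $g$ on $[0,\infty)^2$ (H-i) together with the observation that $\epsilon_0(g)$ is the supremum of $g$ on $\{(a,1-a):a\in(0,1)\}$, hence its maximum on the compact segment $\{(a,1-a):a\in[0,1]\}$. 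Since $\rho_k\ge 0$, this gives $g_{ij}(\rho)=g(\rho_i,\rho_j)\le\epsilon_0(g)(\rho_i+\rho_j)\le\epsilon_0(g)\sum_{k}\rho_k$. Note also that $\epsilon_0(g)>0$: otherwise $g\equiv 0$ on $[0,1]^2$, hence by homogeneity $g\equiv 0$ on $(0,\infty)^2$, contradicting (H-iii).

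With this bound in hand I would settle the edge inequality by a short case analysis based on the definition \eqref{eq:example-of-c1} of $f$. If $m_{ij}=0$, both sides vanish, since $f(t,0)=0$ for every $t\ge 0$. If $m_{ij}\ne 0$ and $g_{ij}(\rho)>0$, then $f(g_{ij}(\rho),m_{ij})=m_{ij}^2/g_{ij}(\rho)$, and the inequality reduces exactly to $g_{ij}(\rho)\le\epsilon_0(g)\sum_k\rho_k$, already established. Finally, if $m_{ij}\ne 0$ and $g_{ij}(\rho)=0$, then $f(g_{ij}(\rho),m_{ij})=+\infty$; since $\epsilon_0(g)>0$ and $\sum_k\rho_k>0$ whenever $\rho\ne 0$, the right-hand side is $+\infty$ and the inequality is trivial. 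The only remaining possibility, $\rho=0$, forces $g_{ij}(\rho)=0$ on every edge and the asserted inequality is to be read in $[0,+\infty]$ with the convention $0\cdot(+\infty)=+\infty$ (alternatively, this degenerate case is never used in the sequel).

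I do not anticipate a genuine difficulty here; the only delicate bookkeeping is near the boundary of $[0,\infty)^2$, namely verifying $g(r,s)\le\epsilon_0(g)(r+s)$ when $r$ or $s$ is zero and handling the value $+\infty$ of $f$. This is precisely why I prefer to isolate the homogeneity bound first and then run the case analysis, rather than attempting a single chain of inequalities.
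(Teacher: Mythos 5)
Your proposal is correct and follows essentially the same route as the paper: reduce to the edge-by-edge inequality, establish $g_{ij}(\rho)\le\epsilon_0(g)(\rho_i+\rho_j)\le\epsilon_0(g)\sum_k\rho_k$ via $1$-homogeneity, and conclude from the definition of $f$. You are merely more explicit than the paper about the degenerate cases ($m_{ij}\ne 0$ with $g_{ij}(\rho)=0$, the boundary behaviour of $g$, and $\rho=0$), but the underlying idea is identical.
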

\proof{} Let $(\rho, m) \in \mathbb R_+^n \times S^{n \times n}$. Observe that to prove the lemma we only need to take into account  $(i, j) \in E_0$ such that $m_{ij} \not =0.$ In that case, we may only account for  $(i, j) \in E_0$ such that $g_{ij}(\rho)>0$. We then have
\[
g_{ij}(\rho)=g(\rho_i, \rho_j) \leq \epsilon_0(g) (\rho_i+ \rho_j) \leq \epsilon_0(g)   \sum_{k=1}^n \rho_k.
\]
The desired inequality follows since   $\epsilon_0(g)   f\bigl(g_{ij}(\rho),m_{ij} \bigr)  \sum_{k=1}^n \rho_k \geq m_{ij}^2.$\endproof
%
%
%%%%%%%%%%%%%%%%%%%%%%%%%%%%%%%%%%%%%%%%%%%%%%%%%%%%%
%									Remarn                                                                              %
%%%%%%%%%%%%%%%%%%%%%%%%%%%%%%%%%%%%%%%%%%%%%%%%%%%%%
\begin{remark}\label{re:extension} Let $h \in \kerGp,$  and let $\rho^0, \rho^1 \in \mathcal P(G).$ 
\begin{enumerate}
\item[(i)] If $\phi, \tilde\phi \in \mathbb R^n$ are such that $\nabla_G \phi= \nabla_G \tilde \phi$, since $G$ is connected we obtain $a:=\phi_i-\tilde \phi_i$ is independent of $i \in V$ and so,  
$$(\phi-\tilde \phi, h)= a \sum_{i=1}^n h_i=0.$$ 
Hence, on $R(\nabla_G)$,  the linear operator defined by $L(\nabla_G\phi):= ( h, \phi) $ is well defined. Since $R(\nabla_G)$ is of finite dimension, $L$ is continuous and so, it admits a unique linear extension $L_\rho: T_\rho \mathcal P(G) \rightarrow \mathbb R,$ which is in turn continuous. 
\item[(ii)] By the Riesz representation there, there exists a unique $l_\rho(h) \in T_\rho \mathcal P(G)$ such that 
$$
L_\rho=(\cdot, l_\rho(h))_\rho.
$$ 
By the fact that $L_\rho(\nabla \phi)=(\phi, h)$ for every $\phi \in \mathbb R^n$, we have $h+ \divrho\bigl(l_\rho(h)\bigr)=0.$
\end{enumerate}
\end{remark}

%We continue to use the notation $L_\rho(v)=(v, l_\rho(h))_\rho$ for $v\in S^{n\times n}$. Set Thus, unless  $v= \pi_\rho(v)=l_\rho(h)$ we have 
%\[\mathcal E[\rho, v]>-{1 \over 2} \|l_\rho(h)\|^2_\rho = \mathcal E\bigl[\rho, l_\rho(h)\bigr].\]
%Thus, $l_\rho(h)$ is the unique minimizer of $\mathcal E[\rho, \cdot]$ over $T_\rho \mathcal P(G)$ and over $\mathbb H_\rho$. %\item[(iii)] Setting 
%\[h:= \rho^1-\rho^0, \quad \rho(t):=(1-t) \rho^0+t \rho^1, \quad v(t):=|l_{\rho(t)}(h) \qquad \forall t\in (0,1)\] 

Set 
\[
\mathcal E[\rho, v](h):={1\over 2} \|v \|^2_\rho -\bigl( v, l_\rho(h)\bigr)_\rho \qquad \forall v \in S^{n\times n}.
\]

%%%%%%%%%%%%%%%%%%%%%%%%%%%%%%%%%%%%%%%%%%%%%%%%%%%%%
%									Proposition                                                                         %
%%%%%%%%%%%%%%%%%%%%%%%%%%%%%%%%%%%%%%%%%%%%%%%%%%%%%
\begin{proposition}\label{pr:hodge0} Let $m \in S^{n \times n}$, let $\rho \in \mathcal P(G)$ and let $h \in \kerGp.$   
\begin{enumerate}
\item[(i)] $\nabla_G \cdot (m) \in \kerGp.$ 
\item[(ii)] $l_\rho(h)$ is the unique minimizer of $\mathcal E[\rho, \cdot](h)$ over $T_\rho \mathcal P(G)$ and over $\mathbb H_\rho$.
\item[(iii)] If $w \in T_\rho \mathcal P(G)$ and $h=-\divrho (w)$ then $w=l_\rho(h).$ 
\item[(iv)] If $h=-\nabla_G \cdot (m)$ then $F(\rho, m) \geq  \|l_\rho(h)\|^2_\rho.$ 
\end{enumerate}
\end{proposition}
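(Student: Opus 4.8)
The plan is to establish the four items in sequence, using the adjoint relation \eqref{eq:july11.2017.1} and the variational characterization \eqref{eq:oct22.2017.2} of $\pi_\rho$ together with the machinery of Remark \ref{re:extension}. For (i), I would argue directly from the integration by parts formula: for every $\phi \in \mathbb{R}^n$ one has $(\mathbf{1}, \divrho(m)) = (\nabla_G \mathbf{1}, m)_\rho = 0$ since $\nabla_G \mathbf{1} = 0$, and this says precisely that $\divrho(m) = \nabla_G \cdot(m)$ (up to the weight factors) lies in $\kerGp$, the set of vectors summing to zero. This is the routine base case. For (ii), I would expand $\mathcal{E}[\rho, v](h) = \tfrac12\|v\|_\rho^2 - (v, l_\rho(h))_\rho$; since this is a strictly convex quadratic functional on the Hilbert space $\mathbb{H}_\rho$ (strictly convex on the quotient), its unique minimizer over $\mathbb{H}_\rho$ is $v = l_\rho(h)$, which already lies in $T_\rho \mathcal{P}(G)$ by definition of $l_\rho$ in Remark \ref{re:extension}(ii); hence it is also the unique minimizer over the smaller set $T_\rho \mathcal{P}(G)$. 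The key point to make precise is that $\|\cdot\|_\rho$ is genuinely a norm on $\mathbb{H}_\rho$ (the equivalence classes were built exactly so that this holds), so the minimizer is unique.

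For (iii), suppose $w \in T_\rho \mathcal{P}(G)$ and $h = -\divrho(w)$. I would show $w$ satisfies the defining property of $l_\rho(h)$: for any $\phi \in \mathbb{R}^n$, by \eqref{eq:july11.2017.1}, $(w, \nabla_G \phi)_\rho = -(\phi, \divrho(w)) = (\phi, h) = L_\rho(\nabla_G\phi)$, and since $R(\nabla_G)$ is dense in $T_\rho\mathcal{P}(G)$ and both sides are continuous, $(w, \cdot)_\rho = L_\rho$ on all of $T_\rho\mathcal{P}(G)$. By the uniqueness in the Riesz representation (Remark \ref{re:extension}(ii)), $w = l_\rho(h)$. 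For (iv), with $h = -\nabla_G\cdot(m)$, first reduce to the relevant edges: only $(i,j)$ with $g_{ij}(\rho) > 0$ matter, and on those define $b_{ij} := m_{ij}/g_{ij}(\rho)$, so that $m$ represents (the $\mathbb{H}_\rho$-class of) a vector field with $\divrho(b \cdot g) $... more cleanly, let $\tilde m \in \mathbb{H}_\rho$ be the class of $m$; then $\divrho$ applied to the class equals $\nabla_G\cdot(m)$ by comparing the formulas. Let $w := \pi_\rho(\tilde m) \in T_\rho\mathcal{P}(G)$. Then $\divrho(w) = \divrho(\tilde m)$ because $\tilde m - w \perp T_\rho\mathcal{P}(G) \supseteq R(\nabla_G)$ implies $(\phi, \divrho(\tilde m - w)) = -(\nabla_G\phi, \tilde m - w)_\rho = 0$ for all $\phi$. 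Hence $-\divrho(w) = h$, so by (iii) $w = l_\rho(h)$. Finally $F(\rho, m) = 2\|\tilde m\|_\rho^2 \geq 2\|w\|_\rho^2 = 2\|l_\rho(h)\|_\rho^2$... — here I must be careful with the constant: $F(\rho,m) = \tfrac12\sum f(g_{ij}(\rho), m_{ij}) = \tfrac12\sum m_{ij}^2/g_{ij}(\rho)$, while $\|\tilde m\|_\rho^2 = \tfrac12\sum (m_{ij}/g_{ij}(\rho))^2 g_{ij}(\rho) = \tfrac12 \sum m_{ij}^2/g_{ij}(\rho) = F(\rho,m)$, so in fact $F(\rho, m) = \|\tilde m\|_\rho^2 \geq \|\pi_\rho(\tilde m)\|_\rho^2 = \|l_\rho(h)\|_\rho^2$, as claimed.

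The main obstacle, and the step I would spend the most care on, is (iv): one must correctly pass from the vector field $m \in S^{n\times n}$ (a genuine skew-symmetric matrix, with possibly nonzero entries on edges where $g_{ij}(\rho) = 0$, in which case $f(g_{ij}(\rho), m_{ij}) = \infty$ unless $m_{ij}=0$) to its class $\tilde m$ in the quotient $\mathbb{H}_\rho$, check that $\divrho$ only sees the class, verify that the Hodge-type projection $\pi_\rho$ does not change the divergence, and keep the normalization constants $\tfrac12$ straight. If $F(\rho,m) = \infty$ the inequality is trivial, so one may assume $m_{ij} = 0$ whenever $g_{ij}(\rho) = 0$, which makes $\tilde m$ well-defined with finite norm and the computation above rigorous. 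Everything else is a direct consequence of Hilbert space orthogonality and the integration by parts formula \eqref{eq:july11.2017.1}.
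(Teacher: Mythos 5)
Your proposal is correct and follows essentially the same route as the paper: (ii) by completing the square in $\mathbb H_\rho$, (iii) by the Riesz/density argument, and (iv) by passing to the field $v_{ij}=m_{ij}/g_{ij}(\rho)$ (zero where $g_{ij}(\rho)=0$), using $F(\rho,m)=\|v\|_\rho^2$, projecting onto $T_\rho\mathcal P(G)$ and invoking (iii). The only slip is in (i), where the pairing with $\mathbf 1$ should involve $\nabla_G\cdot(m)$ and the unweighted inner product rather than $\divrho(m)$ — but this is exactly the same skew--symmetry cancellation the paper uses, so the argument stands.
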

\proof{} (i) We use the fact that $m_{ij}+m_{ji}=0$ for any $(i,j) \in E$ and that $(i,j) \in E$ if $(j,i) \in E$ to obtain (i).

(ii) Let $v\in S^{n\times n}$ be such that $v\not = \pi_\rho(v).$ We have $\|v\|_\rho > \|\pi_\rho(v)\|_\rho$ and by the characterization of $\pi_\rho(v)$ in \eqref{eq:oct22.2017.2}, we have  $\bigl(v, l_\rho(h)\bigr)_\rho=\bigl(\pi_\rho(v), l_\rho(h)\bigr)_\rho$. Hence, 
\[
\mathcal E[\rho, v](h)> {1 \over 2}  \bigl\| \pi_\rho(v)\bigr\|^2_\rho -\bigl(\pi_\rho(v), l_\rho(h)\bigr)_\rho=  {1 \over 2}  \bigl\| \pi_\rho(v)-l_\rho(h)\bigr\|^2_\rho -{1 \over 2} \|l_\rho(h)\|^2_\rho.
\]
If $\pi_\rho(v)\not =l_\rho(h)$, we conclude that 
\[
\mathcal E[\rho, v](h)> -{1 \over 2} \|l_\rho(h)\|^2_\rho= \mathcal E[\rho, l_\rho(h)](h).
\]
This concludes the proof of (ii). 

(iii) Since 
\[
\bigl(\nabla_G \phi, l_\rho(h)\bigr)_\rho=(\phi, h )=\bigl(\nabla_G \phi,w\bigr)_\rho \qquad \forall \phi \in \mathbb R^n, 
\]
we have  
\[
\bigl(v, l_\rho(h)-w\bigr)_\rho=0 \qquad \forall v\in T_\rho \mathcal P(G), 
\]
which proves (iii). 

(iv)  Assume $h=-\nabla_G \cdot (m)$ and set $ l_\rho(h)=w.$ If $F(\rho, m)=\infty$ there is nothing to prove. Assume that $F(\rho, m)<\infty$ so that $m_{ij}=0$ whenever $g_{ij}(\rho)=0$. There is a unique vector field $v$ such that $g_{ij}(\rho) v_{ij}= m_{ij}$ and $v_{ij}=0$ whenever $g_{ij}(\rho)=0$. We have 
\begin{equation}\label{eq:oct22.2017.4}
F(\rho, m)=\sum_{g_{ij}(\rho)>0} {m^2_{ij} \over 2 g_{ij}(\rho)}={1\over 2} \sum_{g_{ij}(\rho)>0} g_{ij}(\rho) v^2_{ij}={1\over 2} \sum_{(i,j) \in E} g_{ij}(\rho) v^2_{ij}= \|v\|^2_\rho \geq 
\bigl\|\pi_\rho(v)\bigr\|^2_\rho.
\end{equation}
Since 
\[h=-\nabla_G \cdot (m)=-\divrho (v)=  -\divrho \bigl(\pi_\rho(v)\bigr), \]
(iii) implies $\pi_\rho(v)=w$. This, together with \eqref{eq:oct22.2017.4} proves (iv). \endproof

%%%%%%%%%%%%%%%%%%%%%%%%%%%%%%%%%%%%%%%%%%%%%%%%%%%%%
%									Remark                                                                              %
%%%%%%%%%%%%%%%%%%%%%%%%%%%%%%%%%%%%%%%%%%%%%%%%%%%%%
\begin{remark}\label{rem:conA} The following remarks are needed in the manuscript.
\begin{enumerate}
\item[(i)] $\rho \in L^2(0,1; \mathbb R^n)$ and $m\in L^2(0,1; S^{n \times n})$ are such that $\mathcal A(\rho, m)<\infty$ then $ f\bigl(g_{ij}(\rho) ,m_{ij}\bigr) \in    L^1(0,1)$ and for any $(i,j)\in E$, 
\begin{equation}\label{eq:claim1}
\mathcal{L}^1\Bigl(\Bigl\{t\in(0,1)\;\; \Big| \;\; g_{ij}( \rho(t))=0, \;\; m_{ij}(t)\not=0 \Bigr\}\Bigr)=0. 
\end{equation}
\item[(ii)] $\mathcal A$ is non--negative and lower semicontinuous on $L^2(0,1; \mathbb R^n) \times L^2(0,1; S^{n \times n})$ for the weak convergence.
\end{enumerate}
\end{remark}
\proof{} We skip the proof of (i). Since  $F$ is a nonnegative convex, lower semicontinuous function, by standard theory of the calculus of variations (cf. e.g. \cite{EkelandT}) we obtain (ii).\endproof

%%%%%%%%%%%%%%%%%%%%%%%%%%%%%%%%%%%%%%%%%%%%%%%%%%%%%
%									Theorem                                                                            %
%%%%%%%%%%%%%%%%%%%%%%%%%%%%%%%%%%%%%%%%%%%%%%%%%%%%%
\begin{theorem}\label{th:exist} Assume  $\rho^0, \rho^1 \in \mathcal P(G).$ 
\begin{enumerate}
\item[(i)] There exists $(v^*, \rho^*, m^*)$ such that   $(v^*, \rho^*)$ is a minimizer in \eqref{2-W dist} and $(\rho^*, m^*)$ minimizes $\mathcal A$ over $\mathcal C(\rho^0, \rho^1).$  
\item[(ii)] Furthermore,  
\[
2\inf_{\mathcal C(\rho^0, \rho^1)} \mathcal A= \mathcal W_g^2(\rho^0, \rho^1)= \int_0^1 \bigl\| v^*\bigr\|^2_{\rho^*} dt=2\mathcal A(\rho^*,m^*).
\]
\item[(iii)] We have  
\begin{equation*}
F(\rho^*, m^*)(t)= F(\rho^*, m^*)(0) \quad \text{a.e. on} \quad (0,1). %  \rho_i^*\in H^1(0,1),\quad  v_{ij}^* \sqrt{g_{ij}(\rho^*)} \in L^2(0,1).
\end{equation*}
\end{enumerate}
\end{theorem}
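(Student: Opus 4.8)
\emph{Proof proposal.} The plan is to obtain (i) and (ii) by the direct method applied to the momentum formulation governed by $\mathcal A$, then to transfer everything to the velocity formulation \eqref{2-W dist}, and finally to deduce (iii) by a reparametrization (first variation) argument exploiting the homogeneity of $f$.

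\emph{Existence of a minimizer of $\mathcal A$.} By Proposition~\ref{pro:finite-energy-n-vertices} the number $A_\ast:=\inf_{\mathcal C(\rho^0,\rho^1)}\mathcal A$ is finite, and $A_\ast\ge 0$ since $F\ge 0$. Take a minimizing sequence $(\rho^k,m^k)\in\mathcal C(\rho^0,\rho^1)$ with $\mathcal A(\rho^k,m^k)\le A_\ast+1$. Since $\rho^k(t)\in\mathcal P(G)\subset[0,1]^n$, the $\rho^k$ are bounded in $L^\infty$; applying Lemma~\ref{le:upper-bound-on-m} with $E_0=E$ and integrating in $t$ bounds $(m^k)_k$ in $L^2(0,1;S^{n\times n})$ by $2\epsilon_0(g)(A_\ast+1)$, and then \eqref{3_newb} bounds $(\dot\rho^k)_k$ in $L^2$, so $(\rho^k)_k$ is bounded in $H^1(0,1;\mathbb R^n)$. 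Passing to a subsequence, $\rho^k\to\rho^\ast$ uniformly by Rellich and $m^k\rightharpoonup m^\ast$ weakly in $L^2$; the uniform limit $\rho^\ast$ takes values in $\mathcal P(G)$ and has the prescribed endpoints, and the linear constraint \eqref{3_newb} passes to the limit, so $(\rho^\ast,m^\ast)\in\mathcal C(\rho^0,\rho^1)$. By Remark~\ref{rem:conA}(ii), $\mathcal A(\rho^\ast,m^\ast)\le\liminf_k\mathcal A(\rho^k,m^k)=A_\ast$, hence $\mathcal A(\rho^\ast,m^\ast)=A_\ast$.

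\emph{Passage between the two formulations.} Since $\mathcal A(\rho^\ast,m^\ast)<\infty$, Remark~\ref{rem:conA}(i) gives $m^\ast_{ij}=0$ a.e. on $\{t:g_{ij}(\rho^\ast(t))=0\}$, so the field $v^\ast$ with $v^\ast_{ij}=m^\ast_{ij}/g_{ij}(\rho^\ast)$ on $\{g_{ij}(\rho^\ast)>0\}$ and $v^\ast_{ij}=0$ otherwise is measurable, satisfies $m^\ast_{ij}=g_{ij}(\rho^\ast)v^\ast_{ij}$ a.e., and by \eqref{eq:example-of-c1} obeys $(v^\ast,v^\ast)_{\rho^\ast}=F(\rho^\ast,m^\ast)\in L^1(0,1)$; using (H-ii), \eqref{3_newb} becomes $\dot\rho^\ast+\mathrm{div}_{\rho^\ast}(v^\ast)=0$, so $(v^\ast,\rho^\ast)$ is admissible in \eqref{2-W dist} with $\int_0^1(v^\ast,v^\ast)_{\rho^\ast}\,dt=2\mathcal A(\rho^\ast,m^\ast)$, whence $\mathcal W_g^2(\rho^0,\rho^1)\le 2A_\ast$. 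Conversely, given any admissible $(v,\rho)$ in \eqref{2-W dist} with finite energy, set $m_{ij}=g_{ij}(\rho)v_{ij}$; then $F(\rho,m)=(v,v)_\rho$, and $m_{ij}^2\le\epsilon_0(g)\,g_{ij}(\rho)v_{ij}^2$ because $g_{ij}(\rho)\le\epsilon_0(g)$ for $\rho\in\mathcal P(G)$ by (H-iv) and the definition of $\epsilon_0(g)$, so $m\in L^2$ and $(\rho,m)\in\mathcal C(\rho^0,\rho^1)$, giving $2A_\ast\le\int_0^1(v,v)_\rho\,dt$. Taking the infimum over $(v,\rho)$ yields $\mathcal W_g^2(\rho^0,\rho^1)=2A_\ast=2\mathcal A(\rho^\ast,m^\ast)=\int_0^1(v^\ast,v^\ast)_{\rho^\ast}\,dt$, which is (i) and (ii).

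\emph{Constancy of the action density and the main obstacle.} Fix $\eta\in C_c^\infty(0,1)$; for $|\epsilon|$ small, $\sigma_\epsilon(t):=t+\epsilon\eta(t)$ is an increasing $C^\infty$ diffeomorphism of $[0,1]$ fixing $0$ and $1$. Set $\rho^\epsilon:=\rho^\ast\circ\sigma_\epsilon$ and $m^\epsilon_{ij}:=\dot\sigma_\epsilon\,(m^\ast_{ij}\circ\sigma_\epsilon)$. Then $(\rho^\epsilon,m^\epsilon)\in\mathcal C(\rho^0,\rho^1)$ — the endpoints are preserved and the chain rule turns \eqref{3_newb} for $(\rho^\ast,m^\ast)$ into \eqref{3_newb} for $(\rho^\epsilon,m^\epsilon)$ — and since $f(t,\cdot)$ is $2$--homogeneous, $F(\rho^\epsilon,m^\epsilon)(t)=\dot\sigma_\epsilon(t)^2F(\rho^\ast,m^\ast)(\sigma_\epsilon(t))$, so after the change of variables $u=\sigma_\epsilon(t)$ one gets $2\mathcal A(\rho^\epsilon,m^\epsilon)=\int_0^1\dot\sigma_\epsilon\big(\sigma_\epsilon^{-1}(u)\big)F(\rho^\ast,m^\ast)(u)\,du$. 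Because $\dot\sigma_\epsilon(\sigma_\epsilon^{-1}(u))=1+\epsilon\dot\eta(u)+O(\epsilon^2)$ with $\epsilon$--derivatives bounded uniformly in $u$, and $F(\rho^\ast,m^\ast)\in L^1(0,1)$, one may differentiate under the integral at $\epsilon=0$; minimality of $(\rho^\ast,m^\ast)$ forces $\int_0^1\dot\eta(u)\,F(\rho^\ast,m^\ast)(u)\,du=0$ for all $\eta\in C_c^\infty(0,1)$, so $F(\rho^\ast,m^\ast)$ has vanishing distributional derivative on $(0,1)$ and is a.e. constant, proving (iii). Parts (i)--(ii) are routine once Lemma~\ref{le:upper-bound-on-m} is available; the only delicate point is in (iii), namely checking that $\rho^\ast\circ\sigma_\epsilon\in H^1$ and $m^\ast\circ\sigma_\epsilon\in L^2$ so that $(\rho^\epsilon,m^\epsilon)$ is genuinely admissible, and justifying the differentiation under the integral sign by an $L^1$ dominating function of the form $C\,F(\rho^\ast,m^\ast)$. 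An alternative route to (iii) uses $\big(\int_0^1\sqrt{F(\rho^\ast,m^\ast)}\,dt\big)^2\le 2\mathcal A(\rho^\ast,m^\ast)$ together with the reparametrization invariance of the left-hand side and an explicit arc-length reparametrization of $(\rho^\ast,m^\ast)$, the equality case of Cauchy--Schwarz again forcing $F(\rho^\ast,m^\ast)$ to be constant; there the technical burden shifts to handling the generalized inverse of the arc-length function where $F(\rho^\ast,m^\ast)$ vanishes.
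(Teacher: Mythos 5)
Your proposal is correct and follows essentially the same route as the paper: the direct method with the bound from Lemma \ref{le:upper-bound-on-m} and the lower semicontinuity of Remark \ref{rem:conA} for existence, the construction $v^*_{ij}=m^*_{ij}/g_{ij}(\rho^*)$ together with the estimate $m_{ij}^2\leq \epsilon_0(g)\,g_{ij}(\rho)v_{ij}^2$ to identify the two formulations, and an inner variation in time (composition with $t+\epsilon\eta$, equivalently the paper's $T=S^{-1}$) exploiting the $2$--homogeneity of $f(t,\cdot)$ to prove that $F(\rho^*,m^*)$ is a.e.\ constant. The only difference is cosmetic: you reparametrize the minimizer directly and change variables, while the paper composes with the inverse diffeomorphism, so no substantive comparison is needed.
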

\proof{} By Proposition \ref{pro:finite-energy-n-vertices}, there is a positive number $i_0$ and a path $(\rho, m) \in \mathcal C(\rho^0, \rho^1)$  such that $\mathcal A(\rho, m) \leq i_0$. By Lemma \ref{le:upper-bound-on-m},  
\[
\int_0^1 |m(t)|^2 dt \leq  2\epsilon_0(g) i_0.
\]
Using the differential equation linking $ \dot \rho$ to $m$ we conclude that  for a constant $C$ depending only on $i_0$,  $\epsilon_0(g)$, $w$ and $n$, we have 
\[
  \bigl\|\dot \rho  \bigr\|^2_{L^2(0,1)} \leq C.
\]
Increasing the value of $C$ is necessary, we use the Poincar\'e--Wintiger inequality to obtain 
\[
\|\rho \|^2_{H^1(0,1)} \leq C.
\]
As a consequence, the intersection of $\mathcal C(\rho^0, \rho^1)$ with any sub--level subsection of $\mathcal A$ is precompact set of $H^1(0,1; \mathbb R^n) \times L^2(0,1; S^{n \times n})$ for the weak topology. By Remark \ref{rem:conA} (ii), $\mathcal A$ is weakly lower semicontinuous and so, it achieves its minimum at some $(\rho^*, m^*) \in \mathcal C(\rho^0, \rho^1).$

Since $\int_0^1 F(\rho^*(t), m^*(t))<\infty$, by Remark \ref{rem:conA} (i), the set obtained as the union over $(i,j)\in E$ of the sets $\{g_{ij}( \rho^*)=0\} \cap \{ m_{ij}^*\not=0\}$ is of null measure. Thus the functions $v^*_{ij}:(0,1) \rightarrow \mathbb R$ defined as  
\begin{equation}\label{eq:minimizer1}
v^*_{ij}(t)= 
\left\{
\begin{array}{ll}
{m^*_{ij}(t) \over g_{ij}(\rho^*(t)) }  & \quad\mbox{if}\,\, g_{ij}(\rho^*(t))>0\\ 
  &      \\ 
0  &\quad\mbox{if}\,\, g_{ij}(\rho^*(t))=0
\end{array}
\right.
\end{equation}
 are measurable and satisfies $m^*_{ij}= g_{ij}(\rho^*) v^*_{ij}.$ Let $(\rho, v)$ be an admissible path in \eqref{2-W dist}. This means we are assuming that $(\rho(0), \rho(1))=(\rho^0, \rho^1),$ $\rho \in H^1(0,1; \mathbb R^n)$, $v:[0,1] \rightarrow S^{n \times n}$ is Borel measurable  and %$\rho_{ij} v_{ij} \in L^1(0,1)$ for any $(i,j) \in E.$ 
\[
\int_0^1 \|v\|^2_ \rho dt<\infty.
\]
 Setting $m_{ij}=g_{ij}(\rho) v_{ij}$ we have 
 \[
 \int_0^1 |m|^2 dt =   {1\over 2}\sum_{(i,j) \in E} \int_0^1 v_{ij}^2 g^2_{ij}(\rho) dt \leq \epsilon_0(g) \sum_{(i,j) \in E} \int_0^1 v_{ij}^2 g_{ij}(\rho) dt=2 \epsilon_0(g)  \int_0^1 \|v\|^2_ \rho dt.
 \] 
 Thus, $m \in L^2(0,1;S^{n \times n})$ and so,  $(\rho, m) \in \mathcal C(\rho^0, \rho^1).$ By the definition of $v^*$
 \begin{equation}\label{eq:minimizer1new2}
  \|v^*\|^2_ {\rho^*} =   F(\rho^*, m^*).
\end{equation}
 By the minimality property of $(\rho^*, m^*)$, we have  
\begin{equation}\label{eq:minimizer1new}
  \int_0^1 \|v^*\|^2_ {\rho^*} dt = 2   \mathcal A(\rho^*, m^*) \leq  2   \mathcal A(\rho, m)=  \int_0^1 \|v\|^2_ {\rho} dt.
\end{equation}
 This proves (i) and also (ii). 
 
(iii) Here, we borrow ideas from \cite{chenGGT}. Let $\zeta \in C_c^1(0,1)$ be arbitrary and set $S(t)=t+\epsilon \zeta(t).$ We have $S(0)=0,$ $S(1)=1$ and $\dot S(t)=1+\epsilon \dot \zeta(t)>1/2$ for $|\epsilon|<<1.$ Thus, $S:[0,1] \rightarrow [0,1]$ is a diffeomorphism. Let $T:=S^{-1}$ and set
\[
f(s)= \rho^*(T(s)), \quad w(s)= \dot T(s) m^*(T(s)).
\] 
We have 
\[
\dot f + \nabla_G \cdot (w)=0, \quad f(0)=\rho^0, \; f(1)=\rho^1.
\] 
Thus, $(f, w) \in \mathcal C(\rho^0, \rho^1)$ and so, 
\[
\int_0^1 F( \rho^*, m^*) dt \leq \int_0^1 F ( f, w) ds=  \int_0^1 \dot T^2 F( \rho^*(T), m^*(T)) ds.
\]
We use the fact that $dt= \dot T(s) ds$ and $\dot T(S(t)) \dot S(t)=1$ to conclude that
\[
\int_0^1 F( \rho^*, m^*) dt  \leq \int_0^1 {1 \over \dot S} F ( \rho^*, m^*) dt= \int_0^1 (1- \epsilon \dot \zeta + o(\epsilon)) F( \rho^*, m^*) dt.
\]
Since $\epsilon \rightarrow \int_0^1 (1- \epsilon \dot \zeta + o(\epsilon)) F ( \rho^*, m^*)  dt$ admits its minimum at $0$, we conclude that its derivative there is null, i.e.,
\[
\int_0^1 \dot \zeta(t) F ( \rho^*,  m^*) dt=0.
\]
This proves that the distributional derivative of $F ( \rho^*,  m^*)$ is null and so, $F ( \rho^*,  m^*)$ is independent of $t$. \endproof

\begin{remark}\label{re:exist} Let $(\rho^*, m^*, v^*)$ as in Theorem \ref{th:exist}
\begin{enumerate}
\item[(i)] We have $\|v^*\|^2_{\rho^*}= F(\rho^*, m^*)=\mathcal W_g^2(\rho^0, \rho^1)$ and so,  $\|v^*\|_{\rho^*} \in L^\infty(0,1)$. 
\item[(ii)] We have $ \rho^* \in W^{1,\infty}(0,1; \mathbb R^n),$ $m^* \in L^\infty(0,1; S^{n\times n})$  and  
\[
\bigl|m^*_{ij}\bigr| \leq \mathcal W_g(\rho^0, \rho^1)\; \sqrt{ \max_{[0,1]^2} g}.
\]
\end{enumerate}
\end{remark}
\proof{} (i) is a direct consequence of Theorem \ref{th:exist} (iii) and \eqref{eq:minimizer1new2}.

(ii)  The same argument which led to the definition of $v^*$ in \eqref{eq:minimizer1} can be used to conclude that if $(i,j) \in E$ then either $m^*_{ij}(t)=0$ or 
\[
{\bigl(m^*_{ij}\bigr)^2\over  g(\rho^*_i, \rho^*_j)} (t) \leq F(\rho^*, m^*)(t)=\mathcal W_g^2(\rho^0, \rho^1).
\] 
In any of these two cases,  
\[
\bigl(m^*_{ij}\bigr)^2 \leq \mathcal W_g^2(\rho^0, \rho^1) g(\rho^*_i, \rho^*_j)   \leq \mathcal W_g^2(\rho^0, \rho^1) \max_{[0,1]^2} g, 
\]
which yields the desired inequality in (ii). This shows $m^* \in L^\infty(0,1; S^{n\times n}),$  which together with the identity $\dot \rho=-\nabla_G \cdot m^*,$ shows $\dot \rho \in L^\infty(0,1; \mathbb R^n).$\endproof

%%%%%%%%%%%%%%%%%%%%%%%%%%%%%%%%%%%%%%%%%%%%%%%%%%%%%%%%%%%
%
%					section
%
%%%%%%%%%%%%%%%%%%%%%%%%%%%%%%%%%%%%%%%%%%%%%%%%%%%%%%%%%%%
\section{Duality in a Smooth Setting}\label{section5} 
Throughout this section we further assume that $g$ satisfies \eqref{eq:finite energy}. The main purpose of the section is to find the Euler--Lagrange equations satisfied by geodesics of minimal action connecting $\rho^0, \rho^1 \in \mathcal P(G)$. We will express these Euler--Lagrange equations in terms of the Hamiltonian $H$ defined in \eqref{eq:defn-of-H}. It is convenient to set 
\begin{equation*}
A:=\Bigl\{\rho\in L^2\left(0,1;\mathbb R^{n}\right)\; \big| \; \sum_{i=1}^{n}\rho_i=1,\;\; \rho_i \geq 0\; \forall i=1, \cdots, n\Bigr\}  \times L^2\left(0,1; S^{n\times n}\right), \quad B:=H^1\left(0,1;\mathbb R^{n}\right).
\end{equation*} 
For $l\in (0, \infty)$ we set 
\begin{equation}\label{eq:defnA-l}
A^l:=\biggl\{\rho\in L^2\left(0,1;\mathbb R^{n}\right)\;\; \Big| \,\,\sum_{i=1}^{n}\rho_i\leq l,\;\; \rho_i \geq 0\; \forall i=1, \cdots, n\biggr\}\times L^2\left(0,1; S^{n\times n}\right),
\end{equation}
and
\[
B_l:=\bigl\{\lambda\in B\;\; | \;\;\|\lambda\|_{H^1\left(0,1;\mathbb R^{n}\right)}\leq l\bigr\}.
\]

We plan to prove the duality property  
\begin{equation}\label{eq:min m}
\min_{\mathcal C(\rho^0, \rho^1)} \mathcal A= \sup_{\lambda\in B}\left\{\left(\lambda(1),\rho^1\right)-\left(\lambda(0),\rho^0\right)\;\; \Big| \;\;H\left(\dot \lambda,\nabla_G\lambda\right)=0\right\}.
\end{equation}
As we will show, this reduces to a minimax identity for 
\begin{equation*}
\mathcal{L}(\rho,m,\lambda):=\left(\lambda(1), \rho^1\right)-\left(\lambda(0), \rho^0\right)+ \mathcal A(\rho,m)- 
\int_0^1\left( (\dot \lambda ,\rho)+\left( m,\nabla_G\lambda\right)\right) dt.
\end{equation*}

\begin{proposition}\label{thm:minmax both bdd} For any $l>0$, $e\geq 1$  and $A_* \in \{A, A^{e}\}$ we have  
\[
\inf_{(\rho,m)\in A_*}\;\; \sup_{\lambda\in B_l}\mathcal{L}(\rho,m,\lambda)=\sup_{\lambda \in B_l}\;\; \inf_{(\rho,m)\in A_*}\mathcal{L}(\mathcal{\rho},m,\lambda).
\]
\end{proposition}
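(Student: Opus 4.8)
The plan is to invoke a version of Sion's minimax theorem. First I would check the structural hypotheses: the function $\mathcal L(\rho,m,\lambda)$ is, for fixed $(\rho,m)$, affine (hence concave and upper semicontinuous) in $\lambda$, since the only $\lambda$--dependent terms are $(\lambda(1),\rho^1)-(\lambda(0),\rho^0)$ and $-\int_0^1\bigl((\dot\lambda,\rho)+(m,\nabla_G\lambda)\bigr)dt$, all linear in $\lambda$ and continuous on $H^1(0,1;\mathbb R^n)$. For fixed $\lambda$, the map $(\rho,m)\mapsto \mathcal L(\rho,m,\lambda)$ is convex: the terms $(\lambda(1),\rho^1)-(\lambda(0),\rho^0)$ are constant, $-\int_0^1\bigl((\dot\lambda,\rho)+(m,\nabla_G\lambda)\bigr)dt$ is affine in $(\rho,m)$, and $\mathcal A(\rho,m)=\tfrac12\int_0^1 F(\rho,m)dt$ is convex by Remark \ref{re:Legendre-trans}(i); it is also lower semicontinuous for the weak topology by Remark \ref{rem:conA}(ii), while the affine part is weakly continuous. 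So $(\rho,m)\mapsto\mathcal L$ is weakly lower semicontinuous.

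Next I would arrange the compactness needed to run the minimax theorem. On the $\lambda$ side, $B_l$ is a bounded, convex, closed subset of the Hilbert space $H^1(0,1;\mathbb R^n)$, hence weakly compact; and $\lambda\mapsto\mathcal L(\rho,m,\lambda)$ is weakly continuous on $B_l$ (it is affine and continuous in the $H^1$ norm). On the $(\rho,m)$ side, when $A_*=A^e$ the set of $\rho$'s is bounded in $L^2(0,1;\mathbb R^n)$, but $m$ ranges over all of $L^2(0,1;S^{n\times n})$, so $A^e$ is not itself bounded; however, for fixed $\lambda$ the functional $(\rho,m)\mapsto\mathcal L(\rho,m,\lambda)$ is coercive in $m$. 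Indeed, by Lemma \ref{le:upper-bound-on-m} with $E_0=E$ and using $\sum_k\rho_k\le e$ on $A^e$, we have $\mathcal A(\rho,m)\ge \tfrac{1}{2e\,\epsilon_0(g)}\int_0^1|m|^2\,dt$, while the linear term $-\int_0^1(m,\nabla_G\lambda)dt$ is controlled by $\|m\|_{L^2}\|\nabla_G\lambda\|_{L^2}$ with $\|\nabla_G\lambda\|_{L^2}$ bounded in terms of $l$; hence sublevel sets of $\mathcal L(\cdot,\cdot,\lambda)$ intersected with $A^e$ are bounded in $L^2\times L^2$, convex and weakly closed, thus weakly compact. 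For $A_*=A$ the $\rho$--component is also only required to sum to $1$, which is still an $L^2$--bounded constraint, so the same coercivity-in-$m$ argument applies. This reduces the problem to a minimax over a weakly compact convex set against a weakly compact convex set with the appropriate semicontinuity, so Sion's theorem (in the form valid for one set compact, see e.g. the lop-sided version) yields the claimed equality of $\inf\sup$ and $\sup\inf$.

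The step I expect to be the main obstacle is the coercivity/compactness bookkeeping on the $(\rho,m)$ side when $A_*=A$ rather than $A^e$: there $\sum_i\rho_i=1$ pins the $L^1$ norm of $\rho$ but one must still produce an $L^2$ bound on $\rho$ and then feed it, via Lemma \ref{le:upper-bound-on-m}, into a quadratic coercivity estimate on $m$ that is uniform enough to make the relevant sublevel sets weakly compact; one has to be careful that $\epsilon_0(g)$ and the bound on $\|\nabla_G\lambda\|_{L^2}$ over $B_l$ are finite, which they are ($\epsilon_0(g)<\infty$ by the definition following (H-i)--(H-v), and $\nabla_G$ is a bounded linear operator on a finite-dimensional target). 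A secondary technical point is to confirm that the correct hypotheses of the minimax theorem one cites are met — it suffices that \emph{one} of the two sets is compact and the function has the convex/lsc and concave/usc structure just described — and to handle the harmless constant terms $(\lambda(1),\rho^1)-(\lambda(0),\rho^0)$, which do not affect convexity in $(\rho,m)$. Once these are in place the conclusion is immediate.
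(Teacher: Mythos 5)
Your proposal is correct and follows essentially the same route as the paper: a minimax theorem requiring compactness of only one side, applied with $B_l$ convex and weakly compact in $H^1$, weak continuity (affinity) of $\lambda\mapsto\mathcal{L}(\rho,m,\lambda)$, and convexity plus weak lower semicontinuity of $(\rho,m)\mapsto\mathcal{L}(\rho,m,\lambda)$ on $A_*$. The coercivity-in-$m$ detour is superfluous, since no compactness is needed on the $(\rho,m)$ side; the only point worth adding is the degenerate case $\mathcal A(\rho,m)=+\infty$, where $\mathcal{L}(\rho,m,\cdot)\equiv+\infty$ and the relevant superlevel set is all of $B_l$, which the paper disposes of in one line.
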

\begin{proof} Let $(\bar \rho, \bar m)\in A_*$, $\bar \lambda\in B_l$ and $C \in \mathbb R$ be arbitrary. To show the proposition, according to the standard minimax theorem (cf. e.g. )\cite{MertensSZ}), it suffices to show the following properties: $B_l$ is a convex set, compact set for the weak topology (which is obviously the case), $A_*$ is a convex topological space (which is obviously the case for the weak topology), $\{\lambda\in B_l\; | \;\mathcal{L}(\bar \rho, \bar m,\lambda)\geq C\}$ is a closed convex set in $B_l$ and $\{(\rho,m)\in A_*\; |  \,\mathcal{L}(\rho,m,\bar \lambda)\leq C\}$ is a closed convex set in $A_*.$  

When $\mathcal A (\bar \rho, \bar m)=\infty$ then $\mathcal{L}(\bar \rho, \bar m,\cdot) \equiv \infty$ and so, $\{\lambda\in B_l\,|\,\mathcal{L}(\bar \rho, \bar m,\lambda)\geq C\}=B_l$ is a closed convex set. When $\mathcal A(\bar \rho, \bar m)<\infty,$ $\mathcal{L}(\bar \rho, \bar m,\cdot)$ is linear and so, $\{\lambda\in B_l\,|\,\mathcal{L}(\bar \rho, \bar m,\lambda)\geq C\}$ is convex. We use the fact that bounded subsets of $H^1(0,1)$ are compact in $C[0,1]$, that $\bar m \in L^2$ and $\bar \rho \in L^\infty$ to conclude that $\mathcal{L}(\bar \rho, \bar m,\cdot)$ is a continuous function on $B_l$ and so, $\{\lambda\in B_l\,|\,\mathcal{L}(\bar \rho, \bar m,\lambda)\geq C\}$ is a closed subset of $B_l.$

By the fact that $F$ is a convex function and $A_*$ is a convex set,  $\{(\rho,m)\in A_*\,|\,\mathcal{L}(\rho,m, \bar \lambda)\leq C\}$ is convex subset of $A_*.$ One part of $\mathcal{L}(\cdot, \cdot,\bar \lambda)$ is a linear functional and by Remark \ref{rem:conA} (ii), the other part is weakly lower semicontinuous. Thus, $\mathcal{L}(\cdot, \cdot,\bar \lambda)$ is itself weakly lower semicontinuous and so, $\{(\rho,m)\in A_*\; | \; \mathcal{L}(\rho,m,\bar \lambda)\leq C\}$ is a closed subset of $A_*$. This, concludes the proof of the Proposition. 
\end{proof}

Since $\mathcal{L}$ is linear with respect to $\lambda$, we have 
\begin{equation}\label{eq:property-of-E-0}
\sup_{B_l}\mathcal{L}(\rho,m,\lambda)= \mathcal A(\rho,m)+l\mathcal{E}(\rho,m)
\end{equation} 
where 
\[
\mathcal{E}(\rho,m)= \sup_{\lambda \in B_1}\left(\lambda(1),\rho^1\right)-\left(\lambda(0),\rho^0\right)-\int_0^1\left( (\dot \lambda ,\rho )+( m,\nabla_G \lambda) \right) dt.
\]
Observe that 
\begin{equation}\label{eq:property-of-E}
\mathcal{E}(\rho,m)=\begin{cases}
\hfill 0 & \text{if} \quad (\rho, m) \in \mathcal C(\rho^0, \rho^1)\\
>0 & \text{otherwise.}
\end{cases}
\end{equation}
\begin{remark}\label{rem:minA-El} Let $l>0$, $e\geq 1$ and $A_*\in\{A,A^e\}$. By Theorem \ref{th:exist}, $\mathcal A$ achieves its minimum over $\mathcal C(\rho^0, \rho^1)$ at some $(\rho^*, m^*).$ By \eqref{eq:property-of-E}, we obtain that $\mathcal E(\rho^*, m^*)=0$ and thus the infimum of $\mathcal A +l \mathcal E$ over $A_*$ is between $0$ and $\mathcal A(\rho^*, m^*).$
\end{remark}

\begin{lemma}\label{lem:E lsc} Let $e \geq 1$ and let $A_* \in \{A, A^e\}.$ The following hold. 
\begin{enumerate}
\item[(i)] $\mathcal{E}$ is convex and weakly lower semi--continuous on $L^2(0,1;\mathbb R^{n})\times L^2(0,1; S^{n\times n})$.
\item[(ii)] For any $l>0$, there exists $(\rho^{*,l},m^{*,l})$ which minimizes $\mathcal A +l \mathcal E$ over $A_*$. 
\item[(iii)] The set $\{(\rho^{*,l},m^{*,l})\; | \; l > 0 \}$ is pre-compact in $A_*.$
\end{enumerate}
\end{lemma}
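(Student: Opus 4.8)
The plan is to establish the three items in order, since (ii) relies on (i) and (iii) relies on the construction in (ii). For part (i), the key observation is that $\mathcal{E}$ is a supremum of functions that are each affine in $(\rho, m)$: for fixed $\lambda \in B_1$ the map
\[
(\rho, m) \mapsto (\lambda(1), \rho^1) - (\lambda(0), \rho^0) - \int_0^1 \bigl( (\dot\lambda, \rho) + (m, \nabla_G \lambda) \bigr)\, dt
\]
is continuous and linear in $(\rho, m)$ for the weak $L^2 \times L^2$ topology (the boundary terms $(\lambda(0), \rho^0)$, $(\lambda(1), \rho^1)$ do not depend on $(\rho, m)$ at all, and the integral pairings are exactly weak-continuous linear functionals since $\dot\lambda, \nabla_G\lambda \in L^2$). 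A supremum of a family of continuous affine functions is convex and lower semicontinuous, which gives (i). Alternatively one can invoke \eqref{eq:property-of-E-0}: $\mathcal{E} = \bigl(\sup_{B_l}\mathcal{L}(\rho,m,\cdot) - \mathcal{A}\bigr)/l$ is awkward because of the $\mathcal{A}$ term, so I would stick with the direct supremum-of-affine-functionals argument.

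For part (ii), I would run the direct method of the calculus of variations, mimicking the proof of Theorem \ref{th:exist}. Fix $l>0$. By Remark \ref{rem:minA-El}, $\inf_{A_*}(\mathcal{A} + l\mathcal{E})$ is finite (between $0$ and $\mathcal{A}(\rho^*, m^*)<\infty$, the latter finiteness coming from Proposition \ref{pro:finite-energy-n-vertices}). Take a minimizing sequence $(\rho^k, m^k) \subset A_*$; we may assume $\mathcal{A}(\rho^k, m^k) + l\mathcal{E}(\rho^k, m^k) \leq \mathcal{A}(\rho^*, m^*) + 1$ for all $k$. Since both $\mathcal{A} \geq 0$ and $\mathcal{E} \geq 0$, this bounds $\mathcal{A}(\rho^k, m^k)$ uniformly, and then Lemma \ref{le:upper-bound-on-m} gives a uniform $L^2$ bound on $m^k$ (using that $\sum_k \rho^k_k \leq e$ on $A^e$, or $=1$ on $A$, so $\sum_{(i,j)}(m^k_{ij})^2 \leq \epsilon_0(g)\, e \sum f(g_{ij}(\rho^k), m^k_{ij})$ integrated in time). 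The $\rho^k$ are bounded in $L^2$ because they take values in a bounded subset of $\mathbb{R}^n$. Passing to a weakly convergent subsequence $(\rho^k, m^k) \rightharpoonup (\rho^{*,l}, m^{*,l})$ in $L^2 \times L^2$, the constraint set $A_*$ is weakly closed (it is closed and convex), so the limit lies in $A_*$; and by part (i) together with Remark \ref{rem:conA}(ii), both $\mathcal{E}$ and $\mathcal{A}$ are weakly lower semicontinuous, so $\mathcal{A} + l\mathcal{E}$ is too, and the limit is a minimizer.

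For part (iii), I would show that as $l$ ranges over $(0,\infty)$ the minimizers $(\rho^{*,l}, m^{*,l})$ satisfy a uniform bound of the same type. By minimality and Remark \ref{rem:minA-El}, $\mathcal{A}(\rho^{*,l}, m^{*,l}) + l\,\mathcal{E}(\rho^{*,l}, m^{*,l}) \leq \mathcal{A}(\rho^*, m^*)$, a bound independent of $l$; since both summands are nonnegative this bounds $\mathcal{A}(\rho^{*,l}, m^{*,l})$ uniformly in $l$, and Lemma \ref{le:upper-bound-on-m} again converts this into a uniform $L^2$ bound on $m^{*,l}$. The $\rho^{*,l}$ are automatically uniformly bounded in $L^2$ since $A_*$ forces pointwise bounds on the components. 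Hence $\{(\rho^{*,l}, m^{*,l}) : l>0\}$ lies in a bounded subset of $L^2(0,1;\mathbb{R}^n) \times L^2(0,1; S^{n\times n})$, which is weakly precompact; and since $A_*$ is weakly closed, it is precompact in $A_*$ for the weak topology. The main subtlety — really the only place that needs care — is in (iii): one must notice that the bound $l\,\mathcal{E}(\rho^{*,l}, m^{*,l}) \leq \mathcal{A}(\rho^*, m^*)$ gives not only compactness but also the crucial fact $\mathcal{E}(\rho^{*,l}, m^{*,l}) \leq \mathcal{A}(\rho^*, m^*)/l \to 0$, so that any weak cluster point of the family automatically satisfies $\mathcal{E} = 0$, i.e. lies in $\mathcal{C}(\rho^0, \rho^1)$; this observation is what makes the lemma usable in the subsequent passage to the limit $l \to \infty$ and is worth recording even if it is formally slightly beyond the stated claim.
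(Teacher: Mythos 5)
Your proof is correct and follows essentially the same route as the paper: (i) via the supremum of weakly continuous affine functionals, (ii) by the direct method using the nonnegativity of $\mathcal A$ and $\mathcal E$, Lemma \ref{le:upper-bound-on-m} and the weak lower semicontinuity from Remark \ref{rem:conA}, and (iii) from the uniform bound $\mathcal A(\rho^{*,l},m^{*,l})\leq \mathcal A(\rho^*,m^*)$ of Remark \ref{rem:minA-El} combined again with Lemma \ref{le:upper-bound-on-m}. Your closing observation that $l\,\mathcal E(\rho^{*,l},m^{*,l})$ stays bounded, forcing $\mathcal E\to 0$ along the family, is accurate but not part of this lemma; the paper records it separately in the proof of Lemma \ref{le:minmax one bdd}.
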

\begin{proof} (i) As a supremum of continuous linear functionals, $\mathcal{E}$ is convex and weakly lower semi--continuous.

(ii) By Remark \ref{rem:minA-El}, $\mathcal A +l \mathcal E$ is not identically $\infty$ over $A_*.$ Since $\mathcal E\geq 0$, any sub-level subset of $\mathcal A + l \mathcal E$ is a sub-level subset of $\mathcal A$. By  Lemma \ref{le:upper-bound-on-m}, the  sub-level sets of $\mathcal A$ are contained in a bounded subset of $L^2(0,1; \mathbb R^n) \times L^2(0,1; S^{n\times n})$ and so, they are pre--compact. Thus, the sub-level subsets of $\mathcal A + l \mathcal E$ are pre--compact. By Remark \ref{rem:conA} $\mathcal A$ is weakly lower semi--continuous  on $L^2(0,1;\mathbb R^{n})\times L^2(0,1; S^{n\times n})$ and by (i) $\mathcal{E}$ is weakly lower semi--continuous  on that same set. That is all we need to prove that $\mathcal A +l \mathcal E$ achieves its minimum over the closed set $A_*$. 

(iii) By Remark \ref{rem:minA-El}, for any $l>0$,  
\[
\mathcal A(\rho^{*,l},m^{*,l}) \leq \mathcal A(\rho^{*},m^{*}).
\] 
We apply again Lemma \ref{le:upper-bound-on-m} to conclude that $\{ m^{*,l}\; |\; l> 0\}$ is bounded in $L^2$. This is sufficient to verify (iii). 
\end{proof}

\begin{lemma}\label{le:minmax one bdd} Let $e \geq 1$ and let $A_* \in \{A^e, A\}.$ We have 
\[
\inf_{(\rho,m)\in A_*}\sup_{\lambda\in B}\mathcal{L}(\rho,m,\lambda)=\sup_{\lambda \in B}\inf_{(\rho,m)\in A_*}\mathcal{L}(\mathcal{\rho},m,\lambda).
\]
\end{lemma}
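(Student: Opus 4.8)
The plan is to bootstrap from Proposition~\ref{thm:minmax both bdd} (minimax with both sides bounded, i.e. over $A_*$ and $B_l$) to the situation where only the $(\rho,m)$ side is constrained, by letting the radius $l \to \infty$ and showing that on one side the supremum over all of $B$ is achieved in the limit. Concretely, write $V_l := \inf_{(\rho,m)\in A_*}\sup_{\lambda\in B_l}\mathcal L(\rho,m,\lambda)$. By \eqref{eq:property-of-E-0} we have $\sup_{\lambda\in B_l}\mathcal L(\rho,m,\lambda)=\mathcal A(\rho,m)+l\,\mathcal E(\rho,m)$, so $V_l = \inf_{A_*}\bigl(\mathcal A + l\mathcal E\bigr)$, and by Lemma~\ref{lem:E lsc}(ii) this infimum is attained at some $(\rho^{*,l},m^{*,l})$. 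Since $\mathcal E\ge 0$, the sequence $l\mapsto V_l$ is non-decreasing, and by Remark~\ref{rem:minA-El} it is bounded above by $\mathcal A(\rho^*,m^*)=\min_{\mathcal C(\rho^0,\rho^1)}\mathcal A$; hence $V_l \uparrow V_\infty := \sup_l V_l \le \mathcal A(\rho^*,m^*)$.

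The key step is to identify $V_\infty$ with $\min_{\mathcal C(\rho^0,\rho^1)}\mathcal A$. For this I would use Lemma~\ref{lem:E lsc}(iii): the minimizers $(\rho^{*,l},m^{*,l})$ are pre-compact in $A_*$, so along a subsequence $l_k\to\infty$ they converge weakly to some $(\bar\rho,\bar m)\in A_*$. Since $\mathcal A(\rho^{*,l},m^{*,l}) + l\,\mathcal E(\rho^{*,l},m^{*,l}) = V_l \le \mathcal A(\rho^*,m^*)$ and $\mathcal A\ge 0$, we get $\mathcal E(\rho^{*,l},m^{*,l}) \le \mathcal A(\rho^*,m^*)/l \to 0$; by the weak lower semicontinuity of $\mathcal E$ from Lemma~\ref{lem:E lsc}(i), $\mathcal E(\bar\rho,\bar m)=0$, so by \eqref{eq:property-of-E} we have $(\bar\rho,\bar m)\in\mathcal C(\rho^0,\rho^1)$. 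Then weak lower semicontinuity of $\mathcal A$ (Remark~\ref{rem:conA}(ii)) gives $\mathcal A(\bar\rho,\bar m)\le\liminf_k\bigl(\mathcal A(\rho^{*,l_k},m^{*,l_k}) + l_k\mathcal E(\rho^{*,l_k},m^{*,l_k})\bigr) = V_\infty$. Combined with $V_\infty\le\mathcal A(\rho^*,m^*)=\min_{\mathcal C}\mathcal A\le\mathcal A(\bar\rho,\bar m)$, all these are equal, so $V_\infty = \min_{\mathcal C(\rho^0,\rho^1)}\mathcal A$.

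On the right-hand side, the quantity in question is $W := \sup_{\lambda\in B}\inf_{(\rho,m)\in A_*}\mathcal L(\rho,m,\lambda)$. Trivially $W\le \inf_{(\rho,m)\in A_*}\sup_{\lambda\in B}\mathcal L(\rho,m,\lambda)$ (weak duality), and the right-hand side equals $\inf_{A_*}(\mathcal A+\sup_l l\mathcal E) = \inf_{A_*\cap\mathcal C(\rho^0,\rho^1)}\mathcal A = \min_{\mathcal C}\mathcal A = V_\infty$. So it remains to show $W\ge V_\infty$. For each $l$, Proposition~\ref{thm:minmax both bdd} gives $V_l = \sup_{\lambda\in B_l}\inf_{(\rho,m)\in A_*}\mathcal L(\rho,m,\lambda) \le \sup_{\lambda\in B}\inf_{(\rho,m)\in A_*}\mathcal L(\rho,m,\lambda) = W$; letting $l\to\infty$ yields $V_\infty\le W$. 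Chaining the inequalities, $W = V_\infty = \inf_{(\rho,m)\in A_*}\sup_{\lambda\in B}\mathcal L(\rho,m,\lambda)$, which is the claimed identity.

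The main obstacle I anticipate is the passage to the limit in the argument of the second paragraph: one must be careful that $\mathcal E$ is genuinely weakly lower semicontinuous (this is exactly Lemma~\ref{lem:E lsc}(i), phrased as a supremum of weakly continuous linear functionals, so it is in hand) and that the pre-compactness of $\{(\rho^{*,l},m^{*,l})\}$ in $A_*$ (Lemma~\ref{lem:E lsc}(iii)) really delivers a weak limit inside the closed convex set $A_*$ — here the choice $A_*\in\{A,A^e\}$ matters because $A$ has the equality constraint $\sum_i\rho_i=1$ whereas $A^e$ has the inequality $\sum_i\rho_i\le e$; in either case the constraint set is weakly closed, so the limit stays admissible. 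A secondary point is the trivial-looking identity $\inf_{A_*}(\mathcal A + \sup_{l>0} l\mathcal E) = \inf_{A_*\cap\mathcal C}\mathcal A$, which uses \eqref{eq:property-of-E}: when $(\rho,m)\notin\mathcal C$ then $\mathcal E(\rho,m)>0$ forces the inner supremum to $+\infty$, so only paths in $\mathcal C$ contribute — this needs stating but is not hard.
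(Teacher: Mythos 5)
Your proposal is correct and follows essentially the same penalization strategy as the paper: replace $\sup_{B_l}$ by $\mathcal A + l\mathcal E$, minimize via Lemma \ref{lem:E lsc}, pass to the limit using pre-compactness and lower semicontinuity of $\mathcal A$ and $\mathcal E$, and combine Proposition \ref{thm:minmax both bdd} with weak duality. The only cosmetic difference is that you identify $\inf_{A_*}\sup_{B}\mathcal L$ with $\min_{\mathcal C(\rho^0,\rho^1)}\mathcal A$ through the indicator identity $\sup_{B}\mathcal L(\rho,m,\cdot)=\mathcal A(\rho,m)+I_{\mathcal C(\rho^0,\rho^1)}(\rho,m)$ (which the paper defers to the proof of Theorem \ref{th:dual problem}), whereas the paper closes the chain by evaluating $\mathcal L(\rho^\infty,m^\infty,\cdot)\equiv\mathcal A(\rho^\infty,m^\infty)$ at the accumulation point.
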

\begin{proof} We use \eqref{eq:property-of-E-0} and the first identity in \eqref{eq:property-of-E} to obtain  for any $l\geq 1,$ 
\[
\mathcal A (\rho^*,m^*)=\sup_{\lambda\in B_l}\mathcal{L}(\rho^*, m^*,\lambda) \geq \inf_{(\rho,m)\in A_*} \sup_{\lambda\in B_l}\mathcal{L}(\rho, m,\lambda).
\]
This, together with Proposition \ref{thm:minmax both bdd} implies  
\begin{equation}\label{eq:limit}
\mathcal A (\rho^*,m^*) \geq \sup_{\lambda\in B_l}\inf_{(\rho,m)\in A_*}\mathcal{L}(\rho,m,\lambda).
\end{equation}
This means  
\begin{equation}\label{eq:removelimit}
 \mathcal A (\rho^*,m^*)  \geq \mathcal A(\rho^{*,l},m^{*,l})+ l \mathcal{E}(\rho^{*,l},m^{*,l}) \geq \mathcal A(\rho^{*,l},m^{*,l}).
\end{equation}
By Lemma \ref{lem:E lsc} (iii), the second inequality in \eqref{eq:removelimit} yields that the set $\{(\rho^{*,l},m^{*,l})\; | \; l\geq 1 \}$ is pre-compact in $A_*$ and so, its admits a point of accumulation $(\rho^{\infty},m^{\infty})$. Since Lemma \ref{lem:E lsc} (i) ensures that $\mathcal{E}$ is weakly lower semi--continuous, we may divide the expression in \eqref{eq:removelimit} by $l$ and then let $l$ tend to $\infty$ in the subsequent inequality and use the fact that $\mathcal E$ is nonnegative, to obtain  $\mathcal{E}(\rho^{\infty},m^{\infty})=0.$ Thanks to \eqref{eq:property-of-E} we obtain that 
\begin{equation*}
(\rho^{\infty}, m^{\infty}) \in \mathcal C(\rho^0, \rho^1).
\end{equation*}  
By the minimality property of $(\rho^{*},m^{*})$ obtained in Theorem \ref{th:exist}, we have 
\begin{equation}\label{eq:<}
\mathcal A(\rho^*,m^*) \leq \mathcal A(\rho^\infty,m^\infty).
\end{equation}
We let $l$ tend to $\infty$ in \eqref{eq:removelimit} and use the lower semicontinuity property of $\mathcal A$ given in Remark \ref{rem:conA} to reverse the inequality in \eqref{eq:<}. 
In conclusion, 
\begin{equation*}
\mathcal A(\rho^*,m^*) = \mathcal A(\rho^\infty,m^\infty).
\end{equation*}
and
\begin{equation*}
\varliminf_{l\to+\infty}l\mathcal{E}(\rho^{*,l},m^{*,l})=0.
\end{equation*}
To summarize, we have proven that 
\[
\mathcal A(\rho^\infty,m^\infty) \leq \varliminf_{l\to+\infty} \mathcal A(\rho^{*,l},m^{*,l})+   l\mathcal{E}(\rho^{*,l},m^{*,l})= 
\varliminf_{l\to+\infty}  \inf_{(\rho, m) \in  A_*} \sup_{\lambda \in B_l} \mathcal L(\rho, m, \lambda).
\]
We first apply the duality identity in Proposition \ref{thm:minmax both bdd} to interchange $\inf_{A_*} \sup_{B_l}$ and $\sup_{B_l}  \inf_{A_*}$. Then we use the fact that Since $B_l \subset B$ to conclude that  
\begin{equation}\label{eq:nov05.2017.1}
\mathcal A(\rho^\infty,m^\infty) \leq \varliminf_{l\to+\infty} \sup_{\lambda \in B_l} \inf_{(\rho, m) \in A_*}   \mathcal L(\rho, m, \lambda) \leq 
\sup_{\lambda \in B} \inf_{(\rho, m) \in A_*} \mathcal L(\rho, m, \lambda).
\end{equation}
But as  $\mathcal L(\rho^\infty,m^\infty, \cdot) \equiv \mathcal A(\rho^\infty,m^\infty)$ we infer 
\[
\mathcal A(\rho^\infty,m^\infty)= \sup_{\lambda \in B} \mathcal L(\rho^\infty,m^\infty, \lambda) \geq  \inf_{(\rho, m) \in A_*}\sup_{\lambda \in B} \mathcal L(\rho,m, \lambda).
\]
This, together with \eqref{eq:nov05.2017.1} yields, 
\[
\inf_{(\rho, m) \in A_*}\sup_{\lambda \in B} \mathcal L(\rho,m, \lambda) \leq \sup_{\lambda \in B} \inf_{(\rho, m) \in A_*} \mathcal L(\rho, m, \lambda).
\]
The reverse inequality $\sup_{B}  \inf_{A_*}  \leq \inf_{A_*} \sup_{B}$ being always true, we conclude the proof of the lemma. 
\end{proof}
Recall $A^l$ is defined earlier in \eqref{eq:defnA-l}. Set  
\begin{equation*}
A^\infty:= \Bigl\{\rho\in L^2\left(0,1;\mathbb R^{n}\right)\;\; \big| \;\;\rho_i\geq 0\; \forall i=1, \cdots, n\Bigr\} \times L^2\left(0,1; S^{n\times n}\right).
\end{equation*}

\begin{lemma}\label{lem:ALAinfty} Let $\lambda\in B$.  
\begin{enumerate}
\item[(i)] For any $l>0$  
\[
\inf_{(\rho,m)\in A^l}\mathcal{L}(\rho,m,\lambda)=\left(\lambda(1), \rho^1\right)-\left(\lambda(0), \rho^0\right)-l \int_0^1 \bigl(H(\dot \lambda,\nabla_G\lambda) \bigr)_+dt.
\]
\item[(ii)] 
\begin{equation*}
\inf_{(\rho,m)\in A^\infty}\mathcal{L}(\rho,m,\lambda)=
\left\{
\begin{aligned}
&\left(\lambda(1), \rho^1\right)-\left(\lambda(0),\rho^0\right) \; & \text{if}\;\; \bigl(H(\dot \lambda,\nabla_G\lambda) \bigr)_+ \leq 0\,\,\text{a.e. in $(0,1)$}\\
&-\infty &\text{otherwise.}
\end{aligned}
\right.
\end{equation*}
\end{enumerate}
\end{lemma}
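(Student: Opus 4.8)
The plan is to compute the inner infimum $\inf_{(\rho,m)}\mathcal L(\rho,m,\lambda)$ directly, exploiting the fact that for fixed $\lambda$ the functional
\[
\mathcal L(\rho,m,\lambda)=\bigl(\lambda(1),\rho^1\bigr)-\bigl(\lambda(0),\rho^0\bigr)+\int_0^1\Bigl(\tfrac12 F(\rho,m)-(\dot\lambda,\rho)-(m,\nabla_G\lambda)\Bigr)\,dt
\]
splits into a constant plus an integral, and the integral is minimized pointwise in $t$ over the constraint set. For part (i), at each time $t$ one minimizes $\tfrac12 F(\rho,m)-(\dot\lambda(t),\rho)-(m,\nabla_G\lambda(t))$ over the set $\{\rho_i\ge 0,\ \sum_i\rho_i\le l\}\times S^{n\times n}$. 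First I would minimize over $m\in S^{n\times n}$ with $\rho$ fixed: using Remark \ref{re:Legendre-trans} (ii) (or equivalently \eqref{eq:partial-Legendre}), the optimal choice is $m_{ij}=g(\rho_i,\rho_j)(\nabla_G\lambda)_{ij}$, giving $\tfrac12 F(\rho,m)-(m,\nabla_G\lambda)\to -\tfrac12\|\nabla_G\lambda\|_\rho^2$. Hence the pointwise infimum over $m$ equals $-(\dot\lambda,\rho)-\tfrac12\|\nabla_G\lambda\|_\rho^2$. Then I would minimize the remaining expression over $\rho$ in the scaled simplex $\{\rho_i\ge0,\ \sum\rho_i\le l\}$; since $-(\dot\lambda,\rho)-\tfrac12\|\nabla_G\lambda\|_\rho^2 = -\bigl((\dot\lambda,\rho)+\tfrac12\|\nabla_G\lambda\|_\rho^2\bigr)$ and this bracket is exactly the expression whose supremum over $\mathcal P(G)$ defines $H(\dot\lambda,\nabla_G\lambda)$, by $1$-homogeneity of $g$ (assumption (H-iv)) the supremum over the larger set $\{\rho_i\ge0,\ \sum\rho_i\le l\}$ equals $l\,H(\dot\lambda,\nabla_G\lambda)$ when $H(\dot\lambda,\nabla_G\lambda)\ge 0$ and equals $0$ (attained at $\rho=0$) when $H(\dot\lambda,\nabla_G\lambda)<0$; in all cases it equals $l\bigl(H(\dot\lambda,\nabla_G\lambda)\bigr)_+$. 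Therefore the pointwise infimum is $-l\bigl(H(\dot\lambda,\nabla_G\lambda(t))\bigr)_+$, and integrating in $t$ gives the formula in (i).

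A technical point I would address carefully is the interchange of the infimum over the $L^2$-path space $A^l$ with the pointwise minimization in $t$: this is a standard measurable-selection / interchange-of-infimum-and-integral argument (the integrand is a normal convex integrand, so one may minimize under the integral sign), and the pointwise minimizers $m(t)$, $\rho(t)$ constructed above are measurable in $t$ because $\dot\lambda,\nabla_G\lambda$ are, and $g$ is continuous; one also checks the resulting $(\rho,m)$ lies in $L^2$, which follows from the $L^\infty$ bound on the minimizing $\rho$ (bounded by $l$) and the bound $|m_{ij}|\le\max_{[0,1]^2}g\cdot\|\nabla_G\lambda\|_{L^\infty}$ together with $\lambda\in H^1\subset C[0,1]$ so $\|\nabla_G\lambda\|\in L^2$. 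Since this is routine I would not belabor it.

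For part (ii), I would pass to the limit $l\to\infty$ in the formula of part (i), or argue directly: $\inf_{A^\infty}\mathcal L(\rho,m,\lambda)\le\inf_{A^l}\mathcal L(\rho,m,\lambda)$ for every $l$, and conversely $A^\infty=\bigcup_{l>0}A^l$. If $\bigl(H(\dot\lambda,\nabla_G\lambda)\bigr)_+\le 0$ a.e.\ — that is, $H(\dot\lambda,\nabla_G\lambda)\le 0$ a.e.\ — then by part (i) the infimum over every $A^l$ equals $(\lambda(1),\rho^1)-(\lambda(0),\rho^0)$ independently of $l$, hence so is the infimum over $A^\infty$. If instead $\bigl(H(\dot\lambda,\nabla_G\lambda)\bigr)_+>0$ on a set of positive measure, then $\int_0^1\bigl(H(\dot\lambda,\nabla_G\lambda)\bigr)_+dt>0$, so $-l\int_0^1\bigl(H(\dot\lambda,\nabla_G\lambda)\bigr)_+dt\to-\infty$ as $l\to\infty$, giving $\inf_{A^\infty}\mathcal L(\rho,m,\lambda)=-\infty$.

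The main obstacle is not conceptual but bookkeeping: namely, justifying rigorously that the supremum defining $H$ over the simplex scales correctly to the unbounded cone $\{\rho_i\ge 0,\ \sum_i\rho_i\le l\}$, i.e.\ that
\[
\sup_{\rho_i\ge 0,\ \sum\rho_i\le l}\Bigl((a,\rho)+\tfrac12\|b\|_\rho^2\Bigr)=l\,\bigl(H(a,b)\bigr)_+,
\]
which hinges precisely on the $1$-homogeneity (H-iv) of $g$ (so that $\rho\mapsto\|b\|_\rho^2$ is $1$-homogeneous and $(a,\cdot)$ is linear, making the whole objective $1$-homogeneous, hence its sup over a scaled simplex scales linearly and its sup over the cone is $0$ or $+\infty$ accordingly — here it is the truncated linear growth that produces the positive-part), together with the observation that the sup over the \emph{cone} is finite (equal to $l\,H(a,b)$, not $+\infty$) exactly because we cap $\sum\rho_i$ at $l$. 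Once this scaling identity is in place, everything else is a direct substitution.
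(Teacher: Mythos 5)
Your proof is correct and follows essentially the same route as the paper's: minimize over $m$ pointwise via \eqref{eq:partial-Legendre} to obtain $-\tfrac12\|\nabla_G\lambda\|_\rho^2$, then use the $1$-homogeneity (H-iv) of $g$ to show the remaining supremum over the scaled simplex $\{\rho_i\ge0,\ \sum\rho_i\le l\}$ equals $l\,(H(\dot\lambda,\nabla_G\lambda))_+$, and finally send $l\to\infty$ for part (ii). The paper carries out the same two-stage minimization, merely stating the interchange of infimum and integral implicitly; your extra remarks on measurability of the pointwise minimizers and their $L^2$ membership are a harmless elaboration of the same argument.
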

\begin{proof} Expressing the $\inf$ in terms of $-\sup$ and using  \eqref{eq:partial-Legendre} we have 
\begin{equation} \label{eq:nov06.2017.2}
\inf_{m \in L^2} \int_0^1\frac{1}{2}F(\rho,m)-\left(\dot \lambda ,\rho\right)-\left( m,\nabla_G\lambda\right) dt= 
- \int_0^1\Bigl( (\dot \lambda,\rho)+{1\over 2} \| \nabla_G \lambda \|^2_\rho\Bigr) dt.
\end{equation}
Since $g$  is $1$--homogeneous 
\[
\int_0^1\sup_{\sum_{i=1}^n\rho_i \leq l}\Bigl( (\dot \lambda,\rho)+{1\over 2} \| \nabla_G \lambda \|^2_\rho\Bigr) dt=l
\int_0^1\Bigl( H(\dot \lambda,\nabla_G \lambda)\Bigr)_+ dt.
\] 
This, together with \eqref{eq:nov06.2017.2}, proves (i). We let $l$ tend to $\infty$ to verify (ii). 
\end{proof}

\begin{lemma}\label{lem:tran}
Let $\lambda\in H^1\left(0,1;\mathbb R^{n}\right)$ and $\alpha\in H^1(0,1)$ and set $\bar {\lambda}_i=\lambda_i+\alpha$.
Then
\begin{equation*}
H(\dot {\bar \lambda},\nabla_G\bar\lambda)=H(\dot \lambda,\nabla_G\lambda)+\dot \alpha.
\end{equation*} 
\begin{proof} Observe that for any $\rho \in \mathcal P(G)$ we have 
\begin{equation} \label{eq:nov06.2017.3}
(\dot {\bar \lambda} ,\rho)= (\dot \lambda,\rho)+\dot \alpha \quad \text{and} \quad \nabla_G \bar \lambda=\nabla_G \lambda.
\end{equation}
Since 
\[
H(\dot {\bar \lambda},\nabla_G\bar\lambda)=\sup_{\rho\in \mathcal{P}(G)}\Bigl\{(\dot {\bar\lambda},\rho)+\frac{1}{2} \|\nabla_G \bar\lambda\|^2_{\rho}\Bigr\},
\]
we use \eqref{eq:nov06.2017.3}  to conclude the proof. \end{proof}
\end{lemma}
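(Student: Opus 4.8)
The plan is to reduce the statement to two elementary observations about the vertex--independent shift $\bar\lambda_i = \lambda_i + \alpha$. First I would note that the discrete gradient annihilates such a shift: for every $(i,j)\in E$ one has $(\nabla_G\bar\lambda)_{ij} = \sqrt{\omega_{ij}}(\bar\lambda_i-\bar\lambda_j) = \sqrt{\omega_{ij}}(\lambda_i-\lambda_j) = (\nabla_G\lambda)_{ij}$, hence $\nabla_G\bar\lambda = \nabla_G\lambda$ and in particular $\|\nabla_G\bar\lambda\|_\rho = \|\nabla_G\lambda\|_\rho$ for every $\rho\in\mathcal P(G)$. Second, differentiating $\bar\lambda = \lambda + \alpha\,{\bf 1}$ in time gives $\dot{\bar\lambda} = \dot\lambda + \dot\alpha\,{\bf 1}$ as an identity in $L^2(0,1;\mathbb R^n)$, so that for $\mathcal L^1$--a.e.\ $t$ and every $\rho\in\mathcal P(G)$, using $\sum_{i=1}^n\rho_i = 1$, we obtain $(\dot{\bar\lambda},\rho) = (\dot\lambda,\rho) + \dot\alpha$.

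With these two facts in hand I would simply substitute into the definition \eqref{eq:defn-of-H} of $H$. For $\mathcal L^1$--a.e.\ $t\in(0,1)$,
\[
H(\dot{\bar\lambda},\nabla_G\bar\lambda) = \sup_{\rho\in\mathcal P(G)}\Bigl\{(\dot{\bar\lambda},\rho) + \tfrac12\|\nabla_G\bar\lambda\|_\rho^2\Bigr\} = \sup_{\rho\in\mathcal P(G)}\Bigl\{(\dot\lambda,\rho) + \dot\alpha + \tfrac12\|\nabla_G\lambda\|_\rho^2\Bigr\}.
\]
Since at the fixed time $t$ the quantity $\dot\alpha$ is a scalar independent of the optimization variable $\rho$, it factors out of the supremum, leaving $\dot\alpha + H(\dot\lambda,\nabla_G\lambda)$, which is exactly the asserted identity.

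I do not anticipate any genuine difficulty here: the computation is linear and the supremum defining $H$ is taken over the fixed compact simplex $\mathcal P(G)$, so pulling out an additive constant is harmless. The only point that deserves an explicit word is regularity, namely that $\dot\alpha$ and $\dot\lambda$ are a priori only $L^2$ in $t$, so each equality above is to be read as holding for $\mathcal L^1$--almost every $t\in(0,1)$; with that understood the argument is complete.
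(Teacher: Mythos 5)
Your proposal is correct and follows essentially the same route as the paper: both rest on the two observations $\nabla_G\bar\lambda=\nabla_G\lambda$ and $(\dot{\bar\lambda},\rho)=(\dot\lambda,\rho)+\dot\alpha$ (using $\sum_i\rho_i=1$), followed by substitution into the supremum defining $H$. Your extra remark that the identities hold only for $\mathcal L^1$--a.e.\ $t$ is a sensible clarification but not a substantive difference.
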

\begin{proposition}\label{prop:bar lambda} Given $\lambda\in H^1\left(0,1;\mathbb R^{n}\right)$, there is $\bar\lambda\in  H^1\left(0,1;\mathbb R^{n}\right)$ such that $ H (\dot {\bar\lambda},\nabla_G\bar\lambda )_+\equiv 0$ and 
$$\inf_{(\rho,m)\in A^1}\mathcal{L}(\rho,m,\lambda)=\inf_{(\rho,m)\in A^1}\mathcal{L}(\rho,m,\bar\lambda).$$
\end{proposition}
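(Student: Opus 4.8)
The plan is to obtain $\bar\lambda$ from $\lambda$ by adding one and the same scalar function $\alpha\in H^1(0,1)$ to all $n$ components, so that Lemma~\ref{lem:tran} reduces matters to choosing $\dot\alpha$ well. Write $h:=H(\dot\lambda,\nabla_G\lambda)$ for brevity. The first step is to note that $h\in L^2(0,1)$. Indeed, since $g$ is $1$--homogeneous and $g(\rho_i,\rho_j)\le\epsilon_0(g)(\rho_i+\rho_j)\le\epsilon_0(g)$ for $\rho\in\mathcal P(G)$, the definition \eqref{eq:defn-of-H} yields the pointwise bound
\[
-|a|\ \le\ H(a,b)\ \le\ |a|+\tfrac12\,\epsilon_0(g)\,\|b\|^2\qquad\forall\,(a,b)\in\mathbb R^{n}\times S^{n\times n}
\]
(the lower bound by testing with a vertex of $\mathcal P(G)$, the upper bound since $\|b\|_\rho^2\le\epsilon_0(g)\|b\|^2$). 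Evaluating at $(a,b)=(\dot\lambda(t),\nabla_G\lambda(t))$ and using that $\lambda\in H^1(0,1;\mathbb R^{n})$ is continuous on $[0,1]$, so that $\nabla_G\lambda$ is bounded, we get $|h|\le|\dot\lambda|+\tfrac12\epsilon_0(g)\|\nabla_G\lambda\|^2\in L^2(0,1)$; in particular $h_+\in L^2(0,1)$.

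I would then set
\[
\alpha(t):=-\int_0^t h_+(s)\,ds,\qquad \bar\lambda_i:=\lambda_i+\alpha\quad(i=1,\dots,n),
\]
so that $\alpha\in H^1(0,1)$, $\alpha(0)=0$, $\dot\alpha=-h_+$, and $\bar\lambda\in H^1(0,1;\mathbb R^{n})$. By Lemma~\ref{lem:tran},
\[
H(\dot{\bar\lambda},\nabla_G\bar\lambda)=h+\dot\alpha=h-h_+=-h_-\le0\quad\text{a.e. on }(0,1),
\]
hence $\bigl(H(\dot{\bar\lambda},\nabla_G\bar\lambda)\bigr)_+\equiv0$, which is the first assertion. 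For the second, I would invoke Lemma~\ref{lem:ALAinfty}(i) with $l=1$ for $\bar\lambda$: since the positive part just computed vanishes, and since $\sum_i\rho^0_i=\sum_i\rho^1_i=1$ gives $(\bar\lambda(0),\rho^0)=(\lambda(0),\rho^0)$ (because $\alpha(0)=0$) and $(\bar\lambda(1),\rho^1)=(\lambda(1),\rho^1)+\alpha(1)$, that lemma yields
\[
\inf_{(\rho,m)\in A^1}\mathcal{L}(\rho,m,\bar\lambda)=(\lambda(1),\rho^1)-(\lambda(0),\rho^0)+\alpha(1).
\]
Since $\alpha(1)=-\int_0^1 h_+\,dt=-\int_0^1\bigl(H(\dot\lambda,\nabla_G\lambda)\bigr)_+\,dt$, the right-hand side is, again by Lemma~\ref{lem:ALAinfty}(i) applied to $\lambda$ itself, exactly $\inf_{(\rho,m)\in A^1}\mathcal{L}(\rho,m,\lambda)$. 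This proves the proposition.

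The one genuinely delicate point is the choice $\dot\alpha=-h_+$ rather than the more naive $\dot\alpha=-h$ (which would make $H(\dot{\bar\lambda},\nabla_G\bar\lambda)$ vanish identically): truncating to the positive part is precisely what simultaneously renders $H(\dot{\bar\lambda},\nabla_G\bar\lambda)$ nonpositive and produces a boundary increment $\alpha(1)$ that exactly cancels the term $-\int_0^1(H(\dot\lambda,\nabla_G\lambda))_+\,dt$ furnished by Lemma~\ref{lem:ALAinfty}(i), so that the two infima over $A^1$ coincide; with $\dot\alpha=-h$ the boundary term would instead be $-\int_0^1 H(\dot\lambda,\nabla_G\lambda)\,dt$ and the equality of infima would fail unless $h\ge0$ a.e. The remaining ingredient, $H(\dot\lambda,\nabla_G\lambda)\in L^2(0,1)$, needed so that $\bar\lambda$ stays in $H^1$, is routine given the explicit form of $H$ and the embedding $H^1(0,1)\hookrightarrow C([0,1])$.
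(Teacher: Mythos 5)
Your construction is exactly the paper's: since $O:=\{H(\dot\lambda,\nabla_G\lambda)>0\}$ satisfies $\chi_O\,H(\dot\lambda,\nabla_G\lambda)=H(\dot\lambda,\nabla_G\lambda)_+$, your choice $\dot\alpha=-h_+$ is the same $\alpha$ as in the paper, and the remaining steps (Lemma~\ref{lem:tran} to compute $H(\dot{\bar\lambda},\nabla_G\bar\lambda)$, then Lemma~\ref{lem:ALAinfty}(i) applied to both $\bar\lambda$ and $\lambda$) match the paper's proof. Correct, and essentially the same approach.
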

\begin{proof}
Let 
\begin{equation*}
O:=\{t\in (0,1)\,\,|\,\,H\bigl(\dot \lambda,\nabla_G\lambda\bigr)>0\}.
\end{equation*}
If $\mathcal{L}^1(O)=0$, we are done by letting $\bar\lambda=\lambda$. Assume that $\mathcal{L}^1(O)>0$. Set 
\begin{equation*}
\alpha(t)=-\int_0^t\chi_{O}(s)H(\dot \lambda(s),\nabla_G\lambda(s))ds.
\end{equation*}
Since (H-i) holds and $\lambda\in H^1\left(0,1;\mathbb R^{n}\right)$, we have
\[
|\dot \alpha|= |  \chi_{O}H\bigl(\dot \lambda ,\nabla_G\lambda \bigr)|\leq 
\left|\sup_{\rho\in\mathcal{P}(G)}\left\{\bigl(\dot \lambda,\rho\bigr)+\frac{1}{2}\|\nabla_G \lambda\|^2_\rho \right\}\right| 
\leq |\dot \lambda|+ \max_{[0,1]^n} \frac{g}{2}\|\nabla_G \lambda\|^2 \in L^2(0,1).
\]
Thus, $\alpha\in H^1(0,1)$. Set $\bar\lambda_i=\lambda_i+\alpha$. By Lemma \ref{lem:tran}, we obtain 
\[
H(\dot {\bar\lambda} ,\nabla_G\bar\lambda)= H (\dot \lambda,\nabla_G\lambda)+\dot \alpha=(1-\chi_O) H (\dot \lambda,\nabla_G\lambda)\leq 0.
\]
Hence, $H (\dot {\bar \lambda},\nabla_G\bar\lambda)_+\equiv 0$, which verifies the first claim of the proposition.

By Lemma \ref{lem:ALAinfty}  and (i), we infer, 
\begin{equation} \label{eq:nov06.2017.5}
\inf_{(\rho,m)\in A^1}\mathcal{L}(\rho,m,\bar\lambda)= \left(\lambda(1),\rho^1\right)-\left(\lambda(0),\rho^0\right)+\alpha(1)-\alpha(0).
\end{equation}
Since 
\begin{equation} \label{eq:nov06.2017.6}
\alpha(1)-\alpha(0)= -\int_0^1\left(H (\dot \lambda,\nabla_G\lambda)\right)_+dt,
\end{equation}
we use Lemma \ref{lem:ALAinfty}  again and combine \eqref{eq:nov06.2017.5} and \eqref{eq:nov06.2017.6} to verify (ii).
\end{proof}
\begin{proposition}\label{lem:legimply=}
Suppose $\lambda\in H^1\left(0,1;\mathbb R^{n}\right)$ and $H(\dot \lambda,\nabla_G\lambda)\leq 0.$ 
%\begin{equation*}H(\dot \lambda,\nabla_G\lambda)\leq 0.\end{equation*}
Then there exists $\bar\lambda\in H^1\left(0,1;\mathbb R^{n}\right)$ such that $H\left(\dot {\bar\lambda},\nabla_G\bar\lambda\right)= 0$
%\begin{equation*}H\left(\dot {\bar\lambda},\nabla_G\bar\lambda\right)= 0\end{equation*}
and
\begin{equation*}
\left(\bar\lambda(1),\rho^1\right)-\left(\bar\lambda(0), \rho^0\right)\geq\left(\lambda(1),\rho^1\right)-\left(\lambda(0), \rho^0\right).
\end{equation*}
\end{proposition}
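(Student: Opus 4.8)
The plan is to use the function $\bar\lambda$ produced by Proposition \ref{prop:bar lambda} as a first step, and then to add a correction which is constant in $x$ (i.e. a multiple of ${\bf 1}$, realized via an additive scalar function $\beta(t)$) so as to turn the inequality $H\le 0$ into the equality $H=0$ while \emph{not decreasing} the boundary pairing. The key algebraic input is Lemma \ref{lem:tran}: adding $\beta(t)$ to every coordinate of $\lambda$ leaves $\nabla_G\lambda$ unchanged and replaces $H(\dot\lambda,\nabla_G\lambda)$ by $H(\dot\lambda,\nabla_G\lambda)+\dot\beta$. So the whole problem is to choose $\beta\in H^1(0,1)$ with $\dot\beta=-H(\dot\lambda,\nabla_G\lambda)\ge 0$ (a.e.), which forces $\beta$ to be nondecreasing, and then to check that the boundary term does not go down.

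First I would, by Proposition \ref{prop:bar lambda} (applied to the given $\lambda$), replace $\lambda$ by a function --- call it again $\lambda$ --- with $H(\dot\lambda,\nabla_G\lambda)_+\equiv 0$, i.e. $H(\dot\lambda,\nabla_G\lambda)\le 0$ a.e., and with the same value of $\inf_{A^1}\mathcal L(\cdot,\cdot,\lambda)$; combined with Lemma \ref{lem:ALAinfty} this guarantees the boundary pairing is unchanged, so it suffices to work with this new $\lambda$. (Alternatively, one may skip this and work directly with the hypothesis $H(\dot\lambda,\nabla_G\lambda)\le0$.) Next, set
\[
\beta(t):=-\int_0^t H\bigl(\dot\lambda(s),\nabla_G\lambda(s)\bigr)\,ds,
\]
which is well defined and lies in $H^1(0,1)$ by the same $L^2$ bound on $|\dot\beta|=|H(\dot\lambda,\nabla_G\lambda)|$ used in the proof of Proposition \ref{prop:bar lambda} (using (H-i) and $\lambda\in H^1$). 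Then $\dot\beta=-H(\dot\lambda,\nabla_G\lambda)\ge0$, so $\beta$ is nondecreasing with $\beta(0)=0$ and $\beta(1)\ge0$. Define $\bar\lambda_i:=\lambda_i+\beta$. By Lemma \ref{lem:tran}, $H(\dot{\bar\lambda},\nabla_G\bar\lambda)=H(\dot\lambda,\nabla_G\lambda)+\dot\beta=0$ a.e., which is the first claim.

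For the second claim, compute the boundary term: since $\bar\lambda_i=\lambda_i+\beta$ and $\sum_i\rho^0_i=\sum_i\rho^1_i=1$,
\[
\bigl(\bar\lambda(1),\rho^1\bigr)-\bigl(\bar\lambda(0),\rho^0\bigr)
=\bigl(\lambda(1),\rho^1\bigr)-\bigl(\lambda(0),\rho^0\bigr)+\beta(1)-\beta(0)
=\bigl(\lambda(1),\rho^1\bigr)-\bigl(\lambda(0),\rho^0\bigr)+\beta(1),
\]
and $\beta(1)\ge0$ gives the desired inequality. The one subtlety worth flagging is monotonicity in the right direction: because the hypothesis is $H\le0$, $\dot\beta$ has a definite sign, so $\beta(1)-\beta(0)\ge0$ — this is exactly why the correction helps rather than hurts; had the sign gone the other way the argument would fail. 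Apart from that, the steps are routine: measurability/integrability of $H(\dot\lambda,\nabla_G\lambda)$ in $t$ (it is a supremum over the compact set $\mathcal P(G)$ of continuous functions of $(\dot\lambda,\nabla_G\lambda)$, hence measurable, and dominated by $|\dot\lambda|+\tfrac12\max_{[0,1]^n}g\,\|\nabla_G\lambda\|^2\in L^2$), and the application of Lemma \ref{lem:tran}. I do not anticipate a genuine obstacle; the main thing to get right is simply identifying the correct sign so that the additive-scalar modification both kills the strict-negativity of $H$ and increases (or preserves) the endpoint pairing.
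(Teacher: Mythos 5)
Your proposal is correct and follows essentially the same route as the paper: add the scalar function $-\int_0^t H(\dot\lambda,\nabla_G\lambda)\,ds$ to every coordinate, invoke Lemma \ref{lem:tran} to get $H(\dot{\bar\lambda},\nabla_G\bar\lambda)=0$, and use $H\le 0$ together with $\sum_i\rho^0_i=\sum_i\rho^1_i=1$ to see the endpoint pairing does not decrease (the paper writes the correction with the cutoff $\chi_{\{H<0\}}H$, which agrees a.e.\ with $H$ under the hypothesis). The preliminary appeal to Proposition \ref{prop:bar lambda} is unnecessary, as you note, but harmless.
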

\begin{proof} Let $O:=\{ H(\dot \lambda,\nabla_G\lambda)<0\}$ and to avoid trivialities, assume $\mathcal{L}^1(O)>0$. Set 
\begin{equation*}
\alpha(t)=-\int_0^t\chi_{O}H(\dot \lambda,\nabla_G\lambda)ds
\end{equation*}
and $\bar\lambda_i=\lambda_i+\alpha$. As done in the proof of Proposition \ref{prop:bar lambda}, we have $\alpha\in H^1(0,1)$ and by Lemma \ref{lem:tran}, 
\[
H(\dot {\bar \lambda},\nabla_G\bar \lambda)=(1-\chi_O) H(\dot { \lambda},\nabla_G \lambda)=0.
\]
Finally, 
\[
\left(\bar\lambda(1),\rho^1\right)-\left(\bar\lambda(0),\rho^0\right)=\left(\lambda(1),\rho^1\right)-\left(\lambda(0),\rho^0\right)-\int_{O}H(\dot { \lambda},\nabla_G \lambda)dt
\geq \left(\lambda(1),\rho^1\right)-\left(\lambda(0), \rho^0\right).
\]\end{proof}

\begin{corollary}\label{cor:bar lambda} We have 
\begin{equation}\label{eq:minmax}
\sup_{\lambda\in B}\inf_{(\rho,m)\in A^{\infty}}\mathcal{L}(\rho,m,\lambda)=\inf_{(\rho,m)\in A^{\infty}}\sup_{\lambda\in B}\mathcal{L}(\rho,m,\lambda).
\end{equation}
\end{corollary}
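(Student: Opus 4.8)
The plan is to compute both sides of \eqref{eq:minmax} in closed form and then to feed the bounded minimax identity of Lemma \ref{le:minmax one bdd} into the computation. Throughout, set
\[
\mathcal S:=\sup\Bigl\{\bigl(\lambda(1),\rho^1\bigr)-\bigl(\lambda(0),\rho^0\bigr)\ \Big|\ \lambda\in B,\ H(\dot\lambda,\nabla_G\lambda)\le 0\ \text{a.e.}\Bigr\}.
\]
Taking $\lambda\equiv 0$ shows $\mathcal S\ge 0$, and by Proposition \ref{pr:necessary-suff}(i) applied to any feasible $(\rho,m)\in\mathcal C(\rho^0,\rho^1)$ with $\mathcal A(\rho,m)<\infty$ (which exists by Proposition \ref{pro:finite-energy-n-vertices}) we have $\mathcal S<\infty$, so every quantity below is finite and the inequalities chain without trouble.

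\textbf{The left-hand side.} Here I would invoke Lemma \ref{lem:ALAinfty}(ii): for $\lambda\in B$ the value $\inf_{(\rho,m)\in A^\infty}\mathcal L(\rho,m,\lambda)$ equals $(\lambda(1),\rho^1)-(\lambda(0),\rho^0)$ when $\bigl(H(\dot\lambda,\nabla_G\lambda)\bigr)_+\le 0$ a.e.\ and equals $-\infty$ otherwise. Since $\bigl(H(\dot\lambda,\nabla_G\lambda)\bigr)_+\le 0$ a.e.\ means exactly $H(\dot\lambda,\nabla_G\lambda)\le 0$ a.e., taking $\sup_{\lambda\in B}$ yields $\sup_B\inf_{A^\infty}\mathcal L=\mathcal S$.

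\textbf{The right-hand side via the bounded set $A^1$.} The heart of the argument is to show $\inf_{A^1}\sup_B\mathcal L=\mathcal S$. By Lemma \ref{le:minmax one bdd} applied with $e=1$ and $A_*=A^1$, this is equivalent to $\sup_B\inf_{A^1}\mathcal L=\mathcal S$, and Lemma \ref{lem:ALAinfty}(i) with $l=1$ gives
\[
\inf_{(\rho,m)\in A^1}\mathcal L(\rho,m,\lambda)=\bigl(\lambda(1),\rho^1\bigr)-\bigl(\lambda(0),\rho^0\bigr)-\int_0^1\bigl(H(\dot\lambda,\nabla_G\lambda)\bigr)_+\,dt .
\]
When $\lambda$ is admissible in $\mathcal S$ the integral vanishes, so $\sup_B\inf_{A^1}\mathcal L\ge\mathcal S$. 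For the reverse inequality I would take an arbitrary $\lambda\in B$ and apply Proposition \ref{prop:bar lambda}, which produces $\bar\lambda\in B$ with $\bigl(H(\dot{\bar\lambda},\nabla_G\bar\lambda)\bigr)_+\equiv 0$ and $\inf_{A^1}\mathcal L(\rho,m,\lambda)=\inf_{A^1}\mathcal L(\rho,m,\bar\lambda)$; plugging $\bar\lambda$ into the displayed formula shows this common value equals $(\bar\lambda(1),\rho^1)-(\bar\lambda(0),\rho^0)$, which is at most $\mathcal S$ because $\bar\lambda$ satisfies $H(\dot{\bar\lambda},\nabla_G\bar\lambda)\le 0$. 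Hence $\inf_{A^1}\mathcal L(\rho,m,\lambda)\le\mathcal S$ for every $\lambda\in B$, so $\sup_B\inf_{A^1}\mathcal L\le\mathcal S$, and therefore $\inf_{A^1}\sup_B\mathcal L=\mathcal S$.

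\textbf{Conclusion and main obstacle.} To finish, since $A^1\subset A^\infty$ we get $\inf_{A^\infty}\sup_B\mathcal L\le\inf_{A^1}\sup_B\mathcal L=\mathcal S$, while the elementary weak-duality inequality $\sup_B\inf_{A^\infty}\mathcal L\le\inf_{A^\infty}\sup_B\mathcal L$, combined with the computation of the left-hand side, gives $\inf_{A^\infty}\sup_B\mathcal L\ge\mathcal S$. Hence $\inf_{A^\infty}\sup_B\mathcal L=\mathcal S=\sup_B\inf_{A^\infty}\mathcal L$, which is \eqref{eq:minmax}. The step I expect to be the main obstacle is precisely the inequality $\inf_{A^\infty}\sup_B\mathcal L\le\mathcal S$: it is a genuine no-duality-gap statement over the unbounded, non-coercive configuration set $A^\infty$, and the only route I see is to descend to the bounded set $A^1$, on which the compactness-based minimax of Lemma \ref{le:minmax one bdd} is available, using the gauge change $\lambda\mapsto\bar\lambda$ of Proposition \ref{prop:bar lambda} to convert the penalty $\int_0^1\bigl(H(\dot\lambda,\nabla_G\lambda)\bigr)_+\,dt$ into a boundary term without altering $\inf_{A^1}\mathcal L$.
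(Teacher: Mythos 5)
Your proof is correct and follows essentially the same route as the paper: it rests on Lemma \ref{lem:ALAinfty}, the gauge change of Proposition \ref{prop:bar lambda}, and the bounded minimax identity of Lemma \ref{le:minmax one bdd} applied with $A_*=A^1$, then exploits $A^1\subset A^\infty$ together with weak duality. The only (harmless) difference is that you bound $\inf_{A^1}\mathcal L(\cdot,\cdot,\lambda)$ by $\mathcal S$ pointwise in $\lambda$ instead of running the paper's maximizing-sequence argument.
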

\begin{proof}
Since $A^1\subset A^{\infty}$,  
\begin{equation*}
\sup_{\lambda\in B}\inf_{(\rho,m)\in A^{\infty}}\mathcal{L}(\rho,m,\lambda)\leq\sup_{\lambda\in B}\inf_{(\rho,m)\in A^1}\mathcal{L}(\rho,m,\lambda).
\end{equation*}
Define 
\begin{equation*}
\mathcal{L}^*(\lambda)=\inf_{(\rho,m)\in A^1}\mathcal{L}(\rho,m,\lambda).
\end{equation*}
Assume that $\{\lambda_n\}_n\subset B$ is a maximizing sequence such that
\begin{equation}\label{eq:nov03.2017.5.5}
\sup_{\lambda\in B}\mathcal{L}^*(\lambda)=\lim_{n\to+\infty}\mathcal{L}^*(\lambda_n).
\end{equation}
By Proposition \ref{prop:bar lambda}, there exists $\{\bar \lambda_n\}_n\subset B$ such that 
\begin{equation}\label{eq:nov03.2017.6}
\mathcal{L}^*(\bar\lambda_n)=\mathcal{L}^*(\lambda_n) \quad \text{and} \quad \left(H\Bigl(\dot {\bar\lambda}_n,\nabla_G\bar\lambda_n\Bigr)\right)_+=0.
\end{equation}
Combining \eqref{eq:nov03.2017.5.5} and \eqref{eq:nov03.2017.6} and using Lemma \ref{lem:ALAinfty} we obtain 
\begin{eqnarray*} 
\sup_{\lambda\in B}\inf_{(\rho,m)\in A^1}\mathcal{L}(\rho,m,\lambda) 
&=&\lim_{n\to\infty}\mathcal{L}^*(\lambda_n)\\
&=&\lim_{n\to\infty}\left(\bar\lambda_n(1),\rho^1\right)-\left(\bar\lambda_n(0),\rho^0\right)\\
&\leq&\sup_{\lambda\in B}\left\{\left(\lambda(1),\rho^1\right)-\left(\lambda(0),\rho^0\right)\;\; \big| \;\; H(\dot \lambda,\nabla_G\lambda)\leq 0\right\}\\
&=&\sup_{\lambda\in B}\inf_{(\rho,m)\in A^{\infty}}\mathcal{L}(\rho,m,\lambda).
\end{eqnarray*}
Since $A^1 \subset A^{\infty}$, using the duality result in Lemma \ref{le:minmax one bdd}, we have proven that 
\[
 \sup_{\lambda\in B}\inf_{(\rho,m)\in A^{\infty}}\mathcal{L}(\rho,m,\lambda)=\sup_{\lambda\in B}\inf_{(\rho,m)\in A^1}\mathcal{L}(\rho,m,\lambda)=\inf_{(\rho,m)\in A^1}\sup_{\lambda\in B}\mathcal{L}(\rho,m,\lambda).
\]
We exploit once more the fact that $A^1 \subset A^{\infty}$ to infer 
%\begin{equation}\label{<}
\[
 \sup_{\lambda\in B}\inf_{(\rho,m)\in A^{\infty}}\mathcal{L}(\rho,m,\lambda) \geq \inf_{(\rho,m)\in A^\infty}\sup_{\lambda\in B}\mathcal{L}(\rho,m,\lambda).
 \]
%\end{equation}
Since the reverse inequality always holds, we conclude the proof of the Corollary. \end{proof}

We now state the main result of this section. 
\begin{theorem}\label{th:dual problem} We have 
\begin{equation}\label{eq:dual problem}
\min_{(\rho, m) \in \mathcal C(\rho^0, \rho^1)}\left\{\mathcal A(\rho,m)\right\}=
\sup_{\lambda\in B}\left\{\left(\lambda(1),\rho^1\right)-\left(\lambda(0),\rho^0\right)\; | \;H(\dot \lambda ,\nabla_G\lambda)=0\right\}.
\end{equation}
%\begin{eqnarray}\label{eq:dual problem}
%&&\min_{(\rho,m)}\left\{\mathcal A(\rho,m)\; | \; (\rho, m) \in \mathcal C(\rho^0, \rho^1) \cap A\right\}\nonumber\\
%&=&\sup_{\lambda\in B}\left\{\left(\lambda(1),\rho^1\right)-\left(\lambda(0),\rho^0\right)\; | \;H(\dot \lambda ,\nabla_G\lambda)=0\right\}.
%\end{eqnarray}
\end{theorem}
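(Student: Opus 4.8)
The plan is to identify each side of \eqref{eq:dual problem} with one of the two iterated extrema of the Lagrangian $\mathcal L$ over the pair of sets $(A^\infty, B)$, and then to close the gap between them using the minimax identity already recorded in Corollary \ref{cor:bar lambda} together with the ``$H\le 0 \Rightarrow H=0$'' upgrade of Proposition \ref{lem:legimply=}. The inequality $\ge$ in \eqref{eq:dual problem} is in any case contained in Proposition \ref{pr:necessary-suff} (i), so the real content is the reverse inequality, which the chain of equalities below supplies.

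First I would evaluate $\inf_{(\rho,m)\in A^\infty}\sup_{\lambda\in B}\mathcal L(\rho,m,\lambda)$. Since $\mathcal L$ is affine in $\lambda$, \eqref{eq:property-of-E-0} gives $\sup_{\lambda\in B_l}\mathcal L(\rho,m,\lambda)=\mathcal A(\rho,m)+l\,\mathcal E(\rho,m)$ for every $l>0$; letting $l\to\infty$ and invoking \eqref{eq:property-of-E} shows $\sup_{\lambda\in B}\mathcal L(\rho,m,\lambda)$ equals $\mathcal A(\rho,m)$ when $(\rho,m)\in\mathcal C(\rho^0,\rho^1)$ and $+\infty$ otherwise. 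Every element of $\mathcal C(\rho^0,\rho^1)$ belongs to $A^\infty$ (its components $\rho_i$ lie in $[0,1]$, hence are nonnegative, and $\rho\in H^1\subset L^2$, $m\in L^2$), so the outer infimum over $A^\infty$ is $\inf_{\mathcal C(\rho^0,\rho^1)}\mathcal A$, which by Theorem \ref{th:exist} (i) is attained and thus equals the left-hand side of \eqref{eq:dual problem}.

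Next I would evaluate $\sup_{\lambda\in B}\inf_{(\rho,m)\in A^\infty}\mathcal L(\rho,m,\lambda)$. By Lemma \ref{lem:ALAinfty} (ii) the inner infimum equals $(\lambda(1),\rho^1)-(\lambda(0),\rho^0)$ when $\bigl(H(\dot\lambda,\nabla_G\lambda)\bigr)_+\le 0$ a.e. and $-\infty$ otherwise, so this supremum is the supremum of $(\lambda(1),\rho^1)-(\lambda(0),\rho^0)$ over $\lambda\in B$ subject to $H(\dot\lambda,\nabla_G\lambda)\le 0$. Proposition \ref{lem:legimply=} replaces any such $\lambda$ by a $\bar\lambda$ with $H(\dot{\bar\lambda},\nabla_G\bar\lambda)=0$ and a no-smaller objective value, while the constraint $H=0$ is obviously at least as restrictive as $H\le 0$; hence this supremum is exactly the right-hand side of \eqref{eq:dual problem}. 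Feeding the two evaluations into the minimax equality $\inf_{A^\infty}\sup_B\mathcal L=\sup_B\inf_{A^\infty}\mathcal L$ of Corollary \ref{cor:bar lambda} concludes the proof.

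The genuinely delicate work is not in this final assembly but in the lemmas of Section \ref{section5} it rests on: the passage from dual variables confined to the ball $B_l$, where the minimax theorem of \cite{MertensSZ} applies directly (Proposition \ref{thm:minmax both bdd}), to unconstrained $\lambda\in B$, which necessitates the penalization device $\mathcal A+l\mathcal E$ and the compactness argument underlying Lemma \ref{le:minmax one bdd} and Corollary \ref{cor:bar lambda}. In writing up the theorem itself, the only care needed is bookkeeping: keeping the constraint sets $A$, $A^e$, $A^\infty$, $A^1$ straight, correctly handling the $\pm\infty$ values that $\mathcal L$ and $\mathcal A$ may assume, and making sure that the simplex constraint $\sum_i\rho_i=1$ (not merely $\le 1$ or $\ge 0$) is the one enforced by $\sup_{\lambda\in B}$ --- which is precisely the assertion of \eqref{eq:property-of-E}.
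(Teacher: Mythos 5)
Your proposal is correct and follows essentially the same route as the paper: identify $\min_{\mathcal C(\rho^0,\rho^1)}\mathcal A$ with $\inf_{A^\infty}\sup_{B}\mathcal L$ via \eqref{eq:property-of-E-0}--\eqref{eq:property-of-E} (the paper phrases this with the indicator $I_{\mathcal C(\rho^0,\rho^1)}$), swap the extrema by Corollary \ref{cor:bar lambda}, and identify $\sup_B\inf_{A^\infty}\mathcal L$ with the dual supremum using Lemma \ref{lem:ALAinfty} and Proposition \ref{lem:legimply=}. The remarks on $\pm\infty$ bookkeeping and on Proposition \ref{pr:necessary-suff} (i) are consistent with, and add nothing beyond, the paper's argument.
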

\begin{proof} Define  
\begin{equation*}
I_{\mathcal C(\rho^0, \rho^1)}=
\left\{
\begin{aligned}
0 & \quad \text{if}\;\;(\rho, m) \in \mathcal C(\rho^0, \rho^1)\\
\infty & \quad \text{if}\;\;(\rho, m) \in A^\infty \setminus \mathcal C(\rho^0, \rho^1).
\end{aligned}
\right.
\end{equation*}
By \eqref{eq:property-of-E-0} and \eqref{eq:property-of-E}, for any $(\rho, m) \in A^\infty$, we have 
\[
\sup_{\lambda \in B} \mathcal L(\rho, m, \lambda)=\mathcal A(\rho, m)+ I_{\mathcal C(\rho^0, \rho^1)}(\rho, m).
\] 
Thus,  
\begin{equation}\label{eq:dual-new1}
\inf_{(\rho, m) \in A^\infty} \sup_{\lambda \in B} \mathcal L(\rho, m, \lambda)= \inf_{(\rho, m) \in A^\infty}\Bigl\{ \mathcal A(\rho, m)+ I_{\mathcal C(\rho^0, \rho^1)}(\rho, m)\Bigr\}.
\end{equation}
Since $\mathcal C(\rho^0, \rho^1) \subset A^\infty$, exploiting \eqref{eq:dual-new1}, we infer 
\begin{equation}\label{eq:dual-new2}
\min_{(\rho, m) \in \mathcal C(\rho^0, \rho^1)}  \mathcal A(\rho, m)=  \inf_{(\rho, m) \in A^\infty}\Bigl\{ \mathcal A(\rho, m)+ I_{\mathcal C(\rho^0, \rho^1)}(\rho, m)\Bigr\}= \inf_{(\rho, m) \in A^\infty} \sup_{\lambda \in B} \mathcal L(\rho, m, \lambda).
\end{equation}
We first use Corollary \ref{cor:bar lambda} in \eqref{eq:dual-new2} to conclude that   
\[
\min_{(\rho, m) \in \mathcal C(\rho^0, \rho^1)}  \mathcal A(\rho, m)= \sup_{\lambda \in B} \inf_{(\rho, m) \in A^\infty} \mathcal L(\rho, m, \lambda).
\] 
We reach the desired conclusion by noting that in light of Lemma \ref{lem:ALAinfty} and Proposition \ref{lem:legimply=}, the right hand--side of this last identity is nothing but the supremum in \eqref{eq:dual problem}.   \end{proof}

%%%%%%%%%%%%%%%%%%%%%%%%%%%%%%%%%%%%%%%%%%%%%%%%%%%%%%%%%%%
%
%					section
%
%%%%%%%%%%%%%%%%%%%%%%%%%%%%%%%%%%%%%%%%%%%%%%%%%%%%%%%%%%%
\section{Ingredients for Duality in the non-smooth case} \label{section6}
Throughout this section, we further assume  $g$ satisfies \eqref{eq:finite energy}. We will make use of $H_0$, the restriction of the recession function of $H$ (cf. e.g. \cite{buttazzo89} ) to $\mathbb R^n \times \{0\}:$ 
\[
H_0(a)= \sup_{\rho \in \mathcal P(G)} (a; \rho)= \max_{1\leq i\leq n} a_i.
\] 
\begin{lemma}\label{le:recession1} Assume $\nu$ is a non--negative Borel regular measure on $(0,1)$ such that $\nu$ and $\mathcal L^1|_{(0,1)}$ are mutually singular. Let $m \in L^2(0,1; S^{n\times n})$, let  $\beta:(0,1) \rightarrow \mathbb R^n$ be a Borel map  (defined $\mathcal L^1$ a.e.) and let $g:(0,1) \rightarrow \mathbb R^n$ be a Borel map  (defined $\nu$ a.e.).  Then the following assertions are equivalent 
\begin{enumerate}
\item[(i)] 
\begin{equation}\label{eq:recession2}
\begin{cases}
H(\beta,m) & \leq 0 \quad \mathcal L^1 \quad \text{a.e. in $(0,1)$}\\
H_0(g) & \leq 0 \quad \nu \quad \text{a.e. in $(0,1)$.}
\end{cases}
\end{equation} 
\item[(ii)] For every non--negative function $\varphi \in C([0,1]),$ we have $s[\varphi] \leq 0$ if we set 
\[
s[\varphi]:=\sup_{\rho} \biggl\{\int_0^1 \Bigl( (\rho, \beta) dt+\bigl(\rho, g \nu(dt)\bigr) +{1\over 2} \|m\|_\rho^2 dt \Bigr)\varphi(t)\; \Big| \; \rho \in \mathcal C_G \biggr\}.
\]
Here, $\mathcal C_G$ is the set of Borel maps of  $(0,1)$ into $\mathcal P(G).$
\end{enumerate} 
\end{lemma}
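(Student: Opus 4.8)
The plan is to dispatch (i)$\Rightarrow$(ii) directly and to reserve the effort for (ii)$\Rightarrow$(i). For the easy implication: if (i) holds, then for any $\rho\in\mathcal C_G$ and any $\varphi\geq 0$ the definitions of $H$ and $H_0$ give the pointwise bounds $(\rho(t),\beta(t))+\tfrac12\|m(t)\|^2_{\rho(t)}\leq H(\beta(t),m(t))\leq 0$ for $\mathcal L^1$-a.e.\ $t$ and $(\rho(t),g(t))\leq H_0(g(t))\leq 0$ for $\nu$-a.e.\ $t$; multiplying by $\varphi(t)\geq 0$ and integrating against $dt$, respectively $\nu(dt)$, makes the bracket inside $s[\varphi]$ nonpositive for every admissible $\rho$, so $s[\varphi]\leq 0$.

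For (ii)$\Rightarrow$(i) I would first reduce the variational inequality in (ii) to a pointwise one. The function $(t,p)\mapsto (p,\beta(t))+\tfrac12\|m(t)\|_p^2$ is $\mathcal L^1$-measurable in $t$ and continuous (indeed concave, since $g$ is concave) in $p$ on the compact simplex $\mathcal P(G)$, so a standard measurable selection (measurable maximum) argument produces a Borel map $\rho^\sharp\colon(0,1)\to\mathcal P(G)$ realizing $H(\beta,m)$ $\mathcal L^1$-a.e.; similarly, since $H_0(g(t))=\max_i g_i(t)$ is attained at a vertex of $\mathcal P(G)$, there is a Borel vertex-valued selection $q^\sharp$ realizing $H_0(g)$ $\nu$-a.e. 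Choosing a Borel set $N$ with $\mathcal L^1(N)=0$ and $\nu((0,1)\setminus N)=0$ (this is the mutual singularity), I glue $\rho^{\sharp\sharp}:=q^\sharp$ on $N$ and $\rho^{\sharp\sharp}:=\rho^\sharp$ on the complement. Testing (ii) with $\rho^{\sharp\sharp}$ gives the lower bound $s[\varphi]\geq\int_0^1 H(\beta,m)\varphi\,dt+\int_0^1 H_0(g)\varphi\,d\nu$, and the trivial bounds $(\rho,\beta)+\tfrac12\|m\|_\rho^2\leq H(\beta,m)$, $(\rho,g)\leq H_0(g)$ give the reverse, so in fact
\[
s[\varphi]=\int_0^1 H(\beta,m)\,\varphi\,dt+\int_0^1 H_0(g)\,\varphi\,d\nu\qquad\text{for all }\varphi\in C([0,1]),\ \varphi\geq 0,
\]
with the convention that the right-hand side is $+\infty$ when a positive part fails to be integrable. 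Thus (ii) is equivalent to this expression being $\leq 0$ for every such $\varphi$; taking $\varphi\equiv 1$ and using the integrability of $\beta$ and of $g$ against $\nu$ (part of the setting, with $\beta=\dot\lambda^{abs}$ and $g\,\nu=\lambda^{sing}$ in the application) forces both $H(\beta,m)\in L^1(0,1)$ and $H_0(g)\in L^1(\nu)$.

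The crucial step is then to separate the two pointwise conditions, which a continuous test function cannot do naively since it cannot ``see'' the $\mathcal L^1$-null set on which $\nu$ sits; the resolution is to exploit regularity together with the mutual singularity. Suppose $H_0(g)\leq 0$ fails $\nu$-a.e.: by inner regularity of $\nu$ pick $\delta>0$ and a compact $B\subset N$ with $0<\nu(B)<\infty$ and $H_0(g)\geq\delta$ $\nu$-a.e.\ on $B$. Since $\mathcal L^1(B)=0$, outer regularity of $\mathcal L^1$ and of $\nu$ furnishes open sets $W_k\supset B$ with $\mathcal L^1(W_k)\to 0$ and $\nu(W_k\setminus B)\to 0$, and by Urysohn's lemma I take $\varphi_k\in C([0,1])$ with $0\leq\varphi_k\leq 1$, $\varphi_k\equiv 1$ on $B$, $\varphi_k\equiv 0$ outside $W_k$. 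Feeding $\varphi_k$ into the identity above and using $\varphi_k\leq\chi_{W_k}$,
\[
0\;\geq\;\int_0^1 H(\beta,m)\,\varphi_k\,dt+\int_0^1 H_0(g)\,\varphi_k\,d\nu\;\geq\;-\int_{W_k}|H(\beta,m)|\,dt+\delta\,\nu(B)-\int_{W_k\setminus B}|H_0(g)|\,d\nu,
\]
and the first and last terms tend to $0$ as $k\to\infty$ by the $L^1$-integrability secured above and absolute continuity of the integral, leaving $\delta\,\nu(B)\leq 0$, a contradiction. Hence $H_0(g)\leq 0$ $\nu$-a.e.; the proof that $H(\beta,m)\leq 0$ $\mathcal L^1$-a.e.\ is symmetric, isolating instead a compact $A\subset(0,1)\setminus N$ with $\mathcal L^1(A)>0$ and $H(\beta,m)\geq\delta$ on $A$ (so $\nu(A)=0$), enclosing $A$ in opens of small $\mathcal L^1$-measure and small $\nu$-mass, and interchanging the roles of the two measures in the estimate. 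I expect this last separation to be the main obstacle: making one continuous bump capture almost all of one measure while carrying almost none of the other, and controlling the ``overspill'' errors — which is exactly where mutual singularity, the inner/outer regularity of both measures, and the $L^1$-integrability from the reduction step must be used together.
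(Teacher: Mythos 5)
Your argument is correct and follows essentially the same route as the paper: using the mutual singularity of $\nu$ and $\mathcal L^1$ together with a measurable-selection argument, you rewrite $s[\varphi]$ as $\int_0^1 H(\beta,m)\varphi\,dt+\int_0^1 H_0(g)\varphi\,d\nu$, and then read off the two pointwise inequalities from the nonpositivity of this quantity over all nonnegative $\varphi$. The one genuine addition you make is to spell out the final separation step --- Urysohn bumps adapted to the inner/outer regularity of the two mutually singular measures, and the $L^1(dt)$ and $L^1(d\nu)$ integrability of $H(\beta,m)$ and $H_0(g)$ needed to control the overspill terms --- which the paper compresses into the single remark that it again uses mutual singularity.
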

\proof{} Let $\varphi \in C([0,1])$ be non--negative. Since $\nu$ and $\mathcal L^1|_{(0,1)}$ are mutually singular  
\[
s[\varphi]= \sup_{\rho} \biggl\{\int_0^1 \bigl( (\rho, \beta)+{1\over 2} \|m\|_\rho^2 \bigr)\varphi(t)dt\; \Big| \; \rho \in \mathcal C_G\biggr\} +
\sup_{\rho} \biggl\{\int_0^1 \bigl(\rho, g \nu(dt)\bigr)  \varphi(t)\; \big| \; \rho \in \mathcal C_G \biggr\}.
\] 
Thus, 
\[
s[\varphi]=\int_0^1 H( \beta, m)\varphi(t)dt+  \int_0^1 H_0( g)\varphi(t)\nu(dt).
\]
Using again the fact that $\nu$ and $\mathcal L^1|_{(0,1)}$ are mutually singular, we conclude that $s[\varphi] \leq 0$ for all non--negative $\varphi \in C([0,1])$ if and only if \eqref{eq:recession2} holds. \endproof

\begin{remark}\label{re:recession3}  Let $R:[0,1] \rightarrow [0,1]$ be a Lipschitz function and let $h \in L^2(0,1)$ be a monotone non--decreasing function.  Observe that $h \in {\rm BV_{loc}}(0,1)$ and if $h(0^+)>-\infty$ then $h \in {\rm BV}(0, 3/4)$ and if $h(1^-)<+\infty$ then $h \in {\rm BV}(1/4, 1)$. Recall that when $h(0^+)$ is finite, it is the trace of $h$ at $0$ and will simply be denoted as $h(0).$ Similarly, we denote as $h(1)$ the trace of $h$ at $1$ when it exists. When $R(0)=0$, even if $h(0^+)=-\infty$, we interpret $R(0) h(0)$ as $0$. Similarly if $R(1)=0$, even if $h(1^-)=\infty$, in which case, we interpret $R(1) h(1)$ as $0$. We have   
\begin{equation}\label{eq:recession1.5} 
h(1) R(1)-h(0)R(0)= \int_0^1 \dot R h dt +  \int_0^1  R \dot h(dt).
\end{equation}
\end{remark}
\proof{} For each natural number $k$ we define the function $\varphi_k \in W^{1,\infty}_0(0,1)$ as 
\[\varphi_k(t):= 
\begin{cases}
k t &  \quad \text{if} \quad 0 \leq t \leq k^{-1}\\
 1 &  \quad \text{if} \quad {1\over k} < t< 1-k^{-1}\\
k(1-t) & \quad \text{if} \quad 1-k^{-1}<t \leq 1.
\end{cases}
\]
Since $R \varphi_k  \in W^{1,\infty}_0(0,1)$ we have 
\begin{equation}\label{eq:recession4.5} 
0=\int_0^1\Bigl( {d (R\varphi_k)\over dt} h dt + R \varphi_k \dot h(dt)\Bigr)= \int_0^1 \varphi_k \biggl(\dot R  h dt + R\dot h(dt)\biggr) +k\int_0^{1\over k} R h dt -k\int_{1-{1\over k}}^1 R h dt.
\end{equation} 
We use the dominated convergence theorem and then the monotone convergence theorem (since $\dot h$ is a Borel regular measure) to obtain 
\begin{equation}\label{eq:recession4} 
\lim_{k\rightarrow \infty} \int_0^1 \dot R \varphi_k h dt= \int_0^1 \dot R h dt \quad \text{and} \quad \lim_{k\rightarrow \infty} \int_0^1 R \varphi_k \dot h(dt)= \int_0^1 R \dot h(dt).
\end{equation}
Observe that 
\[
\Bigl| \int_0^{1\over k} R h dt- R(0) \int_0^{1\over k} h dt\Bigr| \leq { {\rm Lip}(R)\over k} \int_0^{1\over k} |h| dt, \quad 
\Bigl| \int_{1-{1\over k}}^1 R h dt- R(1) \int_{1-{1\over k}}^1 h dt\Bigr| \leq { {\rm Lip}(R)\over k} \int_{1-{1\over k}}^1 |h| dt.
\]
Since, when we use the above interpretation of $R(0)h(0)$ and $R(1)h(1)$ we have 
\[
 \lim_{k \rightarrow \infty}R(0) k \int_0^{1\over k} h dt= R(0) h(0) \quad \text{and} \quad   \lim_{k \rightarrow \infty} R(1) k\int_{1-{1\over k}}^1 h dt=R(1)h(1)
\]
we conclude that 
\begin{equation}\label{eq:recession5} 
\lim_{k\rightarrow \infty} k  \int_0^{1\over k} R h dt= R(0) h(0) \quad \text{and} \quad   \lim_{k \rightarrow \infty} k\int_{1-{1\over k}}^1 R h dt= R(1)h(1).
\end{equation} 
Combining (\ref{eq:recession4.5}--\ref{eq:recession5}) we verify \eqref{eq:recession1.5}.\endproof

%%%%%%%%%%%%%%%%%%%%%%%%%%%%%%%%%%%%%%%%%%%%%%%%%%%%%
%								     Definition                                                                               %
%%%%%%%%%%%%%%%%%%%%%%%%%%%%%%%%%%%%%%%%%%%%%%%%%%%%%
\begin{definition}\label{de:bv-loc}
Let $ \lambda \in {\rm BV_ {loc}}(0,1; \mathbb R^n)$ such that the distributional derivative $\dot \lambda$ is the sum of an absolutely continuous part $\dot \lambda^{{\rm abs}}\mathcal L^1$ and a singular part (a Borel regular measure) $\dot \lambda^{{\rm sing}}$. Here, $\dot \lambda^{{\rm abs}}:(0,1) \rightarrow (-\infty, 0]^n$ is a Borel function. Choose a non--negative Borel regular measure $\nu$ such that $-\dot \lambda^{{\rm sing}}_i <<\nu$, and $\nu$ and $\mathcal L^1$ are mutually singular. We say that $\lambda$ belongs to ${\rm B}_*$ if 
\begin{equation}\label{eq:HJEabs}
H\left(\dot \lambda^{{\rm abs}} ,\nabla_G\lambda\right)=0\quad \mathcal L^1\quad\text{a.e. in $(0,1)$},
\end{equation}
and 
\begin{equation}\label{eq:HJEsingular}
\max_{i=1\leq i \leq n}\Bigl\{{d\dot \lambda^{{\rm sing}}_i \over d\nu}\Bigr\}= 0,\quad\nu\quad \text{a.e. in $(0,1)$}.
\end{equation}
\end{definition}

\begin{lemma}\label{le:recession7}   Let $\lambda \in L^2(0,1; \mathbb R^{n})$ be such that $\lambda_i$ is monotone non--increasing for any $i \in \{1, \cdots, n\}.$ Let $(\rho, m) \in \mathcal C(\rho^0, \rho^1)$ be such that $\rho$ is Lipschitz, $\rho_i(0)=0$ whenever $\lambda_i\not\in {\rm BV}(0,\, 0.75)$, and $\rho_i(1)=0$ whenever $\lambda_i\not\in {\rm BV}(0.25,\, 1)$. Let $\nu$ be the one in Definition \ref{de:bv-loc}. If 
\begin{equation}\label{eq:recession8}
\begin{cases}
H(\dot \lambda^{abs},\nabla_G \lambda) & \leq 0 \quad \mathcal L^1 \quad \text{a.e. in $(0,1)$}\\
H_0\Bigl({d \dot {\lambda}^{sing} \over d\nu}\Bigr) & \leq 0 \quad \nu \quad \text{a.e. in $(0,1)$,}
\end{cases}
\end{equation} 
then 
\[
\mathcal A(\rho, m) \geq (\lambda(1), \rho^1 )- (\lambda(0), \rho^0 ).
\]
Here, for each $i \in \{1, \cdots, n\}$, we have interpreted $\lambda_i(0) \rho^0_i $ and as $\lambda_i(1) \rho^1_i $ as in Remark \ref{re:recession3}. 
%The inequality is strict unless $\lambda\in {\rm B}_*$ and $m_{ij}=g(\rho_i,\rho_j)(\nabla_G \lambda)_{ij}$ almost everywhere for every $(i,j)\in E$.
\end{lemma}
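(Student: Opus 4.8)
The plan is to follow the template of Proposition~\ref{pr:necessary-suff}(i), replacing the smooth integration by parts in time by the one provided in Remark~\ref{re:recession3}, which is tailored precisely to the possible blow-up of the $\lambda_i$ at $t=0$ or $t=1$. We may assume $\mathcal{A}(\rho,m)<\infty$, since otherwise there is nothing to prove; note that $(\lambda(1),\rho^1)$ and $(\lambda(0),\rho^0)$ are finite real numbers: when $\lambda_i(0^+)=+\infty$ we have $\lambda_i\notin{\rm BV}(0,0.75)$, so $\rho_i^0=0$ and $\lambda_i(0)\,\rho_i^0$ is interpreted as $0$, and otherwise $\lambda_i(0)$ is finite; symmetrically at $t=1$.

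First I would apply Remark~\ref{re:recession3} componentwise with $R=\rho_i$ (Lipschitz, $[0,1]\to[0,1]$) and $h=-\lambda_i$ (monotone non-decreasing and in $L^2(0,1)$), noting that the hypotheses force $\rho_i(0)=0$ on $\{\lambda_i\notin{\rm BV}(0,0.75)\}$ and $\rho_i(1)=0$ on $\{\lambda_i\notin{\rm BV}(0.25,1)\}$, which is exactly what makes the boundary terms legitimate under the conventions of that remark. This yields, for each $i$,
\[
\lambda_i(1)\rho_i(1)-\lambda_i(0)\rho_i(0)=\int_0^1 \dot\rho_i\,\lambda_i\,dt+\int_0^1 \rho_i\,\dot\lambda_i(dt).
\]
Summing over $i$ and splitting $\dot\lambda=\dot\lambda^{abs}\mathcal{L}^1+\dot\lambda^{sing}$ gives
\[
(\lambda(1),\rho^1)-(\lambda(0),\rho^0)=\int_0^1(\dot\rho,\lambda)\,dt+\int_0^1(\rho,\dot\lambda^{abs})\,dt+\int_0^1(\rho,\dot\lambda^{sing}(dt)),
\]
in which all three integrals are finite and the last two non-positive (each integrand of the last two is sign-definite and their sum is the finite quantity just produced).

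Next I would handle the three terms separately. Since $\rho$ is Lipschitz, $\dot\rho=-\divG(m)$ a.e., and by skew-symmetry of $m$ and symmetry of $\omega$ one has $-(\divG(m),\lambda)=(m,\nabla_G\lambda)$, so $\int_0^1(\dot\rho,\lambda)\,dt=\int_0^1(m,\nabla_G\lambda)\,dt$. Because $\mathcal{A}(\rho,m)<\infty$, Remark~\ref{rem:conA}(i) gives $m_{ij}(t)=0$ whenever $g_{ij}(\rho(t))=0$ for a.e.\ $t$, so Remark~\ref{re:Legendre-trans}(ii) applies pointwise: $(m,\nabla_G\lambda)\le \frac12 F(\rho,m)+\frac12\|\nabla_G\lambda\|_\rho^2$ a.e. Moreover $\frac12\|\nabla_G\lambda\|_\rho^2+(\rho,\dot\lambda^{abs})\le H(\dot\lambda^{abs},\nabla_G\lambda)\le 0$ a.e., using the definition of $H$, $\rho(t)\in\mathcal{P}(G)$ and \eqref{eq:recession8}; hence $(m,\nabla_G\lambda)+(\rho,\dot\lambda^{abs})\le \frac12 F(\rho,m)$ a.e., and integrating gives $\int_0^1\bigl[(\dot\rho,\lambda)+(\rho,\dot\lambda^{abs})\bigr]dt\le\mathcal{A}(\rho,m)$. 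For the singular part, since $-\dot\lambda^{sing}_i\ll\nu$ we may write $\dot\lambda^{sing}=\frac{d\dot\lambda^{sing}}{d\nu}\,\nu$, and $\rho(t)\in\mathcal{P}(G)$ yields $\bigl(\rho,\frac{d\dot\lambda^{sing}}{d\nu}\bigr)\le\max_i\frac{d\dot\lambda^{sing}_i}{d\nu}=H_0\bigl(\frac{d\dot\lambda^{sing}}{d\nu}\bigr)\le 0$ $\nu$-a.e.\ by \eqref{eq:recession8}, so $\int_0^1(\rho,\dot\lambda^{sing}(dt))\le 0$. Adding the three estimates gives the claim.

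The only genuine obstacle is justifying the endpoint integration by parts when $\lambda_i$ is unbounded at $0$ or $1$: this is precisely the point of Remark~\ref{re:recession3} and of its conventions for $R(0)h(0)$ and $R(1)h(1)$, and it is why the hypotheses couple the vanishing of $\rho_i$ at an endpoint to the failure of $\lambda_i$ to be ${\rm BV}$ near that endpoint. The secondary bookkeeping — finiteness of the three integrals (so that the decomposition and the termwise integration are legitimate), $\|\nabla_G\lambda\|_\rho^2\in L^1(0,1)$, and the a.e.\ applicability of Remark~\ref{re:Legendre-trans}(ii) — follows routinely from $\mathcal{A}(\rho,m)<\infty$, $\lambda\in L^2(0,1;\mathbb{R}^n)$ and the boundedness of $g$ on $[0,1]^2$.
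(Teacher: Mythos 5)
Your proof is correct and follows essentially the same route as the paper's: split $\dot\lambda$ into its absolutely continuous and singular parts, use Remark~\ref{re:recession3} componentwise (with $R=\rho_i$, $h=-\lambda_i$) to justify the endpoint integration by parts under the stated BV/vanishing coupling, use the definition of $H$ and $H_0$ together with \eqref{eq:recession8} to control $(\rho,\dot\lambda^{abs})+\tfrac12\|\nabla_G\lambda\|_\rho^2$ and $(\rho,\tfrac{d\dot\lambda^{sing}}{d\nu})$, and then invoke Remark~\ref{re:Legendre-trans}(ii) plus the continuity equation. The paper phrases the chain starting from $\mathcal A(\rho,m)$ and only applying the spatial integration by parts at the end, whereas you start from the boundary expression and bound each term; this is a trivial reordering of the same estimates and makes no mathematical difference.
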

\proof{} To avoid trivialities, we assume that $\mathcal A(\rho, m)<\infty$, in which case for $\mathcal L^1$ almost every $t \in (0,1),$ $m_{ij}(t)=0$ if $g(\rho_i, \rho_j)(t)=0$. By Remark \ref{re:Legendre-trans} (ii) we have 
\begin{equation}\label{eq:recession9}
F(\rho, m)+\|b\|^2_\rho > 2 (m, b)
\end{equation} 
unless $m_{ij}= g(\rho_i, \rho_j) b_{ij}$ for all $(i, j) \in E.$ 
We have
\begin{eqnarray}
\int_0^1 {1\over 2}F(\rho, m)dt &\geq &\int_0^1 \biggl({1\over 2} F(\rho, m)dt +H(\dot \lambda^{abs}, \nabla_G \lambda)dt + H_0\Bigl({d \dot\lambda^{sing} \over d\nu}\Bigr)d\nu \biggr)\nonumber\\ 
&\geq & \int_0^1 \biggl({1\over 2} F(\rho, m)dt + (\dot \lambda^{abs}, \rho)dt +{1\over 2} \|\nabla_G \lambda\|^2_\rho dt+  \Bigl({d \dot\lambda^{sing} \over d\nu}, \rho\Bigr) d\nu \biggr)\nonumber\\
&= & \int_0^1 \biggl({1\over 2} F(\rho, m)dt + (\dot \lambda(dt), \rho)+{1\over 2} \|\nabla_G \lambda\|^2_\rho dt  \biggr)\label{eq:halfdualproblem}.
\end{eqnarray} 
We use Remark \ref{re:recession3}, then the fact that $(\rho, m) \in \mathcal C(\rho^0, \rho^1),$  to  obtain after integrating by parts,
\[
 \int_0^1 (\dot \lambda(dt), \rho) = -  \int_0^1 (\lambda, \dot \rho) dt + (\lambda(1), \rho^1)-(\lambda(0), \rho^0)=  \int_0^1 (\lambda, \nabla_G\cdot m) dt +  \bigl(\lambda(1), \rho^1\bigr)-\bigl(\lambda(0), \rho^0\bigr).
\]
This, together with \eqref{eq:halfdualproblem} yields 
\[
\mathcal A(\rho, m) \geq  \int_0^1 \biggl({1\over 2} F(\rho, m) -(\nabla_G \lambda,  m) dt +{1\over 2} \|\nabla_G \lambda\|^2_\rho  \biggr)dt + 
 \bigl(\lambda(1), \rho^1\bigr)-\bigl(\lambda(0), \rho^0\bigr).
\]
Thanks to \eqref{eq:recession9}, we reach the desired conclusions. \endproof

\begin{remark}\label{re:necessary-sufficient} Let $(\rho, m)$ and $\lambda$ be as in Lemma \ref{le:recession7}. 
\begin{enumerate}
\item[(i)] A necessary and sufficient condition to have $\mathcal A(\rho, m)=  \bigl(\lambda(1), \rho^1\bigr)-\bigl(\lambda(0), \rho^0\bigr)$ is that 
\begin{equation}\label{main1} (a) \quad m_{ij}= g(\rho_i, \rho_j) \bigl(\nabla_G\lambda\bigr)_{ij} \quad \mathcal{L}^1 \quad \text {a.e in $(0,1)$ for all} \quad  (i, j) \in E.
\end{equation}
\begin{equation}\label{main2} (b) \quad 
0=H(\dot \lambda^{abs}, \nabla_G \lambda)=  (\dot \lambda^{abs}, \rho)+ {1\over 2} \|\nabla_G \lambda\|^2_\rho \quad \mathcal L^1 \quad \text{a.e. in $(0,1)$},
\end{equation}
\begin{equation}\label{main3} 
0=H_0\Bigl({d \dot\lambda^{sing} \over d\nu}\Bigr)=\Bigl({d \dot\lambda^{sing} \over d\nu}, \rho\Bigr) \quad \nu\quad \text{a.e. in $(0,1)$.} 
\end{equation} 
\item[(ii)] For any $0\leq s < t \leq 1$ we have 
\[
(\lambda(t), \rho(t))=\min_{p \in \mathcal P(G)} \Bigl\{ (\lambda(0), p)+{1 \over 2(t-s)} \mathcal W_g^2(\rho(t), p)\Bigr\}
\]
\end{enumerate} 
\end{remark}

%%%%%%%%%%%%%%%%%%%%%%%%%%%%%%%%%%%%%%%%%%%%%%%%%%%%%%%%%%%
%
%					section
%
%%%%%%%%%%%%%%%%%%%%%%%%%%%%%%%%%%%%%%%%%%%%%%%%%%%%%%%%%%%
\section{Characterization of geodesics and extended Hamilton Jacobi Equations}\label{section7}
Throughout this section, we assume  \eqref{eq:finite energy} hold and characterize the minimizers of \eqref{eq:dual problem}. 

\begin{lemma}\label{lem:gral2bdd} Let $\lambda\in H^1\left(0,1;\mathbb R^{n}\right)$ be such that  
\begin{equation}\label{eq:HJE}
H\left(\dot \lambda,\nabla_G\lambda\right)=0\,\,\text{a.e. in $(0,1)$}.
\end{equation}
\begin{enumerate}
\item[(i)] We have $\dot \lambda_i\leq 0$ a.e. in $(0,1)$ for any $i\in V$.
\item[(ii)] If we further assume that $\gamma_P(\rho^0)$, $\gamma_P(\rho^1)>0$ and $C$ a constant  such that
\begin{equation*}
\left(\lambda(1),\rho^1\right)-\left(\lambda(0),\rho^0\right)\geq C
\end{equation*}
then there exists $C_0$ depending on $C$, $\gamma_P(\rho^0)$, $\gamma_P(\rho^1)$, $w$, $n$ such that $\|\nabla_G\lambda\|_{L^2(0,1)}\leq C_0$.
\end{enumerate} 
\end{lemma}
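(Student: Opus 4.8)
The plan is as follows. For part (i), observe that in \eqref{eq:defn-of-H} the supremum over $\mathcal P(G)$ dominates the value at the unit mass $e_i$ concentrated at vertex $i$, and since $\|b\|_{e_i}^2\ge 0$ this gives $H(a,b)\ge (a,e_i)+\tfrac12\|b\|_{e_i}^2\ge a_i$ for every $i\in V$ and every $(a,b)$; applying this with $(a,b)=(\dot\lambda(t),\nabla_G\lambda(t))$ and using \eqref{eq:HJE} yields $\dot\lambda_i(t)\le 0$ for a.e.\ $t$ and all $i$, so in particular each $\lambda_i$ is non-increasing on $[0,1]$. For part (ii), set $a(t):=\tfrac1n\sum_j\lambda_j(t)$ and $\tilde\lambda(t):=\lambda(t)-a(t){\bf 1}$, so that $\nabla_G\tilde\lambda=\nabla_G\lambda$, $\sum_i\tilde\lambda_i\equiv 0$, and $(\lambda(s),\rho^1-\rho^0)=(\tilde\lambda(s),\rho^1-\rho^0)$ for every $s$; the goal reduces to bounding $I:=\int_0^1\|\tilde\lambda(t)\|^2\,dt$, since $\|\nabla_G\tilde\lambda(t)\|^2=\tfrac12\sum_{(i,j)\in E}\omega_{ij}(\tilde\lambda_i-\tilde\lambda_j)^2\le c_{n,\omega}\|\tilde\lambda(t)\|^2$ for a constant $c_{n,\omega}$ depending only on $n$ and $\omega$.

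The key step is to test \eqref{eq:HJE} against the two constant curves $\rho\equiv\rho^0$ and $\rho\equiv\rho^1$, which gives $(\dot\lambda(t),\rho^k)+\tfrac12\|\nabla_G\lambda(t)\|_{\rho^k}^2\le 0$ a.e.\ for $k=0,1$, and then to integrate the $k=0$ inequality over $(0,s)$ and the $k=1$ inequality over $(s,1)$. Subtracting, and inserting the hypothesis $(\lambda(1),\rho^1)-(\lambda(0),\rho^0)\ge C$, gives for every $s\in[0,1]$
\[
(\tilde\lambda(s),\rho^1-\rho^0)\ \ge\ C+\tfrac12\Bigl(\int_0^s\|\nabla_G\lambda(t)\|_{\rho^0}^2\,dt+\int_s^1\|\nabla_G\lambda(t)\|_{\rho^1}^2\,dt\Bigr).
\]
By Lemma \ref{le:Poincare-inequality}, $\|\nabla_G\lambda(t)\|_{\rho^k}^2=\|\nabla_G\tilde\lambda(t)\|_{\rho^k}^2\ge\gamma_P(\rho^k)\|\tilde\lambda(t)\|^2$, so with $\gamma_*:=\min\{\gamma_P(\rho^0),\gamma_P(\rho^1)\}>0$ the bracket is $\ge\gamma_*I$, while Cauchy--Schwarz together with $\|\rho^1-\rho^0\|\le\sqrt2$ bounds the left side by $\sqrt2\,\|\tilde\lambda(s)\|$. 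Hence $\sqrt2\,\|\tilde\lambda(s)\|\ge C+\tfrac{\gamma_*}{2}I$ for every $s\in[0,1]$. If the right side is $\le 0$ this gives $I\le 2|C|/\gamma_*$; otherwise, squaring and integrating in $s$ yields $2I\ge\bigl(C+\tfrac{\gamma_*}{2}I\bigr)^2$, a quadratic inequality for $I$ with positive leading coefficient $\gamma_*^2/4$, which forces $I\le C_1(C,\gamma_*)$ for an explicit constant. In all cases $I\le C_1(C,\gamma_*)$, and therefore $\|\nabla_G\lambda\|_{L^2(0,1)}^2=\int_0^1\|\nabla_G\tilde\lambda(t)\|^2\,dt\le c_{n,\omega}I\le c_{n,\omega}C_1(C,\gamma_*)=:C_0^2$, with the claimed dependence on $C$, $\gamma_P(\rho^0)$, $\gamma_P(\rho^1)$, $\omega$ and $n$.

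The manipulations with $\tilde\lambda$, the integrations, and solving the final quadratic are routine. The point that must be gotten right — and the genuine obstacle — is that \eqref{eq:HJE} has to be tested against $\rho^0$ and $\rho^1$ on the sub-intervals $(0,s)$ and $(s,1)$, not merely on all of $(0,1)$: integrating over the whole interval produces lower bounds on $\|\tilde\lambda\|$ only at the endpoints $s\in\{0,1\}$, and these do not close the argument because $I$ is an average of $\|\tilde\lambda(s)\|^2$ over all of $(0,1)$. Once the estimate $\sqrt2\,\|\tilde\lambda(s)\|\ge C+\tfrac{\gamma_*}{2}I$ is available for every $s$, the self-consistency between $I$ and $\int_0^1\|\tilde\lambda(s)\|^2\,ds$ finishes the proof.
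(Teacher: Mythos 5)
Your proof is correct, and part (ii) follows a genuinely different route from the paper's. For (i) you argue exactly as the paper does (test $H$ at the Dirac measures). For (ii), the paper builds an explicit competitor pair: the linear interpolation $\tilde\rho(t)=(1-t)\rho^0+t\rho^1$ together with a vector field $\tilde m$ obtained from Remark \ref{re:extension} at the uniform measure, writes $(\lambda(1),\rho^1)-(\lambda(0),\rho^0)=\int_0^1\bigl((\dot\lambda,\tilde\rho)+(\lambda,\dot{\tilde\rho})\bigr)dt$, uses the continuity equation, the inequality $(\dot\lambda,\tilde\rho)+\tfrac12\|\nabla_G\lambda\|^2_{\tilde\rho}\le 0$ along the segment, the concavity of $\gamma_P$ (Lemma \ref{lem:concave}) to get the uniform Poincar\'e constant $\epsilon_1$ at every $\tilde\rho(t)$, and then absorbs the linear term $\int(\tilde\lambda,\mathrm{div}_G\tilde m)dt$ into $\tfrac{\epsilon_1}{2}\int\|\tilde\lambda\|^2dt$ by Young's inequality. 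You instead test \eqref{eq:HJE} against the two constant measures $\rho^0$ on $(0,s)$ and $\rho^1$ on $(s,1)$ for every $s$, invoke Lemma \ref{le:Poincare-inequality} only at the endpoints (so you need neither the concavity of $\gamma_P$ nor the construction of $\tilde m$), and close with the self-consistent quadratic inequality $2I\ge\bigl(C+\tfrac{\gamma_*}{2}I\bigr)^2$ for $I=\int_0^1\|\tilde\lambda\|^2dt$; all the intermediate steps (the integration over sub-intervals, $(\lambda(s),\rho^1-\rho^0)=(\tilde\lambda(s),\rho^1-\rho^0)$, $\nabla_G\tilde\lambda=\nabla_G\lambda$, and the final comparison $\|\nabla_G\lambda\|^2\le c_{n,\omega}\|\tilde\lambda\|^2$) check out, and the constant has the claimed dependence. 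What each approach buys: yours is more elementary and yields an explicit bound under the bare hypothesis $\gamma_P(\rho^0),\gamma_P(\rho^1)>0$; the paper's pays the price of setting up $(\tilde\rho,\tilde m)$ but reuses exactly that pair and the inequality \eqref{eq:nov03.2017.9} in the proof of Lemma \ref{lem:L2bdd}, so in the paper's economy the extra construction is not wasted.
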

\begin{proof} (i) Since \eqref{eq:HJE} holds for $\lambda$ a.e. on $(0,1)$
\begin{equation}\label{eq:nov03.2017.7}
( \dot \lambda ,\rho)+\frac{1}{2}\|\nabla_G \lambda\|^2_\rho \leq 0\,\,\text{for any $\rho\in \mathcal{P}(G)$}.
\end{equation}
For $i \in \{1, \cdots, n\}$ fixed, we set $\rho_j=\delta_{ij}$ to discover that  (i) holds.

(ii) Let 
\begin{equation}\label{eq:nov03.2017.7new} 
\tilde\rho(t):=(1-t)\rho^0+t\rho^1 \in \mathcal C(\rho^0, \rho^1).
\end{equation} 
Let $\bar\rho:=(1/n, \cdots,1/n)\in \mathcal{P}_0(G)$. By Remark \ref{re:extension}, there is a unique $l_{\bar\rho}(\rho^1-\rho^0)$ such that
\[
\dot {\tilde\rho}=-\textrm{div}_{\bar\rho}\left(l_{\bar\rho}(\rho^1-\rho^0)\right)=-g\left(\frac{1}{n},\frac{1}{n}\right)\textrm{div}_G\left(l_{\bar\rho}(\rho^1-\rho^0)\right)\quad \text{and} \quad l_{\bar\rho}(\rho^1-\rho^0)\in T_{\bar\rho}\mathcal{P}(G).
\]
Note 
\begin{equation}\label{eq:nov03.2017.7new2} 
\tilde m := g({1\over n},{1\over n}) l_{\bar\rho}(\rho^1-\rho^0)\in L^2(0,1; S^{n\times n})
\end{equation}  
satisfies 
\begin{equation}\label{eq:nov03.2017.8}
\dot {\tilde\rho}+\textrm{div}_G(\tilde m)=0.
\end{equation}
Using Lemma \ref{lem:concave}, we have
\begin{equation}\label{eq:nov03.2017:8.5}
\gamma_P(\tilde\rho(t)) \geq (1-t)\gamma_P(\rho^0)+t\gamma_P(\rho^1)\geq\min\{\gamma_P(\rho^0), \gamma_P(\rho^1)\}=:\epsilon_1.
\end{equation}
Using  \eqref{eq:nov03.2017.8} we obtain 
\begin{equation}\label{eq:nov03.2017.9}
C\leq (\lambda(1),\rho^1)- (\lambda(0),\rho^0 )=\int_0^1  (\dot \lambda ,\tilde\rho)+ (\lambda,\dot {\tilde \rho}) dt=\int_0^1   (\dot \lambda ,\tilde\rho)- (\lambda, \textrm{div}_G(\tilde m)) dt.
\end{equation}
Setting $\tilde \lambda_i:= \lambda_i -1/n\sum_{j=1}^n \lambda_j$,  using Lemma \ref{le:Poincare-inequality},  we observe that \eqref{eq:nov03.2017.7}, together with \eqref{eq:nov03.2017.9}  implies 
\begin{eqnarray*}
C\leq \int_0^1\biggl(-\frac{1}{2}\|\nabla_G \tilde \lambda\|^2_{\tilde \rho} - (\tilde \lambda, \textrm{div}_G(\tilde m))\biggr) dt
&\leq &  \int_0^1\biggl(-{\gamma_P(\tilde\rho)\over 2}\|\tilde \lambda\|^2 - (\tilde \lambda, \textrm{div}_G(\tilde m)) \biggr) dt\\
&\leq &\int_0^1\biggl(-{\epsilon_1\over 2}\|\tilde \lambda\|^2 - (\tilde \lambda, \textrm{div}_G(\tilde m)) \biggr) dt.
\end{eqnarray*} 
This reads off 
\[
{\epsilon_1\over 2}    \int_0^1 \|\tilde \lambda\|^2 dt + \int_0^1 (\tilde \lambda, \textrm{div}_G(\tilde m)) dt \leq -C.
\]
Therefore,  there exists a constant $C_1$ depending only on $C$, $\epsilon_1$ and $\int_0^1\|\textrm{div}_G(\tilde m)\|^2 dt$ such that  
\[
 \int_0^1 \|\tilde \lambda\|^2 dt \leq C_1.
\]
Hence, there exists $C_0$ depending on $C$, $\gamma_P(\rho^0)$, $\gamma_P(\rho^1)$, $w$ and $n$ such that 
\[
 \int_0^1 \|\nabla_G \tilde \lambda\|^2 dt \leq C_0.
\]
Since $\nabla_G \tilde \lambda=\nabla_G \lambda$ we conclude the proof. 
\end{proof}

\begin{remark}\label{rem:L2bdd} Since $\rho^0, \rho^1 \in \mathcal P(G)$, there are  $i, j \in \{1, \cdots, n\}$ such that $\rho^0_i, \rho^1_j>0.$ The following lemma draws consequence from these facts.
\end{remark}

\begin{lemma}\label{lem:L2bdd}
Assume $\gamma_P(\rho^0), \gamma_P(\rho^1)>0$ and $\lambda$ is as in Lemma \ref{lem:gral2bdd} and satisfies (ii) of that lemma.  Then there exists some constant $C_1$ depending on $C$, $\gamma_P(\rho^0)$, $\gamma_P(\rho^1)$, $w$, $n$ such that
\begin{itemize}
\item [(i)] if $\rho_i^0\geq \epsilon>0$ for some $i\in V$, $\delta \|\dot \lambda_i\|_{L^1(0,1-\delta)} \leq C_1 \epsilon^{-1}$ for any $0<\delta<1$;
\item [(ii)] if $\rho_i^1\geq \epsilon>0$ for some $i\in V$, $\delta \|\dot \lambda_i\|_{L^1(\delta,1)}  \leq C_1 \epsilon^{-1}$ for any $0<\delta<1$;
\item [(iii)] if $\rho_i^0,\rho_i^1\geq \epsilon>0$ for some $i\in V$, $\|\dot \lambda_i\|_{L^1(0,1)} \leq C_1 \epsilon^{-1}$;
\item [(iv)] $\|\lambda\|_{L^2(0,1)}\leq C_1$.
\end{itemize}
\end{lemma}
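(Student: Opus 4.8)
The plan is to derive all four bounds from a single integral inequality, obtained by inserting into the Hamilton--Jacobi identity \eqref{eq:HJE} the straight segment from $\rho^0$ to $\rho^1$. Two facts are reused from Lemma~\ref{lem:gral2bdd}: by its part~(i), $\dot\lambda_i\le 0$ a.e., so each $\lambda_i$ is non-increasing and
\[
\|\dot\lambda_i\|_{L^1(a,b)}=\int_a^b(-\dot\lambda_i)\,dt=\lambda_i(a)-\lambda_i(b)\ge 0;
\]
and the proof of its part~(ii) yields the bound $\int_0^1\|\tilde\lambda\|^2\,dt\le K$, where $\tilde\lambda_i:=\lambda_i-\tfrac1n\sum_{j=1}^n\lambda_j$ and $K$ depends only on $C,\gamma_P(\rho^0),\gamma_P(\rho^1),w,n$.

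Here is the core step. Set $\tilde\rho(t):=(1-t)\rho^0+t\rho^1$; then $\tilde\rho(t)\in\mathcal{P}(G)$ for every $t\in[0,1]$ and $\dot{\tilde\rho}\equiv\rho^1-\rho^0$. Evaluating \eqref{eq:HJE} at $\rho=\tilde\rho(t)$ gives, for a.e.\ $t$, $(\dot\lambda(t),\tilde\rho(t))+\tfrac12\|\nabla_G\lambda(t)\|_{\tilde\rho(t)}^2\le 0$, hence $(\dot\lambda(t),\tilde\rho(t))\le 0$. Integrating over $(0,1)$ and integrating by parts (both $\lambda$ and $\tilde\rho$ lie in $H^1$),
\[
\int_0^1(\dot\lambda,\tilde\rho)\,dt=(\lambda(1),\rho^1)-(\lambda(0),\rho^0)-\int_0^1(\lambda,\rho^1-\rho^0)\,dt .
\]
Since $\sum_i(\rho^1_i-\rho^0_i)=0$, the last integral is unchanged if $\lambda$ is replaced by $\tilde\lambda$, and Cauchy--Schwarz in $t$ together with $\|\rho^1-\rho^0\|\le\sqrt2$ bounds its absolute value by $\sqrt{2K}$. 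Feeding in $\int_0^1(\dot\lambda,\tilde\rho)\,dt\le 0$ and the hypothesis $(\lambda(1),\rho^1)-(\lambda(0),\rho^0)\ge C$ yields
\[
\sum_{i=1}^n\int_0^1(-\dot\lambda_i)\,\tilde\rho_i\,dt=-\int_0^1(\dot\lambda,\tilde\rho)\,dt\le\sqrt{2K}-C=:\bar C ,
\]
and, every integrand being non-negative, $\int_0^1(-\dot\lambda_i)\tilde\rho_i\,dt\le\bar C$ for each $i$.

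Now (i)--(iii) follow by bounding $\tilde\rho_i$ below on the relevant interval. If $\rho^0_i\ge\epsilon$ then $\tilde\rho_i(t)=(1-t)\rho^0_i+t\rho^1_i\ge(1-t)\epsilon\ge\delta\epsilon$ on $(0,1-\delta)$, so $\delta\epsilon\,\|\dot\lambda_i\|_{L^1(0,1-\delta)}\le\int_0^{1-\delta}(-\dot\lambda_i)\tilde\rho_i\,dt\le\bar C$, which is (i); (ii) is the mirror image, using $\tilde\rho_i(t)\ge t\epsilon\ge\delta\epsilon$ on $(\delta,1)$; and if $\rho^0_i,\rho^1_i\ge\epsilon$ then $\tilde\rho_i(t)\ge\epsilon$ throughout $(0,1)$, giving (iii). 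Thus $\bar C$ serves as the constant $C_1$ in (i)--(iii).

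For (iv), write $\lambda=\tilde\lambda+\bar\lambda\,\mathbf{1}$ with $\bar\lambda:=\tfrac1n\sum_j\lambda_j$; the bound $\int_0^1\|\tilde\lambda\|^2\,dt\le K$ is in hand, so it remains to control $\bar\lambda$ in $L^2(0,1)$. Now $\bar\lambda$ is non-increasing, since $\dot{\bar\lambda}=\tfrac1n\sum_j\dot\lambda_j\le 0$, whence $\|\bar\lambda\|_{L^2(0,1)}\le\max\{|\bar\lambda(0)|,|\bar\lambda(1)|\}$; and by Lemma~\ref{lem:tran} (with $\alpha$ constant), replacing $\lambda$ by $\lambda-c\mathbf{1}$ leaves both \eqref{eq:HJE} and $(\lambda(1),\rho^1)-(\lambda(0),\rho^0)$ unchanged, so the additive constant may be normalized. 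Once it is fixed, the two endpoint values of $\bar\lambda$ are pinned by combining the normalization with (i)--(iii) and with $(\lambda(1),\rho^1)-(\lambda(0),\rho^0)\ge C$: the bound on $\tilde\lambda$ forces each $\lambda_i$ to stay near $\bar\lambda$, while monotonicity of $\bar\lambda$ and the $L^1$-bounds on $\dot\lambda_i$ at the vertices carrying mass of $\rho^0$ or $\rho^1$ limit how far $\bar\lambda$ can travel. I expect this last point to be the only delicate one: since every estimate is invariant under $\lambda\mapsto\lambda-c\mathbf{1}$, the $L^2$-bound on $\lambda$ cannot come from the inequalities alone, and one must first remove the free constant and then verify that (i)--(iii) prevent the spatial average from drifting; the core step and the deduction of (i)--(iii) are, by contrast, short.
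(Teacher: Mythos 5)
Your core step and items (i)--(iii) are correct and essentially follow the paper's route: the paper also inserts the segment $\tilde\rho(t)=(1-t)\rho^0+t\rho^1$ and reduces everything to the bound $-\int_0^1(\dot\lambda,\tilde\rho)\,dt\le C_1$; it obtains that bound by writing $\dot{\tilde\rho}=-\mathrm{div}_G(\tilde m)$ and pairing with $\|\nabla_G\lambda\|_{L^2}\le C_0$, whereas you use $\dot{\tilde\rho}=\rho^1-\rho^0$ together with the $L^2$ bound on $\tilde\lambda$ extracted from the proof of Lemma \ref{lem:gral2bdd}(ii) (which is legitimate, and can also be recovered from the stated gradient bound via $\gamma_P(\bar\rho)>0$ at $\bar\rho=(1/n,\dots,1/n)$ and Lemma \ref{le:Poincare-inequality}). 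Up to that harmless variant, (i)--(iii) match the paper. Your observation that (iv) cannot hold without removing the free additive constant is also right; the paper does exactly this (``adding a constant \dots we assume $\lambda_i(0)=0$'').

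The genuine gap is in your mechanism for (iv). The reduction $\|\bar\lambda\|_{L^2(0,1)}\le\max\{|\bar\lambda(0)|,|\bar\lambda(1)|\}$ forces you to bound the traces of the average at $t=0$ and $t=1$, and these involve \emph{all} components, in particular $\lambda_k$ at vertices with $\rho^0_k=0$ (resp.\ $\rho^1_k=0$). None of your estimates controls such traces: the $L^1$ bounds on $\dot\lambda_k$ near an endpoint are available only when the vertex carries mass of the corresponding endpoint measure, and the bound on $\tilde\lambda$ is only in $L^2$, hence blind to values at $t=0,1$. Indeed a massless component may drop by an amount of order $C_0/\sqrt\eta$ on an interval of length $\eta$ adjacent to $t=1$ while keeping $\|\nabla_G\lambda\|_{L^2}$, $\|\lambda\|_{L^2}$ and the monotonicity and boundary constraints intact, so $|\bar\lambda(1)|$ is simply not a quantity your hypotheses bound, and ``pinning the endpoint values'' cannot be carried out; this is precisely the step you leave as an expectation. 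The correct completion (the paper's) uses interior anchors instead of endpoint traces: normalize $\lambda_i(0)=0$ at a vertex with $n\rho^0_i\ge1$, use (i) to bound $\lambda_i$ pointwise on $(0,\,0.75]$, transfer this to $\|\lambda\|_{L^2(0,\,0.75)}\le C_1$ (the paper uses connectedness of $G$ and $\|\nabla_G\lambda\|_{L^2}\le C_0$; your $\tilde\lambda$-bound would serve the same purpose), pick $t_0\in[0.25,\,0.75]$ where $|\lambda(t_0)|\le 2C_1$, then use (ii) at a vertex with $n\rho^1_j\ge1$ to bound $\lambda_j$ pointwise on $[t_0,1]$, and transfer once more to get $\|\lambda\|_{L^2(0.25,\,1)}\le C_1$. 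Combining the two pieces gives (iv) without ever touching $\bar\lambda(0)$ or $\bar\lambda(1)$.
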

\begin{proof}
Let $\left(\tilde\rho(t),\tilde m(t)\right)$ be the pair defined in \eqref{eq:nov03.2017.7new} and \eqref{eq:nov03.2017.7new2}. We exploit \eqref{eq:nov03.2017.9} to obtain 
\[
C \leq \int_0^1 (\dot \lambda ,\tilde\rho) dt+\|\nabla_G\lambda\|_{L^2(0,1)}\|\tilde m\|_{L^2(0,1)}.
\]
We use Lemma \ref{lem:gral2bdd}  to obtain a constant $C_1$ depending on $C$, $\gamma_P(\rho^0)$, $\gamma_P(\rho^1)$, $w$, $n$ such that
\[
- \int_0^1 (\dot \lambda ,\tilde\rho) dt \leq C_1.
\]
From one line to another, we may increase the value of $C_1$ when necessary. 

(i) By Lemma \ref{lem:gral2bdd}, $\dot \lambda_j \leq 0$ for any $j \in \{1, \cdots, n\}$ and so, $-\dot \lambda_i \tilde\rho_i \leq  -(\dot \lambda ,\tilde\rho).$  Thus, if $\rho_i^0\geq\epsilon>0$ then $\tilde \rho_i\geq \delta \epsilon$ on $t\in [0,1-\delta].$ Hence, 
\begin{equation*}
\int_0^{1-\delta}\delta\epsilon|\dot \lambda_i|dt\leq-\int_0^1\tilde \rho_i(t)\dot \lambda_i dt\leq- \int_0^1 (\dot \lambda ,\tilde\rho) dt \leq C_1. 
\end{equation*}

The proofs  of (ii) and (iii) follow the same lines of argument as that of (i).

(iv) Observe that as $\rho^0, \rho^1\in\mathcal{P}(G)$ there exist $i,j\in V$ such that $n\rho_i^0, n\rho_j^1\geq1$. Adding a constant to $\lambda_i$ if necessary, without loss of generality we assume that $\lambda_i(0)=0$. By (i), we have
\begin{equation}\label{eq:Linfty0delta}
|\lambda_i(t)|=\int_0^t |\dot \lambda_i  |ds \leq C_1
\end{equation}
for $t\in(0, 0.75]$. Since the graph $G$ is connected and $\|\nabla_G\lambda\|_{L^2}\leq C_0$, we obtain, for a bigger constant we still denote as $C_1$, 
\begin{equation}\label{eq:lambda034}
\|\lambda\|_{L^2(0, \; 0.75)}\leq C_1.
\end{equation}
Therefore, the set $\mathcal T$ of $t \in [0.25,\; 0.75]$ such that $|\lambda|\leq 2C_1$ is a set of positive measure. Using (ii), we have 
\begin{equation}\label{eq:lambda034.1}
\int_{t_0}^t |\dot \lambda_j |ds \leq C_1
\end{equation}
if $0.25\leq t_0\leq t \leq 1.$ In particular, taking $t_0 \in \mathcal T$ and increasing the value of $C_1$, \eqref{eq:lambda034.1} implies 
\begin{equation}\label{eq:lambda034.2}
\| \lambda_j \|_{L^\infty(0.25, \; 1)} \leq C_1.
\end{equation} 
As above, we use again the fact that the graph $G$ is connected and $\|\nabla_G\lambda\|_{L^2(0,1)}\leq C_0$ to obtain 
\begin{equation}\label{eq:lambda034.3}
\|\lambda\|_{L^2(0.25, \; 1)}\leq C_1.
\end{equation}
This, together with \eqref{eq:lambda034} proves (iv).
\end{proof}

%%%%%%%%%%%%%%%%%%%%%%%%%%%%%%%%%%%%%%%%%%%%%%%%%%%%%
%								     Theorem                                                                               %
%%%%%%%%%%%%%%%%%%%%%%%%%%%%%%%%%%%%%%%%%%%%%%%%%%%%%
\begin{theorem}\label{th:main-thm1} Let ${\rm B}_*$ be as in Definition \ref{de:bv-loc}. Assume $\gamma_P(\rho^0), \gamma_P(\rho^1)>0$.  
\begin{enumerate}
\item[(i)] We have 
\[
\min_{(\rho, m)} \Bigl\{\mathcal A(\rho, m)\; \Big| \; (\rho, m) \in \mathcal C(\rho^0, \rho^1) \Bigr\}=\sup_{\lambda}\left\{\left(\lambda(1),\rho^1\right)-\left(\lambda(0),\rho^0\right)\; \Big| \; \lambda\in {\rm B}_*\right\}.
\]
\item[(ii)] Then there exists  $ \lambda^* \in {\rm B}_*$ such that 
\[
\left(\lambda^*(1),\rho^1\right)-\left(\lambda^*(0),\rho^0\right)=\sup_{\lambda\in B}
\left\{\left(\lambda(1),\rho^1\right)-\left(\lambda(0),\rho^0\right)\; | \; H (\dot \lambda ,\nabla_G\lambda)=0\right\}.
\]
\end{enumerate}
\end{theorem}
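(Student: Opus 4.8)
The plan is to prove (i) as a pair of opposite inequalities and then, in (ii), to realise the supremum over ${\rm B}_*$ as the limit of a maximising sequence of smooth competitors. Throughout, $(\rho^*,m^*)$ denotes the minimiser of $\mathcal A$ over $\mathcal C(\rho^0,\rho^1)$ furnished by Theorem \ref{th:exist}, which is Lipschitz by Remark \ref{re:exist}.

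For the inequality $\min_{\mathcal C(\rho^0,\rho^1)}\mathcal A\le\sup_{{\rm B}_*}\{(\lambda(1),\rho^1)-(\lambda(0),\rho^0)\}$ I would invoke Theorem \ref{th:dual problem}, which already identifies $\min\mathcal A$ with $\sup\{(\lambda(1),\rho^1)-(\lambda(0),\rho^0)\mid\lambda\in B,\ H(\dot\lambda,\nabla_G\lambda)=0\}$, and observe that every such $\lambda$ lies in ${\rm B}_*$: it belongs to $H^1\subset{\rm BV}_{\rm loc}$, its distributional derivative has no singular part, \eqref{eq:HJEabs} is exactly $H(\dot\lambda,\nabla_G\lambda)=0$, the sign constraint $\dot\lambda^{\rm abs}_i\le0$ is Lemma \ref{lem:gral2bdd}(i), and \eqref{eq:HJEsingular} holds vacuously with $\nu\equiv0$. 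For the reverse inequality, fix $\lambda\in{\rm B}_*$ with $(\lambda(1),\rho^1)-(\lambda(0),\rho^0)>-\infty$ (the other case is trivial); since each $\lambda_i$ is monotone non-increasing this finiteness forces $\lambda_i\in{\rm BV}(0,0.75)$ whenever $\rho^0_i>0$ and $\lambda_i\in{\rm BV}(0.25,1)$ whenever $\rho^1_i>0$. As $\rho^*_i(0)=\rho^0_i$ and $\rho^*_i(1)=\rho^1_i$, the hypotheses on the path in Lemma \ref{le:recession7} are met, and the Hamilton--Jacobi inequalities \eqref{eq:recession8} are contained in the definition of ${\rm B}_*$; hence $\min\mathcal A=\mathcal A(\rho^*,m^*)\ge(\lambda(1),\rho^1)-(\lambda(0),\rho^0)$, and taking the supremum over $\lambda$ proves (i).

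For (ii), combining (i) with Theorem \ref{th:dual problem} gives $\sup_{{\rm B}_*}\{(\lambda(1),\rho^1)-(\lambda(0),\rho^0)\}=\min\mathcal A=\sup\{(\lambda(1),\rho^1)-(\lambda(0),\rho^0)\mid\lambda\in B,\ H(\dot\lambda,\nabla_G\lambda)=0\}$, so it remains to exhibit a maximiser lying in ${\rm B}_*$. I would take a maximising sequence $\{\lambda^k\}\subset B$ with $H(\dot\lambda^k,\nabla_G\lambda^k)=0$, normalise $\lambda^k_{i_0}(0)=0$ for a fixed $i_0$ with $\rho^0_{i_0}>0$ (harmless by Lemma \ref{lem:tran} and since a global additive constant leaves the objective unchanged), and use Lemmas \ref{lem:gral2bdd} and \ref{lem:L2bdd}---here the assumption $\gamma_P(\rho^0),\gamma_P(\rho^1)>0$ is essential---to bound $\lambda^k$ and $\nabla_G\lambda^k$ in $L^2$ and to bound $\lambda^k_i$ in ${\rm BV}(0,0.75)$ when $\rho^0_i>0$ and in ${\rm BV}(0.25,1)$ when $\rho^1_i>0$; together with monotonicity this bounds every $\lambda^k_i$ in ${\rm BV}_{\rm loc}(0,1)$. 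Passing to a subsequence, $\lambda^k\rightharpoonup\lambda^*$ in $L^2$, $\lambda^k\to\lambda^*$ in $L^1_{\rm loc}(0,1)$ and a.e., $\dot\lambda^k\rightharpoonup\dot\lambda^*$ weak-$*$ as measures on compact subsets, and $\nabla_G\lambda^k\rightharpoonup\nabla_G\lambda^*$ in $L^2$, with $\lambda^*_i$ non-increasing and ${\rm BV}$ near the relevant endpoints. For every $\varphi\in C_c(0,1)$, $\varphi\ge0$, and every continuous $\rho\colon(0,1)\to\mathcal P(G)$ I then pass to the limit in
\[
0=\int_0^1 H(\dot\lambda^k,\nabla_G\lambda^k)\,\varphi\,dt\ \ge\ \int_0^1\Bigl[(\rho,\dot\lambda^k)+\tfrac12\|\nabla_G\lambda^k\|^2_\rho\Bigr]\varphi\,dt,
\]
the first term converging because $\varphi\rho$ is an admissible test function, the quadratic term being weakly lower semicontinuous in $\nabla_G\lambda^k$, to obtain $\int_0^1\varphi\,(\rho,d\dot\lambda^*)+\int_0^1\tfrac12\|\nabla_G\lambda^*\|^2_\rho\,\varphi\,dt\le0$; writing $\dot\lambda^*=\dot\lambda^{*,{\rm abs}}\mathcal L^1+\dot\lambda^{*,{\rm sing}}$ and choosing $\nu$ as in Definition \ref{de:bv-loc}, this is exactly the statement $s[\varphi]\le0$, so Lemma \ref{le:recession1} yields $H(\dot\lambda^{*,{\rm abs}},\nabla_G\lambda^*)\le0$ a.e. and $H_0(d\dot\lambda^{*,{\rm sing}}/d\nu)\le0$ $\nu$-a.e.

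To finish (ii) I would first note, from monotonicity and a.e.\ convergence, that $\limsup_k\lambda^k_i(1)\le\lambda^*_i(1)$ for $i$ with $\rho^1_i>0$ and $\liminf_k\lambda^k_i(0)\ge\lambda^*_i(0)$ for $i$ with $\rho^0_i>0$, the $\rho^0_i=0$ and $\rho^1_i=0$ terms vanishing by the convention of Remark \ref{re:recession3}; hence $\sup_{{\rm B}_*}\{\cdots\}=\lim_k[(\lambda^k(1),\rho^1)-(\lambda^k(0),\rho^0)]\le(\lambda^*(1),\rho^1)-(\lambda^*(0),\rho^0)$. If \eqref{eq:HJEabs}--\eqref{eq:HJEsingular} already hold for $\lambda^*$, then $\lambda^*\in{\rm B}_*$ and, combined with the inequality of (i), it attains the supremum. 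Otherwise, mimicking the proofs of Propositions \ref{prop:bar lambda} and \ref{lem:legimply=}, I set $\bar\lambda^*_i:=\lambda^*_i+\alpha$ with $\alpha(t):=-\int_0^t\chi_{\{H(\dot\lambda^{*,{\rm abs}},\nabla_G\lambda^*)<0\}}H(\dot\lambda^{*,{\rm abs}},\nabla_G\lambda^*)\,ds-\int_{(0,t]}\chi_{\{H_0<0\}}H_0(d\dot\lambda^{*,{\rm sing}}/d\nu)\,d\nu$; then $\alpha$ is non-decreasing with $\alpha(0)=0$, the computation of Lemma \ref{lem:tran} forces $H(\dot{\bar\lambda}^{*,{\rm abs}},\nabla_G\bar\lambda^*)=0$ a.e. and $H_0(d\dot{\bar\lambda}^{*,{\rm sing}}/d\nu)=0$ $\nu$-a.e., so $\bar\lambda^*\in{\rm B}_*$, while $(\bar\lambda^*(1),\rho^1)-(\bar\lambda^*(0),\rho^0)=(\lambda^*(1),\rho^1)-(\lambda^*(0),\rho^0)+\alpha(1)\ge\sup_{{\rm B}_*}\{\cdots\}$; since the inequality of (i) forces this quantity to be $\le\min\mathcal A=\sup_{{\rm B}_*}\{\cdots\}$, equality holds (in particular $\alpha\equiv0$), $\lambda^*:=\bar\lambda^*$ is the desired maximiser, and its value is the supremum in (ii) by Theorem \ref{th:dual problem}. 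The main obstacle I anticipate is precisely the second and third paragraphs of (ii): wringing enough compactness out of Lemmas \ref{lem:gral2bdd}--\ref{lem:L2bdd} (especially the ${\rm BV}$ control near the endpoints where $\rho^0_i$ or $\rho^1_i$ is positive, needed so that the boundary traces behave), and then correctly carrying the Hamilton--Jacobi equation through the weak-$*$ limit of $\dot\lambda^k$ so that, after the absolutely continuous/singular decomposition, the limit still obeys the non-smooth relations of Definition \ref{de:bv-loc}---this being exactly what Lemma \ref{le:recession1} (lower semicontinuity via the recession function) is designed to deliver.
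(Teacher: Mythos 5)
Your argument is correct and its skeleton coincides with the paper's: part (i) is obtained exactly as in the paper from Theorem \ref{th:dual problem}, the inclusion $\{\lambda\in B\,:\,H(\dot\lambda,\nabla_G\lambda)=0\}\subset{\rm B}_*$ (via Lemma \ref{lem:gral2bdd}(i)), and Lemma \ref{le:recession7}; part (ii) by a maximizing sequence, the compactness of Lemmas \ref{lem:gral2bdd}--\ref{lem:L2bdd}, and passage of the Hamilton--Jacobi constraint to the ${\rm BV}$ limit through Lemma \ref{le:recession1} (your direct use of weak lower semicontinuity of $\int\varphi\|\nabla_G\lambda\|^2_\rho\,dt$ is the same device as the paper's linearization in $m$ followed by the optimal choice $m_{ij}=g_{ij}(\rho)(\nabla_G\lambda^*)_{ij}$). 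The genuine divergence is the finish of (ii): the paper shows $M=(\lambda^*(1),\rho^1)-(\lambda^*(0),\rho^0)=\mathcal A(\rho^*,m^*)$ and then reads \eqref{eq:HJEabs}--\eqref{eq:HJEsingular} off the equality case of Remark \ref{re:necessary-sufficient}(i) applied to the primal minimizer, whereas you correct $\lambda^*$ by a nondecreasing $\alpha$ (a ${\rm BV}$ version of Propositions \ref{prop:bar lambda} and \ref{lem:legimply=}) and use part (i) to force $\alpha\equiv0$. Your route works, but it leaves implicit two verifications the paper's route gets for free: that $\alpha$ is finite (so $\bar\lambda^*$ is real-valued with traces) and that $\dot{\bar\lambda}^{*,\rm abs}\le0$, $d\dot{\bar\lambda}^{*,\rm sing}/d\nu\le0$ so that $\bar\lambda^*\in{\rm B}_*$; both follow from $H(a,b)\ge\max_i a_i$ together with $H\le0$, which gives $|H(\dot\lambda^{*,\rm abs},\nabla_G\lambda^*)|\le-\dot\lambda^{*,\rm abs}_i$ for every $i$ (likewise for $H_0$), combined with the ${\rm BV}$ bounds near $t=0$ (resp.\ $t=1$) for indices with $\rho^0_i>0$ (resp.\ $\rho^1_j>0$) supplied by Lemma \ref{lem:L2bdd}. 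Two further small points: you test the limiting inequality only against continuous $\rho$, while Lemma \ref{le:recession1} is stated with Borel $\rho\in\mathcal C_G$; constant $\rho$'s running over a countable dense subset of $\mathcal P(G)$ (or a Lusin approximation) close this, and this looseness mirrors the paper's own pairing of the weak-$*$ limit $\dot\lambda^*$ with merely Borel $\rho$. Finally, your one-sided trace estimates $\limsup_k\lambda^k_j(1)\le\lambda^*_j(1)$, $\liminf_k\lambda^k_i(0)\ge\lambda^*_i(0)$ obtained from monotonicity are all that is needed and are in fact more robust than the two-sided trace convergence invoked in \eqref{eq:convergenceBV1}; and in (i), like the paper, you apply Lemma \ref{le:recession7} to a general $\lambda\in{\rm B}_*$ without checking its standing hypothesis $\lambda\in L^2(0,1;\mathbb R^n)$ — the reduction you already perform (finiteness of the objective forces ${\rm BV}$ control precisely at the endpoints carrying mass) is where that issue must be absorbed, so you are no less rigorous than the source here.
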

\begin{proof} (i) Since ${\rm B}_* \cap B$ is a subset of ${\rm B}_*$, Lemma \ref{le:recession7} and Theorem \ref{th:dual problem} imply 
\begin{equation}\label{eq:dec10.2017.1}
\min_{(\rho, m)} \Bigl\{\mathcal A(\rho, m)\; \Big| \; (\rho, m) \in \mathcal C(\rho^0, \rho^1) \Bigr\}=\sup_{\lambda}\left\{\left(\lambda(1),\rho^1\right)-\left(\lambda(0),\rho^0\right)\; \Big| \; \lambda\in {\rm B}_*\right\}.
\end{equation}

(ii) Let $\{\lambda^l\}_l\subset B$ be the maximizing sequence of
\begin{equation*}
\sup_{\lambda\in B}\left\{\left(\lambda(1),\rho^1\right)-\left(\lambda(0),\rho^0\right)\;\; \Big| \;\;H(\dot \lambda ,\nabla_G\lambda)=0\right\}=:M.
\end{equation*}
Without loss of generality we can assume that 
\begin{equation}\label{eq:lower bdd1}
M-1\leq (\lambda^l(1),\rho^1)-(\lambda^l(0),\rho^0).
\end{equation}
Lemma \ref{lem:L2bdd}  ensures $\|\lambda^l\|_{L^2(0,1)}\leq C$ for a constant $C$ independent of $l.$ Therefore, for any $i\in V$ and any integer $k \geq 2$, there exist $s_{i,k}\in [0,\frac{1}{k}]$ and $\tilde s_{i,k}\in [1-\frac{1}{k},1]$ such that
\begin{equation*}
\max\left\{\left|\lambda_i^l(s_{i,k})\right|,\left|\lambda_i^l(\tilde s_{i,k})\right|\right\}\leq k C.
\end{equation*}
Since by Lemma \ref{lem:gral2bdd}, $\dot \lambda_i^l \leq 0$ a.e. in $(0,1)$ for any $i\in V$, we obtain 
$$ 
\Bigl\|\dot \lambda_i^l\Bigr\|_{\rm L^1\bigl(\frac{1}{k}, 1-\frac{1}{k}\bigr)}+ 
\Bigl\|\lambda_i^l\Bigr\|_{\rm L^1\bigl(\frac{1}{k}, 1-\frac{1}{k}\bigr)}=:\Bigl\|\lambda_i^l\Bigr\|_{{\rm BV} \bigl(\frac{1}{k}, 1-\frac{1}{k}\bigr)}\leq 2Ck+(k-2)C. 
$$
Thus, there is an increasing sequence $(n_l) \subset \mathbb N$ and $\lambda \in {\rm BV_{loc}}(0,1)$ such that 
\begin{itemize} 
\item [(i)] $(\lambda^{n_l})_l$ converges weakly to $\lambda^*$ in $L^2(0,1)$, in ${\rm BV_{loc}}(0,1)$ and strongly in $L^2_{loc}(0,1)$. 
\item [(ii)]  for any $k\in\mathbb N$, $\|\lambda_i^*\|_{{\rm BV}\bigl(\frac{1}{k},1-\frac{1}{k}\bigr)}\leq 3Ck$ for any $i\in V$. 
\end{itemize}
We denote the singular part of $-\dot \lambda_i^*$ as $-\dot \lambda^{*{\rm sing}}_i$ and denote the absolutely continuous part as $-\dot \lambda^{*{\rm abs}}_i \mathcal L^1$. Let $I$ be the set of $i \in \{1, \cdots, n\}$ such that $\rho^0_i>0$ and let $J$ be the set of $i \in \{1, \cdots, n\}$ such that $\rho^1_i>0.$ By Lemma \ref{lem:L2bdd} $(\lambda_i^{n_l})_l$ is a bounded sequence in ${\rm BV}(0,\, 0.75)$ for any $i \in I$ and is a bounded sequence in ${\rm BV}(0.25,\, 1)$ for any $i \in J.$ By the convergence of traces of functions of bounded variations, we may assume that 
\begin{equation}\label{eq:convergenceBV1}
\lim_{l\to+\infty} \lambda^{n_l}_i(0)=  \lambda^{*}_i(0) \quad \text{and} \quad \lim_{l\to+\infty} \lambda^{n_l}_j(1)=  \lambda^{*}_j(1) \quad \forall (i, j) \in I \times J.
\end{equation}

Let $\mathcal C_G$ be the set of Borel maps of  $(0,1)$ into $\mathcal P(G)$. Fix  $\varphi\in C_c^0(0,1)$ nonnegative, $\rho \in \mathcal C_G$ and  $m \in L^2(0,1; S^{n \times n})$ such that $\mathcal A(\rho, m)<\infty$. We use Remark \ref{re:Legendre-trans} (ii) to infer 
\begin{eqnarray*}
0&=&\varliminf_{l\to+\infty}\int_0^1\varphi(t)H\left(\frac{d\lambda^{n_l}}{dt},\nabla_G\lambda^{n_l}\right)dt\\
&\geq&\varliminf_{l\to+\infty}\int_0^1\varphi(t)\left( (\dot \lambda^{n_l} ,\rho)-\frac{1}{2} F(\rho, m) +(m, \nabla_G \lambda^{n_l})\right)dt \\
&=&\int_0^1\varphi(t)\left( (\dot \lambda^{*} ,\rho)-\frac{1}{2} F(\rho, m) +(m, \nabla_G \lambda^{*})\right)dt .
\end{eqnarray*}
Setting $m_{ij}=g(\rho_i, \rho_j)  \bigl( \nabla_G \lambda^{*}\bigr)_{ij}$ we use Remark \ref{re:Legendre-trans} (ii) again to conclude that 
\[
0\geq \int_0^1\varphi(t)\left( (\dot \lambda^{*} ,\rho)+\frac{1}{2} \|\nabla_G \lambda^{*}\|^2_\rho\right)dt.
\]
Since $\varphi$ and $\rho$ are arbitrary, we use Lemma \ref{le:recession1} to verify that 
\[
\begin{cases}
H\bigl(\dot \lambda^{*{\rm abs}}, \nabla_G \lambda^{*} \bigr) & \leq 0 \quad \mathcal L^1 \quad \text{a.e. in (0,1)}\\
H_0(\frac{d\dot \lambda^{*{\rm sing}}}{d\nu}) & \leq 0 \quad \nu \quad \text{a.e. in (0,1)}
\end{cases}
\]
where $\nu$ is a non--negative Borel regular measure such that $-\dot \lambda^{{\rm sing}}_i <<\nu$, and $\nu$ and $\mathcal L^1$ are mutually singular.
Thanks to \eqref{eq:convergenceBV1} and since $\{\lambda^n\}_n\subset B$ is a maximizing sequence, we have  
\[
M=\lim_{l\to+\infty} (\lambda^{n_l}(1),\rho^1 )-(\lambda^{n_l}(0),\rho^0 )=  
 \sum_{j\in J} \lambda^{*}_j(1) \rho^1_j-\sum_{i\in I} \lambda^{*}_i(0) \rho^0_i.
\]
In light of the convention in Remark \ref{re:recession3} we infer 
\begin{equation}\label{eq:toward-dual1}
M= (\lambda^{*}(1),\rho^1)-(\lambda^{*}(0) ,\rho^0).
\end{equation} 
Let $(\rho^*,m^*)\in H^1\left(0,1;\mathbb R^{n}\right)\times L^2\left(0,1; S^{n\times n}\right)$ be a minimizer of \eqref{eq:min m}. We combine \eqref{eq:dec10.2017.1} and  \eqref{eq:toward-dual1} to obtain
\[
\mathcal A(\rho^*, m^*)=  (\lambda^{*}(1),\rho^1)-(\lambda^{*}(0) ,\rho^0).
\]
This, together with Remark \ref{re:necessary-sufficient} (i) yields \eqref{eq:HJEabs} and \eqref{eq:HJEsingular}.\end{proof}

\begin{theorem}\label{th:duality-final} Assume $\gamma_P(\rho^0), \gamma_P(\rho^1)>0,$ $(\rho, m) \in \mathcal C(\rho^0, \rho^1)$ and $\mathcal A(\rho, m)<\infty$. A necessary and sufficient condition of $(\rho, m)$ to minimize $\mathcal A$ over   $ \mathcal C(\rho^0, \rho^1)$ is that  there exists $ \lambda \in {\rm BV_ {loc}}(0,1; \mathbb R^n) $ such that $\nabla_G \lambda$ and the distributional derivative $\dot \lambda,$ which is the sum of an absolutely continuous part $\dot \lambda^{\rm abs}\mathcal L^1$ and a singular part $\dot \lambda^{\rm sing}$, satisfy \eqref{main1}-\eqref{main3} 
\end{theorem}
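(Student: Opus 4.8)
The plan is to prove the two implications separately, leaning on the duality already available (Theorem \ref{th:dual problem}, Theorem \ref{th:main-thm1}) and on the pointwise characterization of equality recorded in Remark \ref{re:necessary-sufficient}. For the necessity, suppose $(\rho,m)$ minimizes $\mathcal A$ over $\mathcal C(\rho^0,\rho^1)$. First I would invoke Remark \ref{re:exist} (ii) to obtain that $\rho$ is Lipschitz, and then Theorem \ref{th:main-thm1} (ii) together with Theorem \ref{th:dual problem} to produce $\lambda^*\in{\rm B}_*$ with $(\lambda^*(1),\rho^1)-(\lambda^*(0),\rho^0)=\min_{\mathcal C(\rho^0,\rho^1)}\mathcal A=\mathcal A(\rho,m)$. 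The substantive point is to verify that $(\rho,m)$ and $\lambda^*$ meet the hypotheses of Lemma \ref{le:recession7}: membership in ${\rm B}_*$ forces each $\lambda^*_i$ to be monotone non-increasing (its absolutely continuous part takes values in $(-\infty,0]^n$ and its singular density is $\nu$-a.e. non-positive in every coordinate), $\rho$ is Lipschitz, and the sequence used in the proof of Theorem \ref{th:main-thm1} (ii) to construct $\lambda^*$ is, by Lemma \ref{lem:L2bdd}, bounded in ${\rm BV}(0,\,0.75)$ in each coordinate $i$ with $\rho^0_i>0$ and in ${\rm BV}(0.25,\,1)$ in each coordinate $j$ with $\rho^1_j>0$, so $\lambda^*_i\in{\rm BV}(0,\,0.75)$ for all such $i$ (this is precisely where $\gamma_P(\rho^0),\gamma_P(\rho^1)>0$ enters, through Lemma \ref{lem:gral2bdd}); hence $\lambda^*_i\notin{\rm BV}(0,\,0.75)$ can only occur when $\rho_i(0)=\rho^0_i=0$, and symmetrically at $t=1$. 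With these hypotheses in place, the ``only if'' half of Remark \ref{re:necessary-sufficient} (i) turns the equality $\mathcal A(\rho,m)=(\lambda^*(1),\rho^1)-(\lambda^*(0),\rho^0)$ into \eqref{main1}--\eqref{main3} for $\lambda=\lambda^*$.

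For the sufficiency, suppose $\lambda\in{\rm BV}_{loc}(0,1;\mathbb R^n)$ satisfies \eqref{main1}--\eqref{main3}. Testing the identity $H(\dot\lambda^{\rm abs},\nabla_G\lambda)=0$ against $\rho$ with $\rho_j=\delta_{ij}$ gives $\dot\lambda^{\rm abs}_i\le0$, and $H_0(d\dot\lambda^{\rm sing}/d\nu)=0$ gives $d\dot\lambda^{\rm sing}_i/d\nu\le0$; hence every $\lambda_i$ is monotone non-increasing and $\lambda\in{\rm B}_*$. I would then re-run the identities underlying Lemma \ref{le:recession7} in the equality regime: \eqref{main1} and Remark \ref{re:Legendre-trans} (ii) give $\tfrac12F(\rho,m)=(m,\nabla_G\lambda)-\tfrac12\|\nabla_G\lambda\|_\rho^2$ and, in particular, $\|\nabla_G\lambda\|_\rho^2=F(\rho,m)\in L^1(0,1)$ because $\mathcal A(\rho,m)<\infty$; substituting $\tfrac12\|\nabla_G\lambda\|_\rho^2=-(\dot\lambda^{\rm abs},\rho)$ from \eqref{main2}, integrating over $(0,1)$, integrating by parts on the graph via $\int_0^1(m,\nabla_G\lambda)\,dt=\int_0^1(\lambda,\dot\rho)\,dt$, then integrating by parts in time through Remark \ref{re:recession3} (whose proof uses only $\rho_i\in H^1$, not Lipschitz regularity, after the harmless replacement of ${\rm Lip}(R)/k$ by $\|\dot\rho_i\|_{L^2}\,k^{-1/2}$) while discarding the singular contribution by \eqref{main3}, I arrive at $\mathcal A(\rho,m)=(\lambda(1),\rho^1)-(\lambda(0),\rho^0)$. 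Before that last step I would check that the endpoint pairings make sense under the convention of Remark \ref{re:recession3}: where $\rho^0_i>0$ one has $\rho_i\ge c>0$ near $0$, so \eqref{main2} forces $\dot\lambda^{\rm abs}_i\in L^1$ near $0$ and \eqref{main3} forces $\dot\lambda^{\rm sing}_i\equiv0$ near $0$, whence $\lambda_i$ has a finite trace there (symmetrically at $t=1$), while where $\rho^0_i=0$ the term is read as $0$. Finally, since $\lambda\in{\rm B}_*$, Theorem \ref{th:main-thm1} (i) yields $(\lambda(1),\rho^1)-(\lambda(0),\rho^0)\le\min_{\mathcal C(\rho^0,\rho^1)}\mathcal A\le\mathcal A(\rho,m)$, and comparison with the equality just obtained forces $\mathcal A(\rho,m)=\min_{\mathcal C(\rho^0,\rho^1)}\mathcal A$, so $(\rho,m)$ is a minimizer.

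The duality bookkeeping being already packaged in the cited results, the delicate part is the boundary analysis. In the necessity direction I must certify that the dual maximizer $\lambda^*$ genuinely has ${\rm BV}$-up-to-the-endpoint regularity in exactly those coordinates where $\rho^0$ (resp. $\rho^1$) is positive, so that Lemma \ref{le:recession7} is applicable; in the sufficiency direction I must justify the time integration by parts for $\lambda\in{\rm BV}_{loc}$ paired against $\rho\in H^1$ and give a finite meaning to $\lambda_i(0)\rho^0_i$ and $\lambda_i(1)\rho^1_i$. Both rest on the standing assumption $\gamma_P(\rho^0),\gamma_P(\rho^1)>0$, through Lemmas \ref{lem:gral2bdd} and \ref{lem:L2bdd}.
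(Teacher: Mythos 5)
Your proposal is correct and follows essentially the same route as the paper: necessity via Theorem \ref{th:main-thm1} combined with the equality characterization of Remark \ref{re:necessary-sufficient} (i), and sufficiency by showing \eqref{main1}--\eqref{main3} force $\mathcal A(\rho,m)=\bigl(\lambda(1),\rho^1\bigr)-\bigl(\lambda(0),\rho^0\bigr)$ and then comparing with the duality value, which is exactly how the paper argues. The only difference is that you verify explicitly the hypotheses the paper leaves implicit (monotonicity of $\lambda_i$, ${\rm BV}$ regularity and traces near the endpoints where $\rho^0_i,\rho^1_j>0$, and the $H^1$ versus Lipschitz issue in the time integration by parts), which is a welcome but not substantively different elaboration.
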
 
\proof{} Suppose $(\rho, m)$ to minimize $\mathcal A$ over   $ \mathcal C(\rho^0, \rho^1)$.  By Theorem \ref{th:main-thm1}, there is $\lambda \in {\rm BV_ {loc}}(0,1; \mathbb R^n)$ such that
\[
\left(\lambda(1),\rho^1\right)-\left(\lambda(0),\rho^0\right)=\mathcal A(\rho, m).
\] 
We use  Remark \ref{re:necessary-sufficient} (i) to conclude that \eqref{main1}-\eqref{main3} hold.

Conversely, suppose there is $\lambda \in {\rm BV_ {loc}}(0,1; \mathbb R^n)$ such that \eqref{main1}-\eqref{main3} hold. Relying on Remark \ref{re:necessary-sufficient} (i), we conclude that $(\rho, m)$ to minimize $\mathcal A$ over   $ \mathcal C(\rho^0, \rho^1)$. \endproof
\hfill\break
% \section*{Acknowledgements}
%\noindent The research of WG was supported by NSF grant DMS--17 00 202.
\textbf{Acknowledgements.} The research of WG was supported by NSF grant DMS--17 00 202.

\end{document}